\let\ams@starttoc\@starttoc
\let\@starttoc\ams@starttoc
\patchcmd{\@starttoc}{\makeatletter}{\makeatletter\parskip\z@}{}{}
\newcommand\CA{{\mathscr A}} 
\newcommand\CB{{\mathscr B}}
\newcommand\CC{{\mathscr C}}
\newcommand\CIF{{\mathcal {IF}}} 
\newcommand\CIFM{{\mathcal {IFM}}}
\newcommand\BBK{{\mathbb K}}
\newcommand\BBZ{{\mathbb Z}}
\newcommand {\GAP}{\textsf{GAP}}  
\newcommand {\Singular}{\textsf{SINGULAR}}  
\newcommand {\Sage}{\textsf{SAGE}}  
\newcommand\Ann{{\operatorname{Ann}}}
\newcommand\codim{\operatorname{codim}}
\newcommand\Der{{\operatorname{Der}}}
\newcommand\Fix{{\operatorname{Fix}}}
\newcommand\GL{\operatorname{GL}}
\newcommand\pdeg{\operatorname{pdeg}}
\renewcommand\th{{^{\text{th}}}}
\newcommand{\one}{{1\!\!1}}
\numberwithin{equation}{section}
\theoremstyle{plain}
\newtheorem{lemma}[equation]{Lemma}
\newtheorem{theorem}[equation]{Theorem}
\newtheorem{question}[equation]{Question}
\newtheorem{corollary}[equation]{Corollary}
\newtheorem{proposition}[equation]{Proposition}
\theoremstyle{definition}
\newtheorem{defn}[equation]{Definition}
\newtheorem{remark}[equation]{Remark}
\newtheorem{example}[equation]{Example}
\subjclass[2010]{Primary 20F55, 51F15, 52C35, 14N20, 32S22; Secondary 51D20}
\begin{document}

\title[ Inductive Freeness of Multiarrangements of Reflection Groups ]
{Inductive Freeness of 
Ziegler's Canonical Multiderivations for Reflection Arrangements}

\author[T. Hoge]{Torsten Hoge}
\address
{Institut f\"ur Algebra, Zahlentheorie und Diskrete Mathematik,
Fakult\"at f\"ur Mathematik und Physik,
Leibniz Universit\"at Hannover,
Welfengarten 1,
30167 Hannover, Germany}
\email{hoge@math.uni-hannover.de}

\author[G. R\"ohrle]{Gerhard R\"ohrle}
\address
{Fakult\"at f\"ur Mathematik,
Ruhr-Universit\"at Bochum,
D-44780 Bochum, Germany}
\email{gerhard.roehrle@rub.de}


\keywords{
Multiarrangement,
Ziegler multiplicity,
free arrangement, 
inductively free arrangement, 
reflection arrangement}

\allowdisplaybreaks

\begin{abstract}
Let $\CA$ be a free hyperplane arrangement.
In 1989, Ziegler showed that 
the restriction $\CA''$ of $\CA$
to any hyperplane 
endowed with the natural multiplicity
is then a free multiarrangement.
We initiate
a study of the stronger freeness 
property of inductive freeness for these canonical free
multiarrangements and investigate them
for the underlying class
of reflection arrangements.

More precisely, let $\CA = \CA(W)$ 
be the reflection arrangement
of a complex reflection group $W$.
By work of Terao, each such reflection 
arrangement is free.
Thus so is 
Ziegler's canonical multiplicity
on the restriction  $\CA''$
of $\CA$ to a hyperplane.
We show that the latter 
is inductively free as a multiarrangement
if and only if $\CA''$ itself is
inductively free.
\end{abstract}

\maketitle

\setcounter{tocdepth}{1}
\tableofcontents


\section{Introduction}

The class of free arrangements, respectively free multiarrangments, 
plays a fundamental role in the theory of 
hyperplane arrangements, respectively 
multiarrangements. 
In his seminal work \cite{ziegler:multiarrangements}, Ziegler 
introduced the notion of multiarrangements and initiated the study of their 
freeness.  
We begin by recalling Ziegler's 
fundamental construction from \emph{loc.~cit.} 

\begin{defn}
\label{def:kappa}
Let $\CA$ be an arrangement.
Fix $H_0 \in \CA$ and  consider the restriction 
$\CA''$ with respect to $H_0$.
Define the \emph{canonical multiplicity} 
$\kappa$ on $\CA''$ as follows. For $Y \in \CA''$ set 
\[
\kappa(Y) := |\CA_Y| -1,
\]
i.e., $\kappa(Y)$ is the number of hyperplanes in $\CA \setminus\{H_0\}$
lying above $Y$.
Ziegler showed that freeness of $\CA$ implies 
freeness of $(\CA'', \kappa)$ as follows.
\end{defn}

\begin{theorem}
[{\cite[Thm.~11]{ziegler:multiarrangements}}]
\label{thm:zieglermulti}
Let $\CA$ be a free arrangement with exponents
$\exp \CA = \{1, e_2, \ldots, e_\ell\}$.
Let $H_0 \in \CA$ and consider the restriction 
$\CA''$ with respect to $H_0$.
Then the multiarrangement $(\CA'', \kappa)$ is free with
exponents
$\exp (\CA'', \kappa) = \{e_2, \ldots, e_\ell\}$. 
\end{theorem}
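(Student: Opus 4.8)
The plan is to establish freeness of $(\CA'',\kappa)$ by producing an explicit basis of $D(\CA'',\kappa)$ through restriction of a basis of $D(\CA)$ to $H_0$, and then to invoke a multiarrangement version of Saito's criterion. Choose coordinates $x_1,\dots,x_\ell$ so that $\alpha_{H_0}=x_\ell$ and $H_0=\{x_\ell=0\}$, and write $S=K[x_1,\dots,x_\ell]$ and $S''=S/(x_\ell)=K[x_1,\dots,x_{\ell-1}]$. For $\theta=\sum_{i=1}^{\ell}f_i\partial_i\in\Der(S)$ define the restriction $\rho(\theta):=\sum_{i=1}^{\ell-1}\bar f_i\partial_i\in\Der(S'')$, the bar denoting reduction modulo $x_\ell$; note that $\rho(\theta)(g)=\overline{\theta(g)}$ whenever $g$ is independent of $x_\ell$. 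Since $\CA$ is free with $\exp\CA=\{1,e_2,\dots,e_\ell\}$ and the Euler derivation $\theta_E=\sum_i x_i\partial_i$ always lies in $D(\CA)$ with $\pdeg\theta_E=1$, fix a homogeneous basis $\theta_E=\theta_1,\theta_2,\dots,\theta_\ell$ of $D(\CA)$ with $\pdeg\theta_i=e_i$.

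The first point to appreciate is that $\rho$ does \emph{not} map $D(\CA)$ into $D(\CA'',\kappa)$: already $\rho(\theta_E)$ is the Euler derivation on $H_0$, which violates the required divisibility as soon as some $\kappa(Y)\ge 2$. I therefore pass to the tangent submodule $D_0(\CA):=\{\theta\in D(\CA):\theta(x_\ell)=0\}$. As $\theta_i\in D(\CA)$ forces $\theta_i(x_\ell)\in x_\ell S$, I may write $\theta_i(x_\ell)=b_i x_\ell$ with $b_i$ homogeneous of degree $e_i-1$ and put $\theta_i':=\theta_i-b_i\theta_E$ for $i\ge 2$; each $\theta_i'$ is homogeneous of degree $e_i$ and satisfies $\theta_i'(x_\ell)=0$, and expanding an arbitrary tangent derivation in the basis and applying it to $x_\ell$ shows that $\theta_2',\dots,\theta_\ell'$ is a free basis of $D_0(\CA)$.

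Next I would prove the decisive containment $\rho(D_0(\CA))\subseteq D(\CA'',\kappa)$. Fix $Y\in\CA''$ with defining form $\beta\in S''$. The hyperplanes of $\CA$ through $Y$ other than $H_0$ form a pencil, so after rescaling their defining forms may be written $\alpha_j=a_j x_\ell+\beta$ with pairwise distinct scalars $a_j$, $1\le j\le\kappa(Y)$. For $\theta\in D_0(\CA)$ one then has $\theta(\alpha_j)=a_j\theta(x_\ell)+\theta(\beta)=\theta(\beta)$, so the derivation property $\theta(\alpha_j)\in\alpha_j S$ yields $\alpha_j\mid\theta(\beta)$ for every $j$; the $\alpha_j$ being pairwise coprime, $\prod_j\alpha_j$ divides $\theta(\beta)$ in $S$. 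Reducing modulo $x_\ell$ sends $\prod_j\alpha_j$ to $\beta^{\kappa(Y)}$, whence $\rho(\theta)(\beta)=\overline{\theta(\beta)}\in\beta^{\kappa(Y)}S''$. Thus $\rho(\theta)\in D(\CA'',\kappa)$, and in particular $\rho(\theta_2'),\dots,\rho(\theta_\ell')$ are elements of $D(\CA'',\kappa)$ of polynomial degrees $e_2,\dots,e_\ell$.

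To finish, Saito's criterion for $\CA$ gives $\det(\theta_i(x_j))_{1\le i,j\le\ell}=c\prod_{H\in\CA}\alpha_H$ for some $c\in K^\times$, and the row operations $\theta_i\mapsto\theta_i'$ leave this determinant unchanged. In the basis $\theta_E,\theta_2',\dots,\theta_\ell'$ the last column has $x_\ell$ in its first entry and $0$ elsewhere, so expanding along it gives
\[
\pm\, x_\ell\,\det\bigl(\theta_i'(x_j)\bigr)_{2\le i\le\ell,\,1\le j\le\ell-1}=c\prod_{H\in\CA}\alpha_H=c\,x_\ell\prod_{H\ne H_0}\alpha_H .
\]
Cancelling $x_\ell$, reducing modulo $x_\ell$, and grouping the $\kappa(Y)$ forms $\bar\alpha_H$ that collapse onto each $\alpha_Y$ produces
\[
\det\bigl(\rho(\theta_i')(x_j)\bigr)_{2\le i\le\ell,\,1\le j\le\ell-1}=c'\prod_{Y\in\CA''}\alpha_Y^{\kappa(Y)},\qquad c'\in K^\times .
\]
Since the $\rho(\theta_i')$ lie in $D(\CA'',\kappa)$ and their coefficient determinant is a nonzero scalar multiple of $\prod_Y\alpha_Y^{\kappa(Y)}$, the multiarrangement Saito criterion shows they form a basis, so $(\CA'',\kappa)$ is free with $\exp(\CA'',\kappa)=\{e_2,\dots,e_\ell\}$. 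The main obstacle is precisely the containment in the third paragraph: the naive restriction overshoots (the Euler derivation fails), and extracting the \emph{full} power $\beta^{\kappa(Y)}$ rather than just $\beta$ relies on both the tangency normalization $\theta(x_\ell)=0$ and the pencil description of the forms through $Y$; the degree bookkeeping $\sum_{i\ge 2}e_i=|\CA|-1=\sum_Y\kappa(Y)$ is what makes the final determinant identity sharp enough for Saito's criterion.
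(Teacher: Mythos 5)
Your proof is correct, and it is essentially Ziegler's own argument for this theorem, which the paper does not reproduce but simply cites as \cite[Thm.~11]{ziegler:multiarrangements}: split $D(\CA)=S\theta_E\oplus D_0(\CA)$ along the Euler derivation, restrict the tangent part to $H_0$, extract the divisibility by $\alpha_Y^{\kappa(Y)}$ from the pencil of hyperplanes of $\CA$ lying over each $Y\in\CA''$, and close with Ziegler's version of Saito's criterion (Theorem~\ref{thm:ziegler-saito}). The one point you gloss --- that a homogeneous basis of $D(\CA)$ can be chosen to contain $\theta_E$ --- is standard (cf.~\cite[Prop.~4.27]{orlikterao:arrangements}): expanding $\theta_E$ in any homogeneous basis and applying both sides to $\alpha_{H_0}$ shows $\theta_E$ carries a nonzero scalar coefficient on some degree-one basis element, which may then be exchanged for $\theta_E$.
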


Note that 
the converse of Theorem \ref{thm:zieglermulti} is false.
For let $\CA$ be a non-free $3$-arrangement, 
cf.~\cite[Ex.~4.34]{orlikterao:arrangements}.
Since $\CA''$ is of rank $2$,
$(\CA'', \kappa)$ is free, 
\cite[Cor.~7]{ziegler:multiarrangements}.
Nevertheless, 
Ziegler's construction 
and in particular the question of a converse
of Theorem \ref{thm:zieglermulti}
under suitable additional hypotheses 
play an important role in the 
study of free simple arrangements, 
\cite[Thms.~2.1, 2.2]{yoshinaga:free04},
\cite{yoshinaga:free05},
\cite[Cor.~4.2]{abeyoshinaga},
\cite[Thm.~2]{schulze:free} and
\cite[Cor.~1.35]{yoshinaga:free14}.

Because of the relevance of Ziegler's multiplicity
in the theory of free arrangements, 
it is natural to investigate stronger
freeness properties for $(\CA'', \kappa)$ and
specifically to ask for an analogue of
Theorem \ref{thm:zieglermulti} for 
inductive freeness.

\begin{question}
\label{qu:kappa}
Is it the case that
$(\CA'', \kappa)$ is inductively free
whenever $\CA$ is?
\end{question}

In this paper we initialize the study 
of this question and examine the multiarrangements $(\CA'', \kappa)$ where 
the underlying
class consists of reflection arrangements.
It turns out that for this class the answer is ``yes''.
While inductive freeness is a combinatorial 
property for simple arrangements, i.e.~it
only depends on the underlying intersection lattice, in contrast,
 for multiarrangements it is not combinatorial
in general, \cite[Prop.~10]{ziegler:multiarrangements}.
So one might not expect the implication in Question \ref{qu:kappa}
to hold for general arrangements $\CA$. However, having  
extensively investigated many examples, 
we have been unable to come across a counterexample. 
So it is quite likely that this implication does indeed always hold.

It can be quite cumbersome to decide whether a given 
(multi)arrangement is inductively free, see
for instance 
\cite[\S 5.2]{cuntz:indfree},
\cite[Lem.~4.2]{amendhogeroehrle:indfree},
\cite[Lem.~3.5]{hogeroehrle:indfree}, 
\cite[Thm.~1.1]{conradroehrle:indfree}
and  
\cite[Thm.~1.4]{cuntzroehrleschauenburg:ideal}.  
This might entail an extensive perusal of
a large number of chains of free subarrangements.

If $\CA = \CA(W)$ is a reflection arrangement of 
the complex reflection group $W$, 
then $\CA$ is free, thanks to work of 
Terao \cite{terao:freereflections}.
Thus $(\CA'', \kappa)$ is also free, by 
Theorem \ref{thm:zieglermulti}.

In our main result, we
classify all instances when 
Ziegler's canonical multiplicity 
$(\CA'', \kappa)$ 
is inductively free in case 
the underlying arrangement $\CA$
is a reflection arrangement.
In particular, we answer 
Question \ref{qu:kappa} in the affirmative
for this class of arrangements.

Because of the compatibility of
the product constructions for  
inductive freeness for simple arrangements,
\cite[Prop.~2.10]{hogeroehrle:indfree},
as well as for multiarrangements,
\cite[Thm.~1.4]{hogeroehrleschauenburg:free}
(cf.~Theorem \ref{thm:products}),
the question of inductive freeness 
of $(\CA'', \kappa)$ readily reduces to 
the case when $\CA$ is irreducible.
Thus for a reflection arrangement 
$\CA(W)$, we may assume that 
$W$ is irreducible.

\begin{theorem}
\label{thm:kappa1}
Let $\CA = \CA(W)$ be the reflection arrangement of 
the irreducible complex reflection group $W$.
Let $\CA''$ be the restriction of $\CA$ to 
a hyperplane in $\CA$.
Then $(\CA'', \kappa)$ is inductively free
if and only if one of the following holds:
\begin{itemize}
\item[(i)] $\CA$ is inductively free; or 
\item[(ii)] $\CA$ is non-inductively free of rank at most $4$. 
\end{itemize}
\end{theorem}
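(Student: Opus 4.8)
The plan is to reduce the classification to a finite computation, exploiting the fact that for irreducible complex reflection groups the list of cases is explicit and short. The backbone is the following dichotomy. If $\CA = \CA(W)$ is itself inductively free, then one wants to transfer this property to $(\CA'', \kappa)$; this should follow from a general mechanism relating an inductive chain for the simple arrangement $\CA$ to an inductive chain for Ziegler's canonical multiarrangement on the restriction. I would first isolate and prove such a transfer lemma: given an inductively free $\CA$ with a suitable inductive chain, show that the deletion--restriction triples witnessing inductive freeness of $\CA$ can be pushed down to the hyperplane $H_0$, producing chains of free multiarrangements that terminate at $(\CA'', \kappa)$ with the predicted exponents $\{e_2, \dots, e_\ell\}$ coming from Theorem~\ref{thm:zieglermulti}. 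This establishes the ``if'' direction in case (i).

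For case (ii), the strategy is different: here $\CA$ is free but \emph{not} inductively free, yet of small rank, and one shows directly that $(\CA'', \kappa)$ is nevertheless inductively free. Since $\CA''$ has rank one less than $\CA$, the restriction has rank at most $3$, and the canonical multiarrangement lives on a low-rank space where inductive freeness of multiarrangements is far more tractable. I would enumerate the irreducible $W$ for which $\CA(W)$ is free but non-inductively free of rank $\le 4$ --- by the classification of (inductively) free reflection arrangements this is a short explicit list (the relevant exceptional Shephard--Todd groups, e.g.\ in ranks $3$ and $4$) --- and for each one exhibit an explicit inductive chain for $(\CA'', \kappa)$, computing Ziegler multiplicities $\kappa(Y) = |\CA_Y| - 1$ from the intersection lattice and then verifying freeness of each deletion and restriction in the chain. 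The product-reduction already justifies restricting to irreducible $W$.

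The harder direction is ``only if'': assuming $(\CA'', \kappa)$ is inductively free, one must show $\CA$ is either inductively free or non-inductively free of rank at most $4$. Equivalently, I would prove the contrapositive: if $\CA$ is non-inductively free of rank at least $5$, then $(\CA'', \kappa)$ fails to be inductively free. The natural approach is again via the explicit classification --- the irreducible reflection arrangements that are free but not inductively free of rank $\ge 5$ form a specific finite list --- and for each such $W$ and each orbit of hyperplanes $H_0$ one computes $(\CA'', \kappa)$ and checks that no inductive chain exists. This last verification is the crux: showing that a given multiarrangement is \emph{not} inductively free requires ruling out all possible chains, which cannot be done by exhibiting a single witness.

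\textbf{The main obstacle.} I expect the decisive difficulty to lie precisely in the negative part of the ``only if'' direction. Proving non-inductive-freeness of $(\CA'', \kappa)$ for the high-rank non-inductively-free cases is not a finite check in any naive sense, since one must exclude every conceivable deletion--restriction chain. The likely remedy is to find a computable obstruction --- for instance, an argument that a successful chain for $(\CA'',\kappa)$ would force an inductive chain for $\CA$ (or for a closely related simple arrangement) via the transfer mechanism of case (i) run in reverse, thereby contradicting non-inductive-freeness of $\CA$. Making such a converse transfer rigorous, and controlling how exponents and multiplicities behave under it, is where I anticipate the real work; the remaining low-rank cases in (ii) should then succumb to direct, if tedious, computation aided by the product reduction and by Ziegler's rank-two freeness result \cite[Cor.~7]{ziegler:multiarrangements}.
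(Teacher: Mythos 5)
Your plan has genuine gaps in both directions, and they are not repairable by the routes you sketch.

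First, case (i). Your argument hinges on a ``transfer lemma'' asserting that the deletion--restriction chain witnessing inductive freeness of $\CA$ can be pushed down to $H_0$ to yield inductive freeness of $(\CA'',\kappa)$. No such general mechanism exists: this assertion is precisely Question~\ref{qu:kappa}, which the paper poses as open for arbitrary arrangements, and which one should not expect to follow formally, since inductive freeness of multiarrangements is not a combinatorial property \cite[Prop.~10]{ziegler:multiarrangements}. Pushing a chain for $\CA$ down to $H_0$ does produce a filtration of \emph{free} multiplicities on $\CA''$ (each intermediate arrangement in the chain is free, so Theorem~\ref{thm:zieglermulti} applies), but inductive freeness demands more: by Lemma~\ref{lem:indfreechain} one must also prove that every Euler restriction along the filtration is itself inductively free, and that is exactly where the paper's work lies. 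The paper handles case (i) case by case: the filtration machinery of Section~\ref{sec:filtrations} (Lemma~\ref{lem:multifreetube}, Corollaries~\ref{cor:multifreetuberestrictions} and~\ref{cor:multifreetube}), induction on $\ell$ for $G(r,1,\ell)$ and $D_\ell$ (Proposition~\ref{prop:monomial}, Lemma~\ref{lem:typed}), localization (Theorem~\ref{thm:localmulti}) to deduce $E_6$ and $E_7$ from $E_8$, and computer-assisted induction tables for $E_8$, $G(r,r,4)$ and $G_{29}$ (Lemmas~\ref{lem:coxeterexceptional} and~\ref{lem:grr4}). Note also that $G(r,1,\ell)$ and $G(r,r,\ell)$ are infinite families, so the theorem is not literally ``a finite computation''; arguments uniform in $r$ and $\ell$ are required.

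The more serious flaw is your proposed obstruction for the ``only if'' direction. You suggest that an inductive chain for $(\CA'',\kappa)$ would, run in reverse, force an inductive chain for $\CA$, contradicting non-inductive freeness of $\CA$. Such a reverse transfer is impossible: it is refuted by part (ii) of the very theorem you are proving. For $W = G_{29}$, $G_{31}$, or $G(r,r,4)$ with $r\ge 3$, the multiarrangement $(\CA'',\kappa)$ \emph{is} inductively free (Lemma~\ref{lem:grr4}) while $\CA$ is \emph{not} inductively free (Theorem~\ref{thm:indfree1}); this failure of the reverse implication is also why the equivalence in Corollary~\ref{cor:kappa2} is striking rather than formal. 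The obstruction the paper actually uses is Theorem~\ref{thm:localmulti}: the class $\CIFM$ is closed under localization, so a single localization that fails to be inductively free certifies non-inductive freeness. Thus non-inductive freeness of $(\CA(G(r,r,\ell))'',\kappa)$ for $\ell\ge 5$ is \cite[Prop.~4.1]{hogeroehrleschauenburg:free} (Lemma~\ref{lem:grrl}); for $G_{34}$ one localizes at a parabolic subgroup of type $G(3,3,5)$ and invokes Lemma~\ref{lem:euler}(ii) together with Lemma~\ref{lem:grrl}; and for $G_{33}$ even this does not suffice, and the paper must verify by exhaustive computation that the longest chain of free multiplicities on $\CA''$ has length $22 < 44 = |\kappa|$, so no free filtration can reach $\kappa$ (Lemma~\ref{lem:g33}). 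Without the localization theorem, or an equally concrete certificate of this kind, your plan has no means of ruling out all deletion--restriction chains in the high-rank cases.
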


We record several consequences of Theorem \ref{thm:kappa1}.
Our first observation follows from the theorem along with  
the classification of all 
inductively free restrictions of 
reflection arrangements from 
\cite[Thm.~1.2]{amendhogeroehrle:indfree},
see Theorem \ref{thm:indfree2} below.

\begin{corollary}
\label{cor:kappa2}
Let $\CA = \CA(W)$ be the reflection arrangement of 
the complex reflection group $W$.
Then $(\CA'', \kappa)$ is inductively free
if and only if $\CA''$ itself is inductively free.
\end{corollary}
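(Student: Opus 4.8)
The plan is to derive Corollary \ref{cor:kappa2} as a direct consequence of the main Theorem \ref{thm:kappa1} together with the classification of inductively free restrictions of reflection arrangements recorded in Theorem \ref{thm:indfree2} (from \cite[Thm.~1.2]{amendhogeroehrle:indfree}). Since inductive freeness is compatible with the product construction both for simple arrangements \cite[Prop.~2.10]{hogeroehrle:indfree} and for multiarrangements \cite[Thm.~1.4]{hogeroehrleschauenburg:free}, and since Ziegler's canonical multiplicity $\kappa$ respects the decomposition of $\CA$ into irreducible factors, I would first reduce to the case where $W$ is irreducible, exactly as the paragraph preceding Theorem \ref{thm:kappa1} does. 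This reduction lets me invoke Theorem \ref{thm:kappa1} directly.

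Once restricted to the irreducible case, the strategy is to compare the condition ``$(\CA'', \kappa)$ is inductively free'' characterized by Theorem \ref{thm:kappa1}, namely that (i) $\CA$ is inductively free, or (ii) $\CA$ is non-inductively free of rank at most $4$, against the condition ``$\CA''$ is inductively free'' characterized by Theorem \ref{thm:indfree2}. The forward implication of the main theorem already tells us that $(\CA'', \kappa)$ inductively free forces $\CA$ to satisfy (i) or (ii); I would then need to verify that in each of these situations $\CA''$ is itself inductively free. For the converse, I would show that whenever $\CA''$ is inductively free, the underlying $\CA$ satisfies (i) or (ii), so that Theorem \ref{thm:kappa1} yields inductive freeness of $(\CA'', \kappa)$.

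The heart of the argument is therefore a case-by-case matching of two explicit lists: the reflection groups $W$ for which $\CA(W)$ is inductively free, or non-inductively free of rank at most $4$, on the one hand, and those for which the restriction $\CA(W)''$ is inductively free on the other. I expect the main obstacle to lie precisely in the rank-at-most-$4$ non-inductively-free cases of clause (ii): here $\CA$ fails inductive freeness, yet Theorem \ref{thm:kappa1} asserts that $(\CA'', \kappa)$ is nonetheless inductively free, so I must confirm that for exactly these $W$ the simple restriction $\CA''$ appears on the inductively-free list of Theorem \ref{thm:indfree2}. This amounts to cross-referencing the finitely many exceptional complex reflection groups of small rank, and checking that the two characterizations single out the same groups. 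Since both lists are finite and explicit, no further arrangement-theoretic input is required beyond these two theorems; the corollary follows once the lists are seen to coincide.
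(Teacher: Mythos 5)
Your proposal takes essentially the same route as the paper: the paper obtains Corollary \ref{cor:kappa2} precisely by combining Theorem \ref{thm:kappa1} with the classification of inductively free restrictions in Theorem \ref{thm:indfree2}, after the same reduction to irreducible $W$ via the product compatibility results. The case-matching you describe is exactly the cross-referencing the paper relies on — for the non-inductively free groups of rank at most $4$ (namely $G(r,r,3)$, $G(r,r,4)$ for $r \ge 3$, and $G_{24}$, $G_{27}$, $G_{29}$, $G_{31}$) the hyperplane restrictions fall under clauses (ii) and (iii) of Theorem \ref{thm:indfree2}, while for $G(r,r,\ell)$ with $\ell \ge 5$, $G_{33}$ and $G_{34}$ they do not, so the two characterizations coincide.
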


We mention that Corollary \ref{cor:kappa2} is false in general.
For the failure of the forward implication, see
\cite[Ex.~ 2.16]{hogeroehrle:indfree}. 
There we present an example of an inductively free
arrangement $\CA$ where a particular restriction 
to a hyperplane $\CA''$ fails to be free.
In particular, $\CA''$ is not inductively free.
Nevertheless, since $\CA$ is free, 
so is $(\CA'', \kappa)$, thanks to 
Theorem \ref{thm:zieglermulti}.
One can check that in this instance 
$(\CA'', \kappa)$ is actually inductively free.
Nevertheless, this example is consistent with the assertion   
in Question \ref{qu:kappa}.
Example  \ref{ex:kappa2} shows that 
the reverse implication in Corollary \ref{cor:kappa2}
also fails in general.
In view of these elementary examples, 
the equivalence of Corollary \ref{cor:kappa2}
is rather striking and underlines the very special
nature of reflection arrangements.

In addition to the canonical multiplicity from 
Definition \ref{def:kappa}, we study 
multiplicities which are
concentrated at a single hyperplane. 
These were introduced by
Abe, Terao and Wakefield,
{\cite[\S 5]{abeteraowakefield:euler}.
While in general, $(\CA, \mu)$ need not be free 
for a free hyperplane arrangement $\CA$ and 
an arbitrary multiplicity $\mu$, 
e.g.~see \cite[Ex.~14]{ziegler:multiarrangements},
for these concentrated multiplicities freeness is 
also inherited from the simple arrangement.
It turns out that they are closely related to Ziegler's
canonical multiplicity, 
see Propositions \ref{prop:delta} and 
\ref{prop:reversedelta}.

\begin{defn}
\label{def:delta}
Let $\CA$ be a simple arrangement.
Fix $H_0 \in \CA$ and $m_0 \in \BBZ_{\ge 1}$ and 
define the 
\emph{multiplicity $\delta$ concentrated at $H_0$}
by
\[
\delta(H) := \delta_{H_0,m_0}(H) := 
\begin{cases}
m_0 & \text{ if } H = H_0,\\
1   & \text{ else}.
\end{cases}
\]
\end{defn}

It turns out that in this instance, both
$\CA$ and $(\CA, \delta)$ 
inherit freeness from each other,
as opposed to the general case
indicated above.
We record this in our next result.

\begin{theorem}
\label{thm:delta}
Let $\CA$ be an arrangement.
Fix $H_0 \in \CA$, $m_0 \in \BBZ_{\ge 1}$ and let 
$\delta = \delta_{H_0,m_0}$ be 
the multiplicity concentrated at $H_0$, 
as in Definition \ref{def:delta}.
Then 
$\CA$ is free 
with exponents
$\exp \CA = \{1, e_2, \ldots, e_\ell\}$
if and only if
$(\CA, \delta)$ is free with exponents
$\exp (\CA, \delta) = \{m_0, e_2, \ldots, e_\ell\}$. 
\end{theorem}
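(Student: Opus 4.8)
The plan is to prove both implications by a single tool, Saito's criterion, after choosing coordinates $x_1,\dots,x_\ell$ so that $\alpha_0 = x_1$; a linear change of coordinates only rescales the Saito determinant by a nonzero constant, so this is harmless. Writing $\CA=\{H_0,H_1,\dots,H_n\}$ with linear forms $\alpha_0=x_1,\alpha_1,\dots,\alpha_n$ and setting $Q_0:=\prod_{i\ge 1}\alpha_i$, the defining polynomial of $\CA$ is $Q=x_1Q_0$ and that of $(\CA,\delta)$ is $x_1^{m_0}Q_0$. With this normalization the key observation is that $\delta$ strengthens only the condition at $H_0$, so
\[
D(\CA,\delta)=\{\theta\in D(\CA): x_1^{m_0}\mid \theta(x_1)\}\subseteq D(\CA).
\]
For a list $\xi_1,\dots,\xi_\ell$ of derivations write $M(\xi_1,\dots,\xi_\ell):=\det(\xi_i(x_k))_{i,k}$ for the Saito determinant. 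Throughout I use the Euler derivation $\theta_E=\sum_k x_k\partial_{x_k}\in D(\CA)$, which, since $1\in\exp\CA$, may be taken as a member of a homogeneous basis (reducing to the essential case if needed, which is harmless as $\delta$ only affects the essential direction $x_1$).

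For the forward implication, fix a homogeneous basis $\theta_1=\theta_E,\theta_2,\dots,\theta_\ell$ of $D(\CA)$ with $\pdeg\theta_i=e_i$ and $e_1=1$. As $x_1\mid\theta_i(x_1)$, write $\theta_i(x_1)=x_1 g_i$ with $g_i\in S$ homogeneous and $g_1=1$, and set
\[
\eta_1:=x_1^{m_0-1}\theta_E,\qquad \eta_i:=\theta_i-g_i\theta_E\quad(2\le i\le\ell).
\]
A direct check gives $\eta_1(x_1)=x_1^{m_0}$ and $\eta_i(x_1)=0$ for $i\ge 2$, so all $\eta_i$ lie in $D(\CA,\delta)$, and they are homogeneous of degrees $m_0,e_2,\dots,e_\ell$. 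The passage $(\theta_i)\mapsto(\eta_i)$ is governed by a lower-triangular matrix $P$ with diagonal $(x_1^{m_0-1},1,\dots,1)$, whence $M(\eta_1,\dots,\eta_\ell)=\det P\cdot M(\theta_1,\dots,\theta_\ell)=x_1^{m_0-1}M(\theta_1,\dots,\theta_\ell)$. Since $M(\theta_1,\dots,\theta_\ell)$ equals $Q=x_1Q_0$ up to a nonzero scalar, the result equals $x_1^{m_0}Q_0$ up to a nonzero scalar, which is the defining polynomial of $(\CA,\delta)$; Saito's criterion for multiarrangements then yields that $\{\eta_i\}$ is a basis, so $(\CA,\delta)$ is free with $\exp(\CA,\delta)=\{m_0,e_2,\dots,e_\ell\}$.

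For the reverse implication, let $\eta_1,\dots,\eta_\ell$ be a homogeneous basis of $D(\CA,\delta)$ of degrees $\{m_0,e_2,\dots,e_\ell\}$. Since $x_1^{m_0-1}\theta_E\in D(\CA,\delta)$, expand $x_1^{m_0-1}\theta_E=\sum_i b_i\eta_i$ with $b_i\in S$ homogeneous, $\deg b_i=m_0-\pdeg\eta_i$. Applying both sides to $x_1$ and writing $\eta_i(x_1)=x_1^{m_0}c_i$ gives $\sum_i b_ic_i=1$; a degree count (one needs $\pdeg\eta_i\le m_0$ for $b_i\neq0$ and $\pdeg\eta_i\ge m_0$ for $c_i\neq0$) forces every surviving term to have $\pdeg\eta_i=m_0$, so some such $b_{i_0}$ is a nonzero constant. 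Relabelling so $i_0=1$, multilinearity of $M$ in its first slot gives
\[
b_1\,M(\eta_1,\eta_2,\dots,\eta_\ell)=x_1^{m_0-1}\,M(\theta_E,\eta_2,\dots,\eta_\ell),
\]
so $M(\theta_E,\eta_2,\dots,\eta_\ell)$ equals $Q=x_1Q_0$ up to a nonzero scalar. Saito's criterion then shows $\{\theta_E,\eta_2,\dots,\eta_\ell\}\subseteq D(\CA)$ is a basis of degrees $\{1,e_2,\dots,e_\ell\}$, so $\CA$ is free with $\exp\CA=\{1,e_2,\dots,e_\ell\}$.

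The main obstacle is the reverse direction: one must guarantee that the expansion of $x_1^{m_0-1}\theta_E$ genuinely involves a degree-$m_0$ basis element with nonzero constant coefficient, since otherwise the determinant identity degenerates. This is exactly what $\sum_i b_ic_i=1$ secures, and it is precisely the place where the concentration of $\delta$ at the single hyperplane $H_0$ is exploited. The remaining ingredients—invariance of the Saito determinant under a linear change of coordinates up to a nonzero scalar, and membership of $\theta_E$ in a homogeneous basis—are standard and I would only flag them.
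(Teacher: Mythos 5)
Your proposal is correct, but it reaches the statement by a genuinely different route than the paper. For the forward implication the paper does not argue at all: it cites \cite[Prop.~5.2]{abeteraowakefield:euler}, whose proof runs through the Addition--Deletion Theorem for multiarrangements (Theorem~\ref{thm:add-del}); your explicit construction of the basis $\{x_1^{m_0-1}\theta_E,\ \theta_i-g_i\theta_E\}$, verified by Ziegler's analogue of Saito's criterion (Theorem~\ref{thm:ziegler-saito}), is instead the elementary argument the paper only points to in \cite[Prop.~2.17]{hogeroehrleschauenburg:free}, so here you are self-contained where the paper relies on external machinery. For the reverse implication the paper's Proposition~\ref{prop:reversedelta}(i) follows \cite[Prop.~4.27]{orlikterao:arrangements}: it decomposes $D(\CA,\delta)=S\theta_\delta\oplus\Ann(H_0)$ with $\theta_\delta=\alpha_0^{m_0-1}\theta_E$, uses the graded-module fact \cite[Thm.~A.19]{orlikterao:arrangements} to promote $\theta_\delta$ together with a minimal homogeneous generating set of $\Ann(H_0)$ to a basis, and then swaps $\theta_\delta$ for $\theta_E$ via a degree count and Saito's criterion. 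You avoid the annihilator submodule and the appeal to Thm.~A.19 altogether: expanding $x_1^{m_0-1}\theta_E$ in an arbitrary homogeneous basis of $D(\CA,\delta)$, the relation $\sum_i b_ic_i=1$ combined with degree bookkeeping forces a basis element of degree exactly $m_0$ with nonzero constant coefficient (which, incidentally, re-derives the paper's remark that $m_0$ is automatically an exponent when $(\CA,\delta)$ is free), and the swap becomes a one-line determinant manipulation. Both arguments pivot on the same derivation $\alpha_0^{m_0-1}\theta_E$, but yours is purely determinant-theoretic, needing only the two Saito criteria, whereas the paper's is module-theoretic; what the paper's version buys is that it does not presuppose a chosen homogeneous basis of $D(\CA,\delta)$, while yours buys a uniform, elementary treatment of both directions. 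The two standard facts you flag --- that $\theta_E$ extends to a homogeneous basis of any free, central, non-empty arrangement, and that the Saito determinant changes only by a nonzero scalar under a linear change of coordinates --- are indeed standard and harmless.
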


The forward implication of 
Theorem \ref{thm:delta} is 
\cite[Prop.~5.2]{abeteraowakefield:euler}
and the reverse implication is
Proposition \ref{prop:reversedelta}(i).

As in the case of Ziegler's canonical multiplicity, one
might also speculate whether 
the analogue for inductive freeness holds for
the forward implication of 
Theorem \ref{thm:delta}.

\begin{question}
\label{qu:delta}
Is it the case that 
$(\CA, \delta)$ is inductively free
whenever $\CA$ is?
\end{question}

It turns out that 
Questions \ref{qu:kappa} 
and \ref{qu:delta} are closely related:
the assertion of 
Question \ref{qu:delta} 
follows from that in 
Question \ref{qu:kappa},
see Corollary \ref{cor:indfreedelta}.

Let $\CA = \CA(W)$ be the reflection arrangement of 
the complex reflection group $W$.
Since $\CA$ is free, 
it follows from 
Proposition \ref{prop:delta}(i) 
that the multiarrangement
$(\CA, \delta)$ is also free.
In \cite[Thm.~1.1]{hogeroehrle:indfree},
we classified all inductively free 
reflection arrangements.
In our second main result we 
extend this classification to 
multiarrangements of the form
$(\CA(W), \delta)$. 
This in particular gives an affirmative 
answer to Question \ref{qu:delta}
for the class of reflection arrangements.

\begin{theorem}
\label{thm:Wdelta}
Let $\CA = \CA(W)$ be the reflection arrangement of 
the complex reflection group $W$.
Then $(\CA, \delta)$ is inductively free
if and only if $\CA$ is inductively free.
\end{theorem}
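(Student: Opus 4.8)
The plan is to handle the two implications by quite different means, after reducing to the irreducible case. Since $\delta=\delta_{H_0,m_0}$ is supported at the single hyperplane $H_0$, it is concentrated within one irreducible factor of a product decomposition of $\CA=\CA(W)$; hence, by the compatibility of inductive freeness with products for multiarrangements (Theorem \ref{thm:products}) and for simple arrangements (\cite[Prop.~2.10]{hogeroehrle:indfree}), I may assume $W$ is irreducible. For the forward implication I would combine the tools already in place: if $\CA$ is inductively free, then Ziegler's canonical multiarrangement $(\CA'',\kappa)$ is inductively free by Theorem \ref{thm:kappa1}(i), and feeding this affirmative instance of Question \ref{qu:kappa} into Corollary \ref{cor:indfreedelta} --- the general reduction of Question \ref{qu:delta} to Question \ref{qu:kappa} --- yields that $(\CA,\delta)$ is inductively free. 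As this reduction may call on inductive freeness of Ziegler restrictions for the various restrictions of $\CA$ that occur, I would supply these via the classification of inductively free restrictions of reflection arrangements (Theorem \ref{thm:indfree2}) together with Corollary \ref{cor:kappa2}.

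The reverse implication I expect to establish in full generality, with no appeal to the reflection hypothesis; this should be the inductive-freeness content of Proposition \ref{prop:reversedelta}. First I would set up a double induction on the pair $(|\CA|,m_0)$ and assume $(\CA,\delta_{H_0,m_0})$ is inductively free. The case $m_0=1$ is the simple arrangement $\CA$, so suppose $m_0\ge 2$. Unwinding the definition of inductive freeness for multiarrangements, I fix the top step of a realizing chain, which lowers the multiplicity by one at some $H\in\CA$. If $H=H_0$, the corresponding deletion is $(\CA,\delta_{H_0,m_0-1})$, which is inductively free, and the inner induction on $m_0$ gives that $\CA$ is inductively free.

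The substantive case is $H\neq H_0$. Here the deletion is $(\CA\setminus\{H\},\delta_{H_0,m_0})$ and the restriction is $(\CA^{H},\delta^{*})$, and the key observation is that the Abe--Terao--Wakefield Euler multiplicity $\delta^{*}$ is again concentrated: since $H_0$ meets $H$ in the single rank-two flat $H_0\cap H$ while the localized multiarrangement at every other rank-two flat of $\CA^{H}$ is simple, one finds $\delta^{*}=\delta_{H_0\cap H,\,m^{*}}$ on $\CA^{H}$ for some $m^{*}\ge 1$. Both $\CA\setminus\{H\}$ and $\CA^{H}$ have strictly fewer hyperplanes, so the outer induction shows that each is inductively free as a simple arrangement, and the addition theorem for inductively free simple arrangements then yields that $\CA$ is inductively free. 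The exponent compatibility required to apply that addition theorem I would read off from Theorems \ref{thm:zieglermulti} and \ref{thm:delta}, which convert the multiarrangement exponent relations furnished by the chain into the matching relations among the simple exponents.

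The hard part will be the Euler-multiplicity bookkeeping underpinning Propositions \ref{prop:delta} and \ref{prop:reversedelta}: one must pin down exactly how the Euler multiplicity of a concentrated multiplicity transforms under restriction --- that restricting away from $H_0$ preserves concentration (needed above), and that restricting at $H_0$ reproduces $\kappa$ once $m_0$ is large while staying inductively free for small $m_0$ (needed for the forward direction) --- and carry along the corresponding exponent computations via Theorem \ref{thm:delta}. I expect the reflection-arrangement hypothesis to enter only through the forward implication, via Theorem \ref{thm:kappa1}; it is precisely because the general reverse implication absorbs the rank $\le 4$ exception appearing in Theorem \ref{thm:kappa1} that the final equivalence for $(\CA,\delta)$ comes out cleanly, with no low-rank exceptions.
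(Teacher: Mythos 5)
Your forward implication coincides with the paper's: there, too, the implication is obtained by combining inductive freeness of $\CA$ with inductive freeness of $(\CA'',\kappa)$ (the content of Theorem \ref{thm:kappa1}(i), established case by case in Section \ref{sec:proofs}) and feeding both into Corollary \ref{cor:indfreedelta}; your extra appeal to Theorem \ref{thm:indfree2} and Corollary \ref{cor:kappa2} is superfluous but harmless, since Corollary \ref{cor:indfreedelta} only needs the single Ziegler restriction $(\CA'',\kappa)$.

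Your reverse implication, however, takes a genuinely different route. The paper proves no general converse: it argues through the classification of Theorem \ref{thm:indfree1}, showing for each non-inductively free irreducible $\CA(W)$ that $(\CA,\delta)$ fails to be inductively free, and this needs case-specific input --- the localization argument of Lemma \ref{lem:grrl} for $G(r,r,\ell)$, $\ell\ge 5$, and, in Lemma \ref{lem:grr3}, the fact that every deletion $\CA\setminus\{H\}$ is non-free for the remaining groups (\cite[Cor.~1.3, Cor.~2.4]{hogeroehrle:indfree}) plus a separate free-chain argument for $G_{31}$; only the case $H=H_0$ there runs like your inner induction on $m_0$. Your double induction would instead prove the stronger, classification-free statement that $(\CA,\delta)\in\CIFM$ forces $\CA\in\CIF$ for \emph{arbitrary} arrangements. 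Its crucial ingredient is correct: for $H\ne H_0$ the Euler multiplicity $\delta^*$ on $\CA^H$ is again concentrated at $H\cap H_0$, because the Euler multiplicity of a simple rank-two localization equals $1$ --- this is precisely what makes Theorem \ref{thm:add-del} generalize Theorem \ref{thm:add-del-simple} (do not be misled by Proposition \ref{prop:Euler}(3) as printed, whose hypothesis ``$\mu_0>0$'' should read $\mu_0\ge 2$ as in \cite{abeteraowakefield:euler}). The one step you leave vague does, however, need a different tool than the ones you name: the containment $\exp\CA^H\subseteq\exp(\CA\setminus\{H\})$ cannot safely be extracted from $\exp(\CA^H,\delta^*)\subseteq\exp(\CA\setminus\{H\},\delta')$ by trading $m^*$ and $m_0$ for $1$'s via Theorem \ref{thm:delta}, since if $m_0$ happens to occur among the remaining exponents of the restriction the multiset containment need not survive the swap. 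Instead, note that $(\CA,\delta)$ and $(\CA\setminus\{H\},\delta')$ are free, hence $\CA$ and $\CA\setminus\{H\}$ are free by Proposition \ref{prop:reversedelta}(i), and then Theorems \ref{thm:restriction} and \ref{thm:restriction2} applied with simple multiplicities show that $\CA^H$ is free with exponents the common part of $\exp\CA$ and $\exp(\CA\setminus\{H\})$, which is exactly the containment required for Definition \ref{def:indfree-simple}(ii). With this repair your argument goes through; it buys a cleaner and stronger theorem, makes Lemmas \ref{lem:grrl} and \ref{lem:grr3} unnecessary, and explains why, unlike Theorem \ref{thm:kappa1}, Theorem \ref{thm:Wdelta} admits no low-rank exceptions --- whereas the paper's route avoids all general Euler-multiplicity bookkeeping at the cost of invoking the classification and the non-freeness results above.
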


The paper is organized as follows.
In Section \ref{ssect:free} we recall 
the fundamental results for free arrangements, 
in particular Terao's 
Addition Deletion Theorem \ref{thm:add-del-simple}
and the subsequent
notion of an inductively free arrangement.
The classification of the inductively free
reflection arrangements from \cite[Thm.~1.1]{hogeroehrle:indfree}, 
and the 
classification of the inductively free
restrictions of reflection arrangements 
from \cite[Thm.~1.2]{amendhogeroehrle:indfree}
are restated in Section \ref{ssect:refl}.
Section \ref{ssec:freemulti} is devoted to 
multiarrangements and their freeness. 
Here we present the Addition Deletion theorem 
due to Abe, Terao, and Wakefield, 
\cite[Thm.~0.8]{abeteraowakefield:euler}.
In Section \ref{subsec:ziegler} we treat
Ziegler's canonical and 
concentrated multiplicities.
This is followed by a 
discussion of inductive freeness 
for multiarrangements in Section \ref{ssec:indutive}.
Here we recall results from 
\cite{hogeroehrleschauenburg:free}
which show the compatibility of this notion with 
products and localization for multiarrangements
that are used in the sequel.

Section \ref{sec:filtrations} is central in the paper.
Here we prove 
some general results for free arrangements which show that under 
weak conditions on the exponents of 
a free arrangement $\CA$ and
a given free restriction $\CA^H$, \emph{every} 
multiplicity between the simple and 
the Ziegler multiplicity on $\CA^H$
is  already free, see Corollary \ref{cor:multifreetube}.
This is then applied to reflection arrangements in 
Proposition \ref{prop:multifreetube}.
We then apply 
these methods to the imprimitive reflection groups $G(r,1,\ell-1)$.
Combined with the fact that 
the reflection arrangement $\CA(G(r,1,\ell-1))$ 
itself is inductively free
(cf.~Theorem \ref{thm:indfree1}), 
we show  that $(\CA(G(r,1,\ell))'',\kappa)$ is inductively free
in Proposition \ref{prop:monomial}. 
As a further relevant consequence
for our classification, we derive that
$(\CA(W)'',\kappa)$ is inductively free,
for $W$ of type $F_4$, $H_4$, $G_{31}$ and $G_{32}$ in  
Corollary \ref{cor:indfreerank3}.

In Section \ref{sec:proofs} we present 
our proofs of Theorems \ref{thm:kappa1} and \ref{thm:Wdelta}.
The fact that inductively free multiarrangements 
are closed under localization proves extremely useful in this context, 
see Theorem \ref{thm:localmulti}.
By this means, we can for instance deduce that 
$(\CA(E_6)'',\kappa)$ and $(\CA(E_7)'',\kappa)$ are inductively free
from the fact that $(\CA(E_8)'',\kappa)$ is so, see 
Lemma \ref{lem:coxeterexceptional}. 
The same theorem in turn allows us to derive 
that $(\CA(G_{34})'',\kappa)$ is not inductively free 
from the fact that $(\CA(G(3,3,5))'',\kappa)$ isn't either, see 
Lemma \ref{lem:g33}.

Due to its large rank,  
the proof of inductive freeness for $(\CA(E_8)'',\kappa)$
is the most challenging among all groups of exceptional 
type. It involves delicate arguments of induction, see
Lemma \ref{lem:coxeterexceptional}. 
A crucial ingredient in our proof is the fact 
that on $\CA(E_8)''$
\emph{every} multiplicity between the simple one and Ziegler's multiplicity 
$\kappa$ is free, thus readily providing free filtrations of  
$(\CA(E_8)'',\kappa)$ for our analysis, see
Proposition \ref{prop:multifreetube}.

\section{Recollections and Preliminaries}
\label{sect:prelim}

\subsection{Hyperplane Arrangements}
\label{ssect:hyper}
Let $V = \BBK^\ell$ 
be an $\ell$-dimensional $\BBK$-vector space.
A \emph{hyperplane arrangement} is a pair
$(\CA, V)$, where $\CA$ is a finite collection of hyperplanes in $V$.
Usually, we simply write $\CA$ in place of $(\CA, V)$.
We write $|\CA|$ for the number of hyperplanes in $\CA$.
The empty arrangement in $V$ is denoted by $\Phi_\ell$.

The \emph{lattice} $L(\CA)$ of $\CA$ is the set of subspaces of $V$ of
the form $H_1\cap \dotsm \cap H_i$ where $\{ H_1, \ldots, H_i\}$ is a subset
of $\CA$. 
For $X \in L(\CA)$, we have two associated arrangements, 
firstly
$\CA_X :=\{H \in \CA \mid X \subseteq H\} \subseteq \CA$,
the \emph{localization of $\CA$ at $X$}, 
and secondly, 
the \emph{restriction of $\CA$ to $X$}, $(\CA^X,X)$, where 
$\CA^X := \{ X \cap H \mid H \in \CA \setminus \CA_X\}$.
Note that $V$ belongs to $L(\CA)$
as the intersection of the empty 
collection of hyperplanes and $\CA^V = \CA$. 
The lattice $L(\CA)$ is a partially ordered set by reverse inclusion:
$X \le Y$ provided $Y \subseteq X$ for $X,Y \in L(\CA)$.

If $0 \in H$ for each $H$ in $\CA$, then 
$\CA$ is called \emph{central}.
If $\CA$ is central, then the \emph{center} 
$T_\CA := \cap_{H \in \CA} H$ of $\CA$ is the unique
maximal element in $L(\CA)$  with respect
to the partial order.
We have a \emph{rank} function on $L(\CA)$: $r(X) := \codim_V(X)$.
The \emph{rank} $r := r(\CA)$ of $\CA$ 
is the rank of a maximal element in $L(\CA)$.
The $\ell$-arrangement $\CA$ is \emph{essential} 
provided $r(\CA) = \ell$.
If $\CA$ is central and essential, then $T_\CA =\{0\}$.
Throughout, we only consider central arrangements.

\subsection{Free Hyperplane Arrangements}
\label{ssect:free}
Let $S = S(V^*)$ be the symmetric algebra of the dual space $V^*$ of $V$.
If $x_1, \ldots , x_\ell$ is a basis of $V^*$, then we identify $S$ with 
the polynomial ring $\BBK[x_1, \ldots , x_\ell]$.
Letting $S_p$ denote the $\BBK$-subspace of $S$
consisting of the homogeneous polynomials of degree $p$ (along with $0$),
$S$ is naturally $\BBZ$-graded: $S = \oplus_{p \in \BBZ}S_p$, where
$S_p = 0$ in case $p < 0$.

Let $\Der(S)$ be the $S$-module of algebraic $\BBK$-derivations of $S$.
Using the $\BBZ$-grading on $S$, $\Der(S)$ becomes a graded $S$-module.
For $i = 1, \ldots, \ell$, 
let $D_i := \partial/\partial x_i$ be the usual derivation of $S$.
Then $D_1, \ldots, D_\ell$ is an $S$-basis of $\Der(S)$.
We say that $\theta \in \Der(S)$ is 
\emph{homogeneous of polynomial degree p}
provided 
$\theta = \sum_{i=1}^\ell f_i D_i$, 
where $f_i$ is either $0$ or homogeneous of degree $p$
for each $1 \le i \le \ell$.
In this case we write $\pdeg \theta = p$.

Let $\CA$ be an arrangement in $V$. 
Then for $H \in \CA$ we fix $\alpha_H \in V^*$ with
$H = \ker(\alpha_H)$.
The \emph{defining polynomial} $Q(\CA)$ of $\CA$ is given by 
$Q(\CA) := \prod_{H \in \CA} \alpha_H \in S$.

The \emph{module of $\CA$-derivations} of $\CA$ is 
defined by 
\[
D(\CA) := \{\theta \in \Der(S) \mid \theta(\alpha_H) \in \alpha_H S
\text{ for each } H \in \CA \} .
\]
We say that $\CA$ is \emph{free} if the module of $\CA$-derivations
$D(\CA)$ is a free $S$-module.

With the $\BBZ$-grading of $\Der(S)$, 
also $D(\CA)$ 
becomes a graded $S$-module,
\cite[Prop.~4.10]{orlikterao:arrangements}.
If $\CA$ is a free arrangement, then the $S$-module 
$D(\CA)$ admits a basis of $\ell$ homogeneous derivations, 
say $\theta_1, \ldots, \theta_\ell$, \cite[Prop.~4.18]{orlikterao:arrangements}.
While the $\theta_i$'s are not unique, their polynomial 
degrees $\pdeg \theta_i$ 
are unique (up to ordering). This multiset is the set of 
\emph{exponents} of the free arrangement $\CA$
and is denoted by $\exp \CA$.

The fundamental \emph{Addition Deletion Theorem} 
due to Terao  \cite{terao:freeI} plays a 
crucial role in the study of free arrangements, 
\cite[Thm.~4.51]{orlikterao:arrangements}.

\begin{theorem}
\label{thm:add-del-simple}
Suppose $\CA \neq \Phi_\ell$ and
let $(\CA, \CA', \CA'')$ be a triple of arrangements. Then any 
two of the following statements imply the third:
\begin{itemize}
\item[(i)] $\CA$ is free with $\exp\CA = \{ b_1, \ldots , b_{\ell -1}, b_\ell\}$;
\item[(ii)] $\CA'$ is free with $\exp\CA' = \{ b_1, \ldots , b_{\ell -1}, b_\ell-1\}$;
\item[(iii)] $\CA''$ is free with $\exp\CA'' = \{ b_1, \ldots , b_{\ell -1}\}$.
\end{itemize}
\end{theorem}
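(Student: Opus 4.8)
The plan is to reproduce Terao's original argument, whose engine is a restriction homomorphism combined with Saito's determinantal criterion. Fix the hyperplane $H_0 \in \CA$ defining the triple, so that $\CA' = \CA \setminus \{H_0\}$ and $\CA'' = \CA^{H_0}$, and choose coordinates with $\alpha_0 := \alpha_{H_0} = x_\ell$. Write $\oS = S/(x_\ell)$ for the coordinate ring of $H_0$ and $\overline{g}$ for the image of $g \in S$. First I would construct the restriction map $\rho \colon D(\CA) \to D(\CA'')$: any $\theta = \sum_{i=1}^\ell f_i \partial_i \in D(\CA)$ satisfies $\theta(\alpha_0) = f_\ell \in x_\ell S$, so $\theta$ preserves the ideal $(x_\ell)$ and descends to $\overline{\theta} := \sum_{i < \ell} \overline{f_i}\,\overline{\partial_i} \in \Der(\oS)$; reducing each relation $\theta(\alpha_H) \in \alpha_H S$ modulo $x_\ell$ and using the identity $\overline{\theta(g)} = \overline{\theta}(\overline{g})$ shows $\overline{\theta} \in D(\CA'')$, so $\rho(\theta) := \overline{\theta}$ is well defined.

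Next I would establish the fundamental exact sequence of graded $S$-modules
\[
0 \longrightarrow D(\CA') \xrightarrow{\ \cdot\, x_\ell\ } D(\CA) \xrightarrow{\ \rho\ } D(\CA''),
\]
in which multiplication by $x_\ell = \alpha_0$ raises polynomial degree by one. Here $x_\ell D(\CA') \subseteq D(\CA)$ and injectivity are immediate, while the only substantial point is $\ker\rho = x_\ell D(\CA')$: if $\theta \in D(\CA)$ has $\overline{\theta} = 0$, then every $f_i$ (including $f_\ell$) lies in $(x_\ell)$, so $\theta = x_\ell \theta'$, and comparing $x_\ell\theta'(\alpha_H) = \theta(\alpha_H) \in \alpha_H S$ with $\gcd(x_\ell, \alpha_H) = 1$ for $H \neq H_0$ forces $\theta' \in D(\CA')$.

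With this in hand I would prove the three implications uniformly, combining the exact sequence with Saito's criterion in its degree form: homogeneous $\theta_1, \dots, \theta_\ell \in D(\CA)$ that are $S$-independent and satisfy $\sum_i \pdeg \theta_i = |\CA|$ form a basis of $D(\CA)$. For the implication that (ii) and (iii) imply (i), I would take a homogeneous $\oS$-basis $\overline{\psi}_1, \dots, \overline{\psi}_{\ell-1}$ of $D(\CA'')$ of degrees $b_1, \dots, b_{\ell-1}$, lift them through $\rho$ to $\psi_1, \dots, \psi_{\ell-1} \in D(\CA)$ of the same degrees, and append $x_\ell\theta_\ell$, where $\theta_\ell$ is the basis element of $D(\CA')$ of degree $b_\ell - 1$ supplied by hypothesis (ii). These $\ell$ elements have degree sum $b_1 + \dots + b_\ell = |\CA|$, and a short descent argument (apply $\rho$, use independence of the $\overline{\psi}_j$, divide by $x_\ell$, and invoke torsion-freeness of $\Der(S)$) gives their $S$-independence; Saito's criterion then yields that $\CA$ is free with $\exp\CA = \{b_1, \dots, b_\ell\}$. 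The remaining implications are handled the same way — by restricting, respectively lifting, a suitable sub-basis along $\rho$ and counting degrees against $|\CA'| = |\CA| - 1$ — and the bookkeeping is most transparently packaged by the Hilbert-series identity $\Poin(D(\CA), t) = t\,\Poin(D(\CA'), t) + \Poin(\rho(D(\CA)), t)$ coming from the exact sequence.

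The main obstacle will be the exactness and surjectivity analysis of $\rho$ rather than the algebra of exponents. Proving $\ker\rho = x_\ell D(\CA')$ (the divisibility step above) and, in the implications that must produce $\CA''$, showing that $\rho$ is actually surjective — equivalently that the constructed derivations already exhaust $D(\CA)$, so that $\rho(D(\CA)) = D(\CA'')$ — is where the real work lies; here the coefficientwise inequality $\Poin(\rho(D(\CA)), t) \le \Poin(D(\CA''), t)$ furnished by the exact sequence must be forced to an equality by the dimension count coming from the two assumed free Hilbert series. Once surjectivity and the $S$-independence of the explicit derivations are secured, matching their determinant $\det[\theta_i(x_j)]$ to $Q(\CA)$ up to a nonzero scalar is routine, and the prescribed splitting of the exponents across (i)–(iii) falls out automatically from the degree shift by one in the exact sequence.
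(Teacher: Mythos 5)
The paper itself contains no proof of this statement: it is Terao's classical Addition--Deletion Theorem, recalled with a citation to \cite{terao:freeI} and to \cite[Thm.~4.51]{orlikterao:arrangements}, so your proposal has to be measured against the argument in those sources. Your skeleton is exactly that argument: the restriction map $\rho$, the left-exact sequence $0 \to D(\CA') \xrightarrow{\cdot\,\alpha_0} D(\CA) \xrightarrow{\rho} D(\CA'')$, and Saito's criterion in degree form. The parts of your sketch that you actually carry out are correct: $\rho$ is well defined because $\theta(\alpha_0)\in\alpha_0 S$, and your divisibility argument identifying $\ker\rho = \alpha_0 D(\CA')$ is sound.

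The genuine gap sits precisely at the point you defer, and it cannot be closed the way you propose. You claim the surjectivity of $\rho$ (needed to lift a basis of $D(\CA'')$ in the addition step, and to identify $\rho(D(\CA))$ with $D(\CA'')$ in the restriction and deletion steps) will be ``forced to an equality by the dimension count coming from the two assumed free Hilbert series.'' It will not. In the addition direction (ii)$+$(iii)$\Rightarrow$(i), the exact sequence gives $\Poin(D(\CA),t)=t\,\Poin(D(\CA'),t)+\Poin(\rho(D(\CA)),t)$ and $\Poin(\rho(D(\CA)),t)\le\Poin(D(\CA''),t)$; since $\Poin(D(\CA),t)$ is itself unknown, both facts merely bound it from above, and nothing excludes $\rho(D(\CA))$ being a proper submodule of $D(\CA'')$, so your lifts $\psi_1,\dots,\psi_{\ell-1}$ need not exist. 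In the restriction direction (i)$+$(ii)$\Rightarrow$(iii) the unknown is instead $\Poin(D(\CA''),t)$, so knowing that $\rho(D(\CA))$ is free of the expected Hilbert series does not identify it with $D(\CA'')$. The missing engine of Terao's proof is the polynomial $B:=\overline{Q(\CA')}/Q(\CA'')\in\oS$ of degree $|\CA'|-|\CA''|$ together with two divisibility lemmas proved by rank-two localization (the lemmas preceding Thm.~4.51 in \cite{orlikterao:arrangements}): (a) $\overline{\theta(\alpha_0)}\in B\oS$ for every $\theta\in D(\CA')$, and (b) $B\cdot D(\CA'')\subseteq\rho(D(\CA))$. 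Lemma (a) bounds the ideal $\lambda(D(\CA'))$, where $\lambda(\theta):=\overline{\theta(\alpha_0)}$ and $D(\CA)=\ker\lambda$, from \emph{above} by $B\oS$; only then does your counting argument bite, forcing $\lambda(D(\CA'))=B\oS$ and $\rho(D(\CA))=D(\CA'')$ under (ii)$+$(iii), after which one corrects a homogeneous basis of $D(\CA')$ by multiples of a derivation $\eta$ with $\lambda(\eta)=B$ to land in $D(\CA)$ and closes with Saito's criterion; lemma (b) plays the analogous role for the other two implications. Without $B$ and these lemmas, none of the three implications can be completed along the lines you describe.
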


Theorem \ref{thm:add-del-simple} motivates the notion of 
\emph{inductively free} arrangements, 
cf.~\cite[Def.~4.53]{orlikterao:arrangements}:

\begin{defn}
\label{def:indfree-simple}
The class $\CIF$ of \emph{inductively free} arrangements 
is the smallest class of arrangements subject to
\begin{itemize}
\item[(i)] $\Phi_\ell \in \CIF$ for each $\ell \ge 0$;
\item[(ii)] if there exists a hyperplane $H_0 \in \CA$ such that both
$\CA'$ and $\CA''$ belong to $\CIF$, and $\exp \CA '' \subseteq \exp \CA'$, 
then $\CA$ also belongs to $\CIF$.
\end{itemize}
\end{defn}

\subsection{Reflection Arrangements}
\label{ssect:refl}
The irreducible finite complex reflection groups were 
classified by Shephard and Todd, \cite{shephardtodd}.
Let $W  \subseteq \GL(V)$ be a finite complex reflection group.
For $w \in W$, we write 
$\Fix(w) :=\{ v\in V \mid w v = v\}$ for 
the fixed point subspace of $w$.
For $U \subseteq V$ a subspace, we 
define the \emph{parabolic subgroup}
$W_U$ of $W$ by 
$W_U := \{w \in W \mid U \subseteq \Fix(w)\}$.

The \emph{reflection arrangement} $\CA = \CA(W)$ of $W$ in $V$ is 
the hyperplane arrangement 
consisting of the reflecting hyperplanes of the elements in $W$
acting as reflections on $V$.
By Steinberg's Theorem \cite[Thm.~1.5]{steinberg:invariants},
for $X \in L(\CA)$, 
the parabolic subgroup
$W_X$ is itself a complex reflection group,
generated by the unitary reflections in $W$ that are contained
in $W_X$. 
Thus, we identify the 
reflection arrangement $\CA(W_X)$
of $W_X$ as a subarrangement of $\CA$.

Using the classification and  nomenclature of Shephard and Todd \cite{shephardtodd},
we recall the main classification results
from \cite[Thm.~1.1]{hogeroehrle:indfree} and 
\cite[Thm.~1.2]{amendhogeroehrle:indfree}, respectively.

\begin{theorem}
\label{thm:indfree1}
For a finite complex reflection group  $W$,  
its reflection arrangement $\CA(W)$ is 
inductively free if and only if 
$W$ does not admit an irreducible factor
isomorphic to a monomial group 
$G(r,r,\ell)$ for $r, \ell \ge 3$, 
$G_{24}, G_{27}, G_{29}, G_{31}, G_{33}$, or $G_{34}$.
\end{theorem}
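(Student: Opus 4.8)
The plan is to prove the classification by first reducing to the case of irreducible $W$ and then running through the Shephard--Todd list, treating the infinite series $G(r,p,\ell)$ and the thirty-four exceptional groups separately. Since inductive freeness is compatible with the product construction (\cite[Prop.~2.10]{hogeroehrle:indfree}), $\CA(W)$ is inductively free exactly when the reflection arrangement of each irreducible factor of $W$ is, and a single irreducible factor isomorphic to one of the listed groups already obstructs inductive freeness of the whole. Thus we may assume $W$ is irreducible and invoke the classification of irreducible complex reflection groups.

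For the forward (``good'') direction I would exhibit explicit inductive chains. Rank-$\le 2$ arrangements are inductively free, so assume $\rank \CA \ge 3$. The full monomial arrangement $\CA(G(r,1,\ell))$ is supersolvable, hence inductively free; and for $1 \le p < r$ the reflecting hyperplanes of $G(r,p,\ell)$ coincide with those of $G(r,1,\ell)$ — the coordinate hyperplanes $x_i = 0$ arise from diagonal reflections, which are present precisely when $p<r$, while the hyperplanes $x_i = \zeta^k x_j$ occur for every $p$. This disposes of all $G(r,p,\ell)$ with $p<r$, including $A_\ell = G(1,1,\ell+1)$ and $B_\ell = G(2,1,\ell)$. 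The remaining good cases, namely $D_\ell = G(2,2,\ell)$, the exceptional Coxeter groups, and the good exceptional complex groups, are handled by producing chains via Terao's Addition Deletion Theorem \ref{thm:add-del-simple}: at each stage I peel off a hyperplane $H$ so that $\CA'' = \CA^{H}$ is an already-known inductively free restricted reflection arrangement (using the restriction classification of Theorem \ref{thm:indfree2}) with $\exp \CA'' \subseteq \exp \CA'$, the exponent bookkeeping being computer-assisted.

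For the reverse (``bad'') direction the central mechanism is propagation through parabolic subgroups. A key tool I would establish is that inductive freeness is inherited by localizations: if $X \in L(\CA)$ and $\CA \in \CIF$, then $\CA_X \in \CIF$. Since $\CA_X = \CA(W_X)$ is again the reflection arrangement of a parabolic subgroup, the non-inductively-free reflection arrangements form an up-set under the parabolic order, so it suffices to certify a few minimal bad groups directly and to check that every listed group contains one as a parabolic. For the infinite family I would reduce $G(r,r,\ell)$ with $\ell > 3$ to the rank-$3$ base $G(r,r,3)$, which I treat uniformly in $r$: here every restriction $\CA^{H}$ has rank $2$ and forced exponents $\{1, |\CA^H|-1\}$, and using the known exponents of $\CA(G(r,r,3))$ one verifies that the Addition--Deletion compatibility $\exp \CA^{H} \subseteq \exp \CA$ fails for every hyperplane orbit; by Definition \ref{def:indfree-simple} this rules out the very first step of any inductive chain. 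The exceptional groups $G_{24}, G_{27}, G_{29}, G_{31}, G_{33}, G_{34}$ are then handled the same way, importing non-inductive-freeness upward through the containments among them.

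The main obstacle is this negative, universally quantified statement for the high-rank exceptional groups $G_{31}, G_{33}, G_{34}$ (of ranks $4,5,6$): one must exclude \emph{every} conceivable inductive chain, and a naive enumeration over chains of free subarrangements is infeasible. I would contain the combinatorics in two ways. First, the localization principle lets me reduce all but the genuinely minimal members of the list to smaller cases, so only a handful require a direct certificate. Second, for those minimal cases I convert the infinite search into a finite one via the necessary first-step condition extracted from Theorem \ref{thm:add-del-simple}: if $\CA \in \CIF$ there must exist a hyperplane $H_0$ with $\CA^{H_0} \in \CIF$ and $\exp \CA^{H_0} \subseteq \exp \CA$; using the known exponents of $\CA(W)$ and of its finitely many $W$-orbits of restrictions, I check by computer algebra that this compatibility fails, or that any admissible $H_0$ forces a deletion $\CA \setminus \{H_0\}$ that is not inductively free by recursion. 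Ensuring that this recursion terminates and certifying the exponent incompatibilities across all hyperplane orbits is the technical heart of the argument.
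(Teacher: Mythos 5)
A preliminary remark: the present paper does not prove Theorem \ref{thm:indfree1} at all; it is recalled verbatim from \cite[Thm.~1.1]{hogeroehrle:indfree}, so your proposal has to be measured against the proof given there, whose architecture is visible in this paper through the citations in Lemmas \ref{lem:grr3} and \ref{lem:g33}.

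Your positive direction agrees in outline with that proof: reduction to irreducible $W$ via products, supersolvability of $\CA(G(r,p,\ell)) = \CA(G(r,1,\ell))$ for $p<r$, the Coxeter cases via \cite{cuntz:indfree}, and computer-assisted induction tables for the remaining good groups. Your negative direction, however, takes a genuinely different route. The proof in \cite{hogeroehrle:indfree} shows that for every group on the bad list \emph{except} $G_{31}$ the deletion $\CA\setminus\{H\}$ fails to be free for every hyperplane $H$ (\cite[Cor.~1.3, Cor.~2.4]{hogeroehrle:indfree}), which kills the first step of any induction uniformly in $r$ and $\ell$, and then excludes $G_{31}$ by an exhaustive search showing $\CA(G_{31})$ admits no free chain (\cite[Lem.~3.5]{hogeroehrle:indfree}). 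You instead propose closure of $\CIF$ under localization plus propagation through parabolic subgroups. That tool is true, but it is a later theorem (proved in \cite{hogeroehrleschauenburg:free}; the multiarrangement version is Theorem \ref{thm:localmulti} here), and it is exactly the mechanism this paper uses for its multiarrangement results (Lemmas \ref{lem:grrl} and \ref{lem:g33}); so your plan is closer in spirit to the present paper than to the original proof. What it buys is a clean reduction of the infinite family to $G(r,r,3)$, where your exponent obstruction is correct: $\exp \CA^H = \{1,r\}$ for every $H$ while $\exp \CA = \{1, r+1, 2(r-1)\}$, so no admissible first step exists once $r \ge 3$. What it does not buy: $G_{24}$ and $G_{27}$ (rank $3$, hence all proper parabolics of rank $\le 2$) and $G_{31}$ (whose rank-$3$ parabolics are all on the good list) remain minimal, so the hard direct certification of $G_{31}$ is not avoided, and the parabolic containments you need for $G_{29}$, $G_{33}$, $G_{34}$ must be verified from the tables (the paper records $G(3,3,5) \le G_{34}$ in Lemma \ref{lem:g33}).

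There is one genuine logical flaw: your appeal to Theorem \ref{thm:indfree2} is circular. That theorem is \cite[Thm.~1.2]{amendhogeroehrle:indfree}, whose proof presupposes Theorem \ref{thm:indfree1}; indeed its statement refers to inductive freeness of $\CA$ itself in case (i). Moreover it could not serve your purpose even if granted: after the first deletion, the intermediate arrangements in an inductive chain are no longer reflection arrangements, so the restrictions you must certify at later stages are not restrictions of reflection arrangements and are untouched by any such classification. Certifying precisely these intermediate restrictions is the actual content of the induction tables in \cite{hogeroehrle:indfree} and \cite{cuntz:indfree}. The repair is to drop Theorem \ref{thm:indfree2} and verify the finitely many specific restricted arrangements arising in your chains directly, as those sources do; with that replacement, and with a proof of localization-closedness of $\CIF$ supplied, your argument goes through.
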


\begin{theorem}
\label{thm:indfree2}
Let $W$ be a finite, irreducible, complex 
reflection group with reflection arrangement 
$\CA = \CA(W)$ and let $X \in L(\CA)$. 
The restricted arrangement $\CA^X$ is inductively free 
if and only if one of the following holds:
\begin{itemize}
\item[(i)] 
$\CA$ is inductively free;
\item[(ii)] 
$W = G(r,r,\ell)$ and 
$\CA^X \cong \CA^k_p(r)$, where $p = \dim X$ and $p - 2 \leq k \leq p$; 
\item[(iii)] 
$W$ is one of $G_{24}, G_{27}, G_{29}, G_{31}, G_{33}$, or $G_{34}$ and $X \in L(\CA) \setminus \{V\}$ with  $\dim X \leq 3$.
\end{itemize}
\end{theorem}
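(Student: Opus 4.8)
The plan is to prove both implications by reducing to the Shephard--Todd classification of the irreducible groups $W$ and analysing each type with Terao's Addition--Deletion Theorem~\ref{thm:add-del-simple}, the Orlik--Terao tables of restrictions $\CA(W)^X$, and the inheritance of inductive freeness under products and localizations. One observation immediately streamlines the ``only if'' direction: there is nothing to prove when $\CA=\CA(W)$ is inductively free, since that is exactly alternative~(i). By Theorem~\ref{thm:indfree1} the remaining irreducible groups are precisely $G(r,r,\ell)$ with $r,\ell\ge 3$ together with the six exceptional groups $G_{24},G_{27},G_{29},G_{31},G_{33},G_{34}$, so the whole substance of ``only if'' is confined to these; what must be pinned down there is exactly the family $\CA^k_p(r)$ of alternative~(ii) and the rank bound of alternative~(iii).

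For the ``if'' direction I would dispose of alternative~(i) type by type over the groups whose reflection arrangement is inductively free. Two reductions carry most of the weight: every arrangement of rank at most $2$ is automatically inductively free, and inductive freeness is compatible with the product decomposition of arrangements, so one may restrict attention to irreducible restrictions of rank $\ge 3$. For the monomial families $G(1,1,\ell)$ and $G(r,1,\ell)$, whose reflection arrangements are supersolvable, a direct analysis (confirmed by the restriction tables) shows that every restriction is again a full monomial arrangement $\CA(G(r,1,m))$ up to products, hence inductively free; for the remaining good exceptional groups the restrictions are few and of small rank and are handled by explicit inductive chains read off from the tables.

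The heart of the matter is alternative~(ii), the family $\CA^k_p(r)$ arising as the restrictions of $\CA(G(r,r,\ell))$, where $\CA^0_p(r)=\CA(G(r,r,p))$ and $\CA^p_p(r)=\CA(G(r,1,p))$ interpolate as one adjoins the $k$ coordinate hyperplanes. For the positive range $p-2\le k\le p$ I would argue by induction on the rank $p$, applying Theorem~\ref{thm:add-del-simple} along a chain that returns, after each step, to the family $\CA^{k'}_{p-1}(r)$ (or to a closely related deleted-monomial arrangement) as restriction, with the exponents matched; the induction is anchored at the inductively free arrangement $\CA(G(r,1,p))$ of Theorem~\ref{thm:indfree1}, and the genuinely delicate point is the boundary case $k=p-2$, where the chain must temporarily leave the family. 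For the complementary range $0\le k\le p-3$ the clean argument is by localization: once $p-k\ge 3$ there are three coordinates carrying no coordinate hyperplane, and localizing $\CA^k_p(r)$ at the subspace on which exactly those three coordinates vanish yields an arrangement isomorphic to $\CA(G(r,r,3))$, which is free but, for $r\ge 3$, not inductively free by Theorem~\ref{thm:indfree1}. Since inductive freeness passes to localizations, $\CA^k_p(r)$ cannot be inductively free in this range, and the threshold $k=p-2$ is forced from both sides.

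Finally, for the six exceptional groups in alternative~(iii) the same dichotomy is checked against the restriction tables: restrictions with $\dim X\le 3$ are inductively free (rank $\le 2$ automatically, rank $3$ by a direct chain), whereas each restriction with $\dim X\ge 4$ contains a localization isomorphic to a reflection arrangement that is free but not inductively free, and so fails. I expect the main obstacle to be precisely these ``only if'' assertions: ruling out \emph{every} admissible chain cannot be achieved by exhibiting a single one, so the argument must proceed through structural obstructions such as the localization reductions above, and for the large groups --- above all $G_{34}$ of rank $6$ and the higher-dimensional restrictions --- these reductions must be supplemented by explicit, computer-assisted verification of a manageable list of base cases.
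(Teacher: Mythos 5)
A point of orientation first: the present paper does not prove Theorem \ref{thm:indfree2} at all --- it is quoted from \cite[Thm.~1.2]{amendhogeroehrle:indfree} --- so your proposal can only be measured against the proof in that reference. Measured that way, your outline tracks the same route: reduction via Theorem \ref{thm:indfree1} to $G(r,r,\ell)$ ($r,\ell\ge 3$) and the six exceptional groups, the Orlik--Solomon identification of the restrictions of $\CA(G(r,r,\ell))$ with the arrangements $\CA^k_p(r)$, an addition--deletion induction anchored at $\CA(G(r,1,p))$ for the range $p-2\le k\le p$, a localization obstruction for the range $k\le p-3$ (your localization of $\CA^k_p(r)$ at the vanishing locus of three coordinates carrying no coordinate hyperplane, giving $\CA(G(r,r,3))$, is exactly the right construction), and computer-assisted verification for the exceptional cases.

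There is, however, one genuine gap. The pivot of all your negative arguments is the bare assertion that inductive freeness of \emph{simple} arrangements passes to localizations. This is true, but it is a substantive theorem --- it is precisely Theorem 1.1 of \cite{amendhogeroehrle:indfree}, proved there for exactly this purpose --- and, importantly, it does \emph{not} follow from Theorem \ref{thm:localmulti} of the present paper, which is the only localization statement available here. The reason is that feeding the simple arrangement $(\CA,\one)$ into the multiarrangement machinery does not reproduce simple inductive freeness: by Proposition \ref{prop:Euler}(3), the Euler restriction of $(\CA,\one)$ carries Ziegler's multiplicity $\kappa$ rather than the simple multiplicity, so membership of $(\CA,\one)$ in $\CIFM$ and membership of $\CA$ in $\CIF$ are different conditions --- comparing them is the very subject of this paper. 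You must therefore either prove the simple localization-closure statement or invoke it as a separate, nontrivial ingredient; as written, your argument silently uses a lemma that the toolkit you have access to does not supply. A secondary inaccuracy: your description of the remaining verifications as ``restrictions that are few and of small rank'' underestimates them. Restrictions of $E_6$, $E_7$, $E_8$ have rank up to $7$, and for the rank $\ge 4$ restrictions of $G_{33}$ and $G_{34}$ a clean non-inductively-free localization is not always available; these cases, handled in \cite{amendhogeroehrle:indfree} by large induction tables and exhaustive non-existence checks of free chains, are where most of the actual work resides, not a fringe of ``base cases.''
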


\subsection{Multiarrangements}
\label{ssec:multi}
A \emph{multiarrangement}  is a pair
$(\CA, \mu)$ consisting of a hyperplane arrangement $\CA$ and a 
\emph{multiplicity} function
$\mu : \CA \to \BBZ_{\ge 0}$ associating 
to each hyperplane $H$ in $\CA$ a non-negative integer $\mu(H)$.
Alternately, the multiarrangement $(\CA, \mu)$ can also be thought of as
the multiset of hyperplanes
\[
(\CA, \mu) = \{H^{\mu(H)} \mid H \in \CA\}.
\]

The \emph{order} of the multiarrangement $(\CA, \mu)$ 
is the cardinality 
of the multiset $(\CA, \mu)$; we write 
$|\mu| := |(\CA, \mu)| = \sum_{H \in \CA} \mu(H)$.
For a multiarrangement $(\CA, \mu)$, the underlying 
arrangement $\CA$ is sometimes called the associated 
\emph{simple} arrangement, and so $(\CA, \mu)$ itself is  
simple if and only if $\mu(H) = 1$ for each $H \in \CA$. 

Let $(\CA, \mu)$ be a multiarrangement in $V$ and let 
$X \in L(\CA)$. The 
\emph{localization of $(\CA, \mu)$ at $X$} is defined to be $(\CA_X, \mu_X)$,
where $\mu_X = \mu |_{\CA_X}$.

There is a natural partial order on the 
set of multiplicities for a given simple 
arrangement, as follows.

\begin{defn}
\label{def:order}
For multiplicities $\mu_1$ and $\mu_2$ on an arrangement $\CA$, we define 
$\mu_1 \le \mu_2$ provided $\mu_1(H) \le \mu_2(H)$ for every $H$ in $\CA$.
\end{defn}

\subsection{Freeness of  multiarrangements}
\label{ssec:freemulti}
Following Ziegler \cite{ziegler:multiarrangements},
we extend the notion of freeness to multiarrangements as follows.
The \emph{defining polynomial} $Q(\CA, \mu)$ 
of the multiarrangement $(\CA, \mu)$ is given by 
\[
Q(\CA, \mu) := \prod_{H \in \CA} \alpha_H^{\mu(H)},
\] 
a polynomial of degree $|\mu|$ in $S$.

The \emph{module of $\CA$-derivations} of $(\CA, \mu)$ is 
defined by 
\[
D(\CA, \mu) := \{\theta \in \Der(S) \mid \theta(\alpha_H) \in \alpha_H^{\mu(H)} S 
\text{ for each } H \in \CA\}.
\]
We say that $(\CA, \mu)$ is \emph{free} if 
$D(\CA, \mu)$ is a free $S$-module, 
\cite[Def.~6]{ziegler:multiarrangements}.

As in the case of simple arrangements,
$D(\CA, \mu)$ is a $\BBZ$-graded $S$-module and 
thus, if $(\CA, \mu)$ is free, there is a 
homogeneous basis $\theta_1, \ldots, \theta_\ell$ of $D(\CA, \mu)$.
The multiset of the unique polynomial degrees $\pdeg \theta_i$ 
forms the set of \emph{exponents} of the free multiarrangement $(\CA, \mu)$
and is denoted by $\exp (\CA, \mu)$.

Next we recall Ziegler's analogue of Saito's criterion.

\begin{theorem}
[{\cite[Thm.~8]{ziegler:multiarrangements}}]
\label{thm:ziegler-saito}
Let $\theta_1, \dots, \theta_\ell$ be in $D(\CA, \mu)$. Then 
$\{\theta_1, \dots, \theta_\ell\}$ is a basis of $D(\CA, \mu)$ if and only if
\begin{equation}
\label{eq:ziegler-saito}
\det M(\theta_1, \dots, \theta_\ell)\ \dot=\ Q(\CA, \mu).
\end{equation}
In particular, if $\theta_1, \dots, \theta_\ell$ are all homogeneous, then 
$\{\theta_1, \dots, \theta_\ell\}$ is a basis of $D(\CA, \mu)$ if and only if
$\theta_1, \dots, \theta_\ell$ are independent over $S$ and 
\begin{equation}
\label{eq:exp}
\sum \pdeg \theta_i = \deg Q(\CA, \mu) = |\mu|. 
\end{equation}
\end{theorem}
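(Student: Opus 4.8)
The plan is to prove this as the multiarrangement analogue of Saito's criterion, running the argument that establishes Saito's criterion for simple arrangements while keeping track of the multiplicities $\mu(H)$. The engine of the whole proof is a divisibility lemma: \emph{for any} $\theta_1,\dots,\theta_\ell \in D(\CA,\mu)$, the defining polynomial $Q(\CA,\mu)$ divides $\det M(\theta_1,\dots,\theta_\ell)$ in $S$. To see this, fix $H\in\CA$ and choose a linear coordinate system with $x_1=\alpha_H$; since a linear change of coordinates alters $\det M$ only by a nonzero scalar, divisibility may be checked in these coordinates. The first row of $M$ then consists of the entries $\theta_j(x_1)=\theta_j(\alpha_H)\in\alpha_H^{\mu(H)}S$, so $\alpha_H^{\mu(H)}=x_1^{\mu(H)}$ divides $\det M$. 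As the forms $\alpha_H$ are pairwise non-proportional, the powers $\alpha_H^{\mu(H)}$ are pairwise coprime, and their product $Q(\CA,\mu)$ therefore divides $\det M$.

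For the backward implication, suppose $\det M(\theta_1,\dots,\theta_\ell)\doteq Q(\CA,\mu)$. In particular $\det M\neq 0$, so $\theta_1,\dots,\theta_\ell$ are independent over $S$, hence a basis of $\Der(S)\otimes_S\operatorname{Frac}(S)$. Given any $\theta\in D(\CA,\mu)$, write $\theta=\sum_j g_j\theta_j$ with $g_j\in\operatorname{Frac}(S)$. By Cramer's rule $g_j=\det\widetilde M_j/\det M$, where $\widetilde M_j$ is the coefficient matrix of the tuple obtained by placing $\theta$ in the $j$-th slot. All columns of $\widetilde M_j$ still lie in $D(\CA,\mu)$, so the divisibility lemma gives $Q(\CA,\mu)\mid\det\widetilde M_j$; combined with $\det M\doteq Q(\CA,\mu)$ this forces $g_j\in S$. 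Hence $\theta\in\sum_j S\theta_j$, and $\{\theta_1,\dots,\theta_\ell\}$ is a basis.

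The forward implication is the step I expect to be the main obstacle, and I would handle it by localizing at height-one primes of $S$. Assume $\{\theta_1,\dots,\theta_\ell\}$ is a basis of $D(\CA,\mu)$; the divisibility lemma already gives $Q(\CA,\mu)\mid\det M$, so it remains to show the quotient is a unit. For an irreducible $p\in S$ let $\fp=(p)$, a height-one prime, so that $S_\fp$ is a discrete valuation ring with valuation $v_\fp$. If $p$ is not proportional to any $\alpha_H$, then every condition defining $D(\CA,\mu)$ becomes vacuous over $S_\fp$ and $D(\CA,\mu)_\fp=\Der(S)_\fp$, whence $v_\fp(\det M)=0$. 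If $p\doteq\alpha_{H_0}$ for some $H_0\in\CA$, then all conditions for $H\neq H_0$ become vacuous over $S_\fp$, and writing $x_1=\alpha_{H_0}$ one sees $D(\CA,\mu)_\fp$ is free over $S_\fp$ with basis $x_1^{\mu(H_0)}D_1,D_2,\dots,D_\ell$. Since two bases of a free $S_\fp$-module differ by a matrix in $\GL_\ell(S_\fp)$, the valuation $v_\fp(\det M)$ is independent of the chosen basis and equals $\mu(H_0)$. Thus $\det M$ vanishes to order exactly $\mu(H)$ along each $\alpha_H$ and is coprime to all other irreducibles, giving $\det M\doteq Q(\CA,\mu)$. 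The delicate point is precisely this local computation: one must verify that $D(\CA,\mu)$ is free over each such $S_\fp$ with the stated basis, which is where the multiplicity $\mu(H)$ genuinely enters and where the argument goes beyond the simple case.

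Finally, the homogeneous refinement follows formally from the first part. If the $\theta_i$ are homogeneous and form a basis, then $\det M\doteq Q(\CA,\mu)$ is nonzero, giving independence, and comparing degrees yields $\sum_i\pdeg\theta_i=\deg Q(\CA,\mu)=|\mu|$. Conversely, if the homogeneous $\theta_i$ are independent over $S$ with $\sum_i\pdeg\theta_i=|\mu|$, then $\det M$ is a nonzero homogeneous polynomial of degree $|\mu|=\deg Q(\CA,\mu)$ which, by the divisibility lemma, is a multiple of $Q(\CA,\mu)$; equality of degrees forces $\det M\doteq Q(\CA,\mu)$, and the first part then shows the $\theta_i$ form a basis.
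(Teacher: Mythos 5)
Your proof is correct. One point to flag at the outset: the paper does not prove this statement at all --- Theorem \ref{thm:ziegler-saito} is quoted from Ziegler \cite[Thm.~8]{ziegler:multiarrangements} as background, so there is no internal proof to compare against; your write-up is in effect a reconstruction of Ziegler's result, which itself is the classical Saito-criterion argument carried over to multiplicities. Your backward implication (divisibility lemma plus Cramer's rule) is exactly the standard mechanism, cf.\ \cite[Thm.~4.19]{orlikterao:arrangements}, with $\alpha_H$ replaced by $\alpha_H^{\mu(H)}$ throughout. For the forward implication you localize at height-one primes $\fp=(p)$ of $S$: this is a sound and arguably more conceptual route than the purely matrix-theoretic finish one can also give (namely, testing against explicit elements such as $Q(\CA,\mu)D_j\in D(\CA,\mu)$ and $\bigl(\prod_{K\neq H}\alpha_K^{\mu(K)}\bigr)D_j$ to pin down the order of vanishing of $\det M$ along each $\alpha_H$); in spirit it is close to Saito's original local computation at points lying on exactly one hyperplane. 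Two small points you leave implicit but which are standard and unproblematic: the identification of $D(\CA,\mu)_\fp$ with the submodule of $\Der(S)_\fp$ cut out by the localized conditions uses that localization is exact (present $D(\CA,\mu)$ as the kernel of $\Der(S)\to\bigoplus_H S/\alpha_H^{\mu(H)}S$), and the passage between coordinate systems with $x_1=\alpha_{H_0}$ only changes $\det M$ by a nonzero scalar, so it does not disturb the valuations. With those observations your case analysis gives $v_\fp(\det M)=\mu(H_0)$ for $\fp=(\alpha_{H_0})$ and $v_\fp(\det M)=0$ otherwise, which in the UFD $S$ yields $\det M \doteq Q(\CA,\mu)$; the homogeneous refinement then follows by the degree count exactly as you state.
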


Here the notation $\dot=$ indicates equality
up to a non-zero scalar multiple and 
$M(\theta_1, \dots, \theta_\ell)$ is the coefficient matrix of 
$\{\theta_1, \dots, \theta_\ell\}$, see
\cite[Def.~4.11]{orlikterao:arrangements}.

Note that, owing to \eqref{eq:ziegler-saito}, if $(\CA, \mu)$ is free, 
one can recover 
$\mu$ from the basis $\{\theta_1, \dots, \theta_\ell\}$.

We recall the construction from \cite{abeteraowakefield:euler} for the 
counterpart of Theorem \ref{thm:add-del-simple} in this more general setting.

\begin{defn}
\label{def:Euler}
Let $(\CA, \mu) \ne \Phi_\ell$ be a multiarrangement. Fix $H_0$ in $\CA$.
We define the \emph{deletion}  $(\CA', \mu')$ and \emph{restriction} $(\CA'', \mu^*)$
of $(\CA, \mu)$ with respect to $H_0$ as follows.
If $\mu(H_0) = 1$, then set $\CA' = \CA \setminus \{H_0\}$
and define $\mu'(H) = \mu(H)$ for all $H \in \CA'$.
If $\mu(H_0) > 1$, then set $\CA' = \CA$
and define $\mu'(H_0) = \mu(H_0)-1$ and
$\mu'(H) = \mu(H)$ for all $H \ne H_0$.

Let $\CA'' = \{ H \cap H_0 \mid H \in \CA \setminus \{H_0\}\ \}$.
The \emph{Euler multiplicity} $\mu^*$ of $\CA''$ is defined as follows.
Let $Y \in \CA''$. Since the localization $\CA_Y$ is of rank $2$, the
multiarrangement $(\CA_Y, \mu_Y)$ is free, 
\cite[Cor.~7]{ziegler:multiarrangements}. 
According to 
\cite[Prop.~2.1]{abeteraowakefield:euler},
the module of derivations 
$D(\CA_Y, \mu_Y)$ admits a particular homogeneous basis
$\{\theta_Y, \psi_Y, D_3, \ldots, D_\ell\}$,
such that $\theta_Y \notin \alpha_0 \Der(S)$
and $\psi_Y \in \alpha_0 \Der(S)$,
where $H_0 = \ker \alpha_0$.
Then on $Y$ the Euler multiplicity $\mu^*$ is defined
to be $\mu^*(Y) = \pdeg \theta_Y$.

Often, 
$(\CA, \mu), (\CA', \mu')$ and $(\CA'', \mu^*)$ 
is referred to as the \emph{triple} of 
$(\CA, \mu)$ with respect to $H_0$. 
\end{defn}

We require some core results from 
\cite{abeteraowakefield:euler}.

\begin{theorem}
[{\cite[Thm.~0.4]{abeteraowakefield:euler}}]
\label{thm:restriction}
Suppose that $(\CA, \mu) \ne \Phi_\ell$.
Fix $H_0$ in $\CA$. If both 
$(\CA, \mu)$ and $(\CA', \mu')$ are free, 
then there exists a basis 
$\{\theta_1, \ldots, \theta_\ell \}$ of 
$D(\CA', \mu')$ such that 
$\{\theta_1, \ldots, \alpha_k \theta_k, \ldots, \theta_\ell \}$
is a basis of $D(\CA, \mu)$ 
for some $1 \le k \le \ell$.
\end{theorem}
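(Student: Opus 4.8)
The plan is to analyze the inclusion $D(\CA, \mu) \subseteq D(\CA', \mu')$ homologically and to show that, in suitable homogeneous bases, it is represented by the diagonal matrix $\mathrm{diag}(\alpha_0, 1, \dots, 1)$, where $H_0 = \ker \alpha_0$ (so the ``$\alpha_k$'' in the statement is $\alpha_0$). First I would record two containments. Directly from the definition of deletion one checks $D(\CA,\mu) \subseteq D(\CA',\mu')$ and conversely $\alpha_0 D(\CA',\mu') \subseteq D(\CA,\mu)$; in both the cases $\mu(H_0)=1$ and $\mu(H_0)>1$ the point is that passing from $\mu'$ to $\mu$ raises the required order of vanishing along $H_0$ by exactly one. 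Writing $S'' := S/\alpha_0 S$, every $\theta \in D(\CA',\mu')$ satisfies $\theta(\alpha_0) \in \alpha_0^{\mu(H_0)-1}S$, so $\theta(\alpha_0) = \alpha_0^{\mu(H_0)-1} g_\theta$ with $g_\theta \in S$ unique, and $\rho(\theta) := g_\theta \bmod \alpha_0 \in S''$ defines a homogeneous $S$-linear map $\rho \colon D(\CA',\mu') \to S''$ whose kernel is precisely $D(\CA,\mu)$. Thus $C := \operatorname{im}\rho$ is a graded $S$-module annihilated by $\alpha_0$, i.e.\ an $S''$-module, and we obtain a graded free resolution
\[
0 \to D(\CA,\mu) \xrightarrow{\ \iota\ } D(\CA',\mu') \xrightarrow{\ \rho\ } C \to 0,
\]
where freeness of \emph{both} modules is exactly what makes this a length-one free resolution (this is where both hypotheses are used).

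The heart of the argument is to show that $C$ is free of rank one over $S''$. For freeness, the displayed resolution gives $\operatorname{pd}_S C \le 1$, and since $C$ is a module over the regular ring $S''=S/\alpha_0 S$ with $\alpha_0$ a nonzerodivisor on $S$, the change-of-rings theorem yields $\operatorname{pd}_S C = \operatorname{pd}_{S''} C + 1$; hence $\operatorname{pd}_{S''} C = 0$ and $C$ is $S''$-free. For the rank I would localize at the height-one prime $(\alpha_0)$. Choosing coordinates with $\alpha_0 = x_1$, every $\alpha_H$ with $H \neq H_0$ becomes a unit in $S_{(\alpha_0)}$, so the conditions at the other hyperplanes disappear and
\[
D(\CA,\mu)_{(\alpha_0)} = \bigl\{\theta : \theta(x_1) \in x_1^{\mu(H_0)} S_{(\alpha_0)}\bigr\}, \qquad D(\CA',\mu')_{(\alpha_0)} = \bigl\{\theta : \theta(x_1) \in x_1^{\mu(H_0)-1} S_{(\alpha_0)}\bigr\}.
\]
These are free with bases $x_1^{\mu(H_0)}\partial_{x_1}, \partial_{x_2}, \dots, \partial_{x_\ell}$ and $x_1^{\mu(H_0)-1}\partial_{x_1}, \partial_{x_2}, \dots, \partial_{x_\ell}$, so the localized inclusion is $\mathrm{diag}(x_1,1,\dots,1)$ and $C_{(\alpha_0)}$ has rank one. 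Being $S''$-free of generic rank one, $C \cong S''(-d)$ for a single shift $d$.

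Finally I would extract the bases. Knowing $C \cong S''(-d)$, the minimal graded free resolution of $C$ is $0 \to S(-d-1)\xrightarrow{\ \alpha_0\ } S(-d) \to C \to 0$, and by uniqueness of minimal graded free resolutions over $S$ the displayed resolution is isomorphic to the direct sum of this one and a trivial complex $0 \to G \xrightarrow{\ \mathrm{id}\ } G \to 0$ with $G$ free of rank $\ell-1$. Reading off the decomposition, $D(\CA',\mu') \cong S(-d)\oplus G$ and $D(\CA,\mu) \cong S(-d-1)\oplus G$ with $\iota = \alpha_0 \oplus \mathrm{id}_G$; taking $\theta_k$ a generator of the $S(-d)$-summand and $\theta_i$ ($i\neq k$) a homogeneous basis of $G$ gives a basis $\{\theta_1,\dots,\theta_\ell\}$ of $D(\CA',\mu')$ for which $\{\theta_1,\dots,\alpha_0\theta_k,\dots,\theta_\ell\}$ is a basis of $D(\CA,\mu)$, as required. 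The main obstacle is the rank-one claim for $C$: its $S''$-freeness is formal, but forcing the rank to be exactly one — and hence a single factor $\alpha_0$ — is the crux, and it is precisely here that the local geometry near $H_0$ (only one hyperplane survives localization) is decisive; as a cross-check the resulting bases can be verified directly against Ziegler's analogue of Saito's criterion, Theorem \ref{thm:ziegler-saito}.
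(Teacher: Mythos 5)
Your proof is correct, but one remark is needed up front: this paper does not prove Theorem \ref{thm:restriction} at all --- it is quoted from \cite[Thm.~0.4]{abeteraowakefield:euler} --- so the only meaningful comparison is with the argument in that source, and yours is genuinely different from it. The original proof is elementary: expressing a homogeneous basis of $D(\CA,\mu)$ in terms of one of $D(\CA',\mu')$ gives a transition matrix $B$ over $S$, and Ziegler's analogue of Saito's criterion (Theorem \ref{thm:ziegler-saito}) forces $\det B \doteq Q(\CA,\mu)/Q(\CA',\mu') = \alpha_0$; since $\det B$ is nonzero of degree one, $B$ must contain a nonzero scalar entry (otherwise every term in the determinant expansion has degree at least $\ell$), and Gaussian elimination with such entries as pivots --- each step a change of homogeneous basis in one of the two free modules --- reduces $B$ inductively to $\operatorname{diag}(1,\ldots,1,\alpha_0)$, which is exactly the assertion. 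You instead identify the cokernel $C$ homologically: the exact sequence gives $\operatorname{pd}_S C \le 1$, Rees's change-of-rings theorem gives $S''$-freeness (this application is legitimate: $S''$ is regular, so $\operatorname{pd}_{S''}C < \infty$, and $C \neq 0$ is supplied by your localization computation, so there is no circularity), localization at the height-one prime $(\alpha_0)$ pins the rank to one, and uniqueness of minimal graded free resolutions splits off the trivial complex and yields the simultaneous bases. I checked the supporting steps: $\ker \rho = D(\CA,\mu)$ is precisely the multiplicity condition at $H_0$; the localized modules have the bases you claim (the conditions at $H \neq H_0$ become vacuous as those $\alpha_H$ are units in $S_{(\alpha_0)}$, while the condition at $H_0$ descends because $\alpha_0^{m}S$ is $(\alpha_0)$-primary); and the decomposition of a finite graded free resolution as minimal-plus-trivial is standard. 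Two cosmetic points, neither a gap: $\rho$ is homogeneous only up to a shift by $\mu(H_0)-1$, and the inclusion is $\alpha_0 \oplus \psi$ for some isomorphism $\psi$ of $G$ rather than $\alpha_0 \oplus \mathrm{id}_G$, which is absorbed into the choice of basis. In sum, your route is more conceptual --- it explains the single factor $\alpha_0$ as the intrinsic fact that the cokernel is free of rank one over $S''$ --- at the cost of machinery (Rees's theorem, minimal resolutions, the localization description of $D(\CA,\mu)$) that the determinant argument of \cite{abeteraowakefield:euler} avoids entirely.
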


Fix $H_0 = \ker \alpha_0$ in $\CA$ and let $\CA''$ be the restriction with respect to $H_0$.
Consider the projection $S \to \overline S  := S/\alpha_0 S$, 
$f \mapsto \overline f$.
Following \cite{abeteraowakefield:euler}, there is 
a canonical map $D(\CA,\mu) \to D(\CA'', \mu^*)$,  
$\theta \mapsto \overline \theta$, where 
$\overline \theta(\overline f) := \overline {\theta(f)}$, for 
$\overline f \in \overline S$.

\begin{theorem}
[{\cite[Thm.~0.6]{abeteraowakefield:euler}}]
\label{thm:restriction2}
Suppose that $(\CA, \mu) \ne \Phi_\ell$.
Fix $H_0$ in $\CA$. 
Suppose that both 
$(\CA, \mu)$ and $(\CA', \mu')$ are free.
Let 
$\{\theta_1, \ldots, \theta_\ell \}$ be a basis of 
$D(\CA', \mu')$ 
as in Theorem \ref{thm:restriction}.
Then 
$\{\overline \theta_1, \ldots, \overline \theta_{k-1}, \overline \theta_{k+1}, \ldots, \overline \theta_\ell \}$
is a basis of $D(\CA'', \mu^*)$. 
\end{theorem}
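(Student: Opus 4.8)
The plan is to reduce the whole statement, via Ziegler's analogue of Saito's criterion (Theorem~\ref{thm:ziegler-saito}), to a single determinant identity on the restriction. First I would fix, using Theorem~\ref{thm:restriction}, a homogeneous basis $\theta_1,\ldots,\theta_\ell$ of $D(\CA',\mu')$ with $\pdeg\theta_i=b_i$, such that, writing $\alpha_0$ for the linear form defining $H_0$ (the factor denoted $\alpha_k$ in Theorem~\ref{thm:restriction}), the tuple $\theta_1,\ldots,\theta_{k-1},\alpha_0\theta_k,\theta_{k+1},\ldots,\theta_\ell$ is a basis of $D(\CA,\mu)$. In particular $\theta_j\in D(\CA,\mu)$ for $j\ne k$, so the canonical map $D(\CA,\mu)\to D(\CA'',\mu^*)$ recalled before the theorem supplies $\overline\theta_j\in D(\CA'',\mu^*)$ for each $j\ne k$; these are the $\ell-1$ derivations to be shown to form a basis.

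Next I would choose coordinates $x_1,\ldots,x_\ell$ with $\alpha_0=x_\ell$, so that $\overline S=S/\alpha_0 S=\BBK[\overline x_1,\ldots,\overline x_{\ell-1}]$ and the restriction map simply drops the $D_\ell=\partial/\partial x_\ell$ component and reduces the remaining coefficients modulo $\alpha_0$. Writing $\theta_j=\sum_{i=1}^\ell f_{ij}D_i$ and letting $N=(f_{ij})$ be the coefficient matrix of the $D(\CA',\mu')$-basis, Theorem~\ref{thm:ziegler-saito} gives $\det N \ \dot=\ Q(\CA',\mu')$. For $j\ne k$ the restricted derivation is $\overline\theta_j=\sum_{i=1}^{\ell-1}\overline{f_{ij}}\,\overline D_i$, so the coefficient matrix of $\{\overline\theta_j\}_{j\ne k}$ over $\overline S$ is the $(\ell-1)\times(\ell-1)$ matrix $\overline M'$ obtained from $N$ by deleting row $\ell$ and column $k$ and reducing modulo $\alpha_0$. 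By the full determinant form~\eqref{eq:ziegler-saito} of Theorem~\ref{thm:ziegler-saito} applied over $\overline S$ to $\CA''$, the entire theorem follows once
\[
\det \overline M' \ \dot=\ Q(\CA'',\mu^*)
\]
is established. This identity does everything at once: a nonvanishing determinant forces the $\overline\theta_j$ to be independent, hence nonzero, whence $\pdeg\overline\theta_j=b_j$, and matching it with $Q(\CA'',\mu^*)$ simultaneously supplies the exponent bookkeeping $\sum_{j\ne k}b_j=|\mu^*|$.

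The hard part will be this local identification of $\det\overline M'$. Since $\overline M'$ is, up to sign, the reduction modulo $\alpha_0$ of the $(\ell,k)$-minor of $N$, and $\det N\ \dot=\ Q(\CA',\mu')$, the real task is to compute, for each $Y\in\CA''$, the order of vanishing of $\det\overline M'$ along the form $\alpha_Y$ defining $Y$ in $H_0$, and to show it equals $\mu^*(Y)$. This is where the definition of the Euler multiplicity enters. Localizing at $Y$, the rank-two multiarrangement $(\CA_Y,\mu_Y)$ carries the homogeneous basis $\{\theta_Y,\psi_Y,D_3,\ldots,D_\ell\}$ of Definition~\ref{def:Euler}, with $\theta_Y\notin\alpha_0\Der(S)$, $\psi_Y\in\alpha_0\Der(S)$, and $\mu^*(Y)=\pdeg\theta_Y$. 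The key point to verify is that the component $\psi_Y$ lying in $\alpha_0\Der(S)$ is precisely what restriction annihilates: passing to $\overline S$ kills its contribution to the minor, so that the order of vanishing of $\det\overline M'$ along $\alpha_Y$ is governed by $\theta_Y$ alone and equals $\pdeg\theta_Y=\mu^*(Y)$. Carrying out this comparison over all $Y\in\CA''$ and tracking degrees then yields $\det\overline M'\ \dot=\ \prod_{Y\in\CA''}\alpha_Y^{\mu^*(Y)}=Q(\CA'',\mu^*)$, completing the argument; the delicate rank-two local computation identifying the lost order of vanishing with $\pdeg\theta_Y$ is the main obstacle.
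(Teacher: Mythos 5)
First, a point of context: the paper does not prove this statement at all --- it is quoted from \cite[Thm.~0.6]{abeteraowakefield:euler} as background --- so your proposal can only be judged against what a complete proof requires. Your reduction is sound as far as it goes: granting the well-definedness of the canonical map $D(\CA,\mu)\to D(\CA'',\mu^*)$ (which the paper also imports from \cite{abeteraowakefield:euler}), Theorem \ref{thm:ziegler-saito} applied over $\overline S$ does reduce everything to the identity $\det \overline M' \doteq Q(\CA'',\mu^*)$. The gaps lie in your plan for proving that identity, and they sit exactly where the content of the theorem is. The first is a circularity: ``order of vanishing of $\det\overline M'$ along $\alpha_Y$'' is meaningless unless $\det\overline M'\neq 0$, yet you propose to obtain independence of the $\overline\theta_j$ (equivalently, nonvanishing of the determinant) as a \emph{consequence} of the identity. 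You need an independent argument first, e.g.\ the elementary identification $\{\theta\in D(\CA,\mu)\mid \overline\theta=0\}=\alpha_0\,D(\CA',\mu')$, which combined with the $S$-independence of $\theta_1,\ldots,\theta_\ell$ yields $\overline S$-independence of the $\overline\theta_j$.

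The second and more serious gap is in the local step itself. In coordinates with $\alpha_0=x_1$, $\alpha_Y=x_2$, Definition \ref{def:Euler} only guarantees $\theta_Y\notin\alpha_0\Der(S)$, i.e.\ that \emph{some} coefficient of $\theta_Y$ is not divisible by $\alpha_0$; your argument needs the stronger fact that the coefficient $\theta_Y(x_2)$ is not divisible by $\alpha_0$, so that $\overline{\theta_Y}$ is a \emph{nonzero} scalar multiple of $\overline x_2^{\,\mu^*(Y)}\,\partial/\partial\overline x_2$. That fact is true but requires proof (it is, in essence, the statement that the localized restriction map is surjective onto $D(\{Y\},\mu^*(Y))$), and even granting it, writing $\theta_j=f_j\theta_Y+g_j\psi_Y+\cdots$ and killing $\psi_Y$ only yields the \emph{lower} bound $\operatorname{ord}_{\alpha_Y}\det\overline M'\ge\mu^*(Y)$; the exact order requires showing the complementary cofactor is coprime to $\alpha_Y$, which needs that the $\theta_j$ together with $\alpha_0\theta_k$ localize to a basis at $Y$. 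Finally, exact orders along every $\alpha_Y$ still do not give $\doteq$: one must exclude irreducible factors of $\det\overline M'$ other than the $\alpha_Y$'s, equivalently prove the degree identity $\sum_{j\ne k}\pdeg\theta_j=|\mu^*|=\sum_Y\mu^*(Y)$. This is not ``bookkeeping'': it is precisely the exponent assertion of Theorem \ref{thm:add-del} in disguise, i.e.\ the global statement the whole theorem encodes. (It can be handled, for instance, by noting that each $\bigl(\prod_{H\in\CA\setminus\{H_0\}}\alpha_H^{\mu(H)}\bigr)D_i$ lies in $D(\CA,\mu)$, so its restriction lies in the span of the $\overline\theta_j$, whence $\det\overline M'$ divides a power of $\prod_{H\ne H_0}\overline\alpha_H$.) So the architecture is reasonable, but the three pillars --- independence, exactness of the local order, and exclusion of extra factors --- are all left unbuilt, and the last two are the actual theorem.
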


\begin{theorem}
[{\cite[Thm.~0.8]{abeteraowakefield:euler}}
Addition Deletion Theorem for Multiarrangements]
\label{thm:add-del}
Suppose that $(\CA, \mu) \ne \Phi_\ell$.
Fix $H_0$ in $\CA$ and 
let  $(\CA, \mu), (\CA', \mu')$ and  $(\CA'', \mu^*)$ be the triple with respect to $H_0$. 
Then any  two of the following statements imply the third:
\begin{itemize}
\item[(i)] $(\CA, \mu)$ is free with $\exp (\CA, \mu) = \{ b_1, \ldots , b_{\ell -1}, b_\ell\}$;
\item[(ii)] $(\CA', \mu')$ is free with $\exp (\CA', \mu') = \{ b_1, \ldots , b_{\ell -1}, b_\ell-1\}$;
\item[(iii)] $(\CA'', \mu^*)$ is free with $\exp (\CA'', \mu^*) = \{ b_1, \ldots , b_{\ell -1}\}$.
\end{itemize}
\end{theorem}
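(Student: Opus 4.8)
The plan is to prove the three implications by combining Ziegler's version of Saito's criterion (Theorem~\ref{thm:ziegler-saito}) with a single structural device: the \emph{restriction map} $\rho\colon D(\CA,\mu)\to D(\CA'',\mu^*)$, $\theta\mapsto\overline\theta$, introduced before Theorem~\ref{thm:restriction2}. First I would identify its kernel. Since $\overline\theta=0$ means $\theta(f)\in\alpha_0 S$ for all $f$, i.e. $\theta\in\alpha_0\Der(S)$, I write $\theta=\alpha_0\eta$ and test the defining conditions $\theta(\alpha_H)\in\alpha_H^{\mu(H)}S$ hyperplane by hyperplane: for $H\ne H_0$ coprimality of $\alpha_0$ and $\alpha_H$ lets me cancel $\alpha_0$, and at $H_0$ one power of $\alpha_0$ is absorbed by passing from $\mu(H_0)$ to $\mu'(H_0)$. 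This shows $\eta\in D(\CA',\mu')$ in both cases $\mu(H_0)=1$ and $\mu(H_0)>1$, giving the exact sequence of graded $S$-modules
\[
0 \longrightarrow \alpha_0\, D(\CA',\mu') \longrightarrow D(\CA,\mu) \xrightarrow{\rho} D(\CA'',\mu^*),
\]
with the degree shift $\Poin(\alpha_0 D(\CA',\mu'),t)=t\,\Poin(D(\CA',\mu'),t)$. This sequence is the backbone of all three implications.

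The implication (i)$\wedge$(ii)$\Rightarrow$(iii) is then essentially a restatement of Theorems~\ref{thm:restriction} and \ref{thm:restriction2}. From freeness of $(\CA,\mu)$ and $(\CA',\mu')$ one extracts a homogeneous basis $\theta_1,\dots,\theta_\ell$ of $D(\CA',\mu')$ and an index $k$ with $\alpha_0\theta_k$ a basis element of $D(\CA,\mu)$, and the images $\overline\theta_i$ ($i\ne k$) form a basis of $D(\CA'',\mu^*)$. Because $\alpha_0\theta_k$ must be the degree-$b_\ell$ member of the basis of $D(\CA,\mu)$ furnished by (i), one has $\pdeg\theta_k=b_\ell-1$; since the bar map preserves polynomial degree, the surviving restricted derivations carry degrees $b_1,\dots,b_{\ell-1}$, yielding $\exp(\CA'',\mu^*)=\{b_1,\dots,b_{\ell-1}\}$. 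In particular $\rho$ is surjective in this case.

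For the two addition-type implications I would rely on the Saito criterion rather than on Hilbert series, since the expected Poincaré series does not by itself force freeness in ranks $\ge 3$. Consider (ii)$\wedge$(iii)$\Rightarrow$(i), and grant for the moment that $\rho$ is surjective. Lift a homogeneous basis of $D(\CA'',\mu^*)$ to derivations $\psi_1,\dots,\psi_{\ell-1}\in D(\CA,\mu)$ of degrees $b_1,\dots,b_{\ell-1}$, and let $\eta\in D(\CA',\mu')$ be the basis element of degree $b_\ell-1$ provided by (ii), so that $\alpha_0\eta\in D(\CA,\mu)$ has degree $b_\ell$. The order relation $|\mu|=|\mu'|+1$ together with the assumed exponent multisets gives $\sum_{i=1}^{\ell-1}b_i+b_\ell=|\mu|$, so by \eqref{eq:exp} it suffices to prove that $\{\psi_1,\dots,\psi_{\ell-1},\alpha_0\eta\}$ is $S$-independent. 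Factoring $\alpha_0$ out of the last column of the coefficient matrix reduces this to showing $\{\psi_1,\dots,\psi_{\ell-1},\eta\}$ is a basis of the free module $D(\CA',\mu')$ (note $D(\CA,\mu)\subseteq D(\CA',\mu')$ since $\mu'\le\mu$), which follows from a Saito computation modulo $\alpha_0$, using that the $\overline{\psi_i}$ are a basis of $D(\CA'',\mu^*)$. The deletion-type implication (i)$\wedge$(iii)$\Rightarrow$(ii) is the mirror image, obtained by dividing the distinguished degree-$b_\ell$ basis element of $D(\CA,\mu)$ by $\alpha_0$ and checking \eqref{eq:ziegler-saito} for $Q(\CA',\mu')$.

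The main obstacle, and the step I expect to require the most care, is precisely the \emph{surjectivity of $\rho$} (equivalently, control of its cokernel) in the cases not already covered by Theorems~\ref{thm:restriction}--\ref{thm:restriction2}: one must show that every derivation in $D(\CA'',\mu^*)$ lifts to $D(\CA,\mu)$. This is exactly where the Euler multiplicity of Definition~\ref{def:Euler} is forced upon us. Liftability is a local question at each $Y\in\CA''$, where the rank-two localization $(\CA_Y,\mu_Y)$ is free with the distinguished basis $\{\theta_Y,\psi_Y,D_3,\dots,D_\ell\}$, $\theta_Y\notin\alpha_0\Der(S)$; this is what pins down $\mu^*(Y)=\pdeg\theta_Y$ and guarantees that a lift with the correct order of vanishing exists locally. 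Globalizing this local liftability into surjectivity of the single map $\rho$ is the technical heart of the argument, and it is what makes $\mu^*$ the unique correct multiplicity for which an addition–deletion theorem can hold.
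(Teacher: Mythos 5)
First, note that this paper does not prove Theorem~\ref{thm:add-del} at all: it is quoted verbatim from \cite[Thm.~0.8]{abeteraowakefield:euler}, so the only meaningful comparison is with the proof in that source. Your overall frame is the right one --- the Euler exact sequence
$0 \to D(\CA',\mu') \xrightarrow{\cdot\alpha_0} D(\CA,\mu) \xrightarrow{\rho} D(\CA'',\mu^*)$
(your kernel computation is correct) together with Ziegler's criterion (Theorem~\ref{thm:ziegler-saito}) is exactly the backbone of the Abe--Terao--Wakefield argument, and your derivation of (i)$\wedge$(ii)$\Rightarrow$(iii) from Theorems~\ref{thm:restriction} and \ref{thm:restriction2} is sound. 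But the two converse implications, which are the actual content of the theorem, are not proved. You defer the surjectivity of $\rho$ as ``the technical heart,'' yet under hypotheses (ii)$+$(iii) surjectivity of $\rho$ is not a lemma one can quote or obtain by ``globalizing local liftability'': a priori it is essentially equivalent to the freeness of $(\CA,\mu)$ that you are trying to establish (a posteriori it follows \emph{from} the theorem via Theorems~\ref{thm:restriction} and \ref{thm:restriction2}). This is precisely where the cited proof does its real work, and your sketch replaces it by an appeal to the rank-two local bases that define $\mu^*$, which only shows $\rho$ lands in $D(\CA'',\mu^*)$, not that it is onto.

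Moreover, even granting surjectivity, your independence step is false as stated. You take an \emph{arbitrary} homogeneous basis element $\eta$ of $D(\CA',\mu')$ of degree $b_\ell-1$ and claim $\{\psi_1,\dots,\psi_{\ell-1},\eta\}$ is independent ``by a Saito computation modulo $\alpha_0$.'' That computation only works when $\eta(\alpha_0)\notin\alpha_0 S$. If instead $\eta(\alpha_0)\in\alpha_0 S$ (take $\mu(H_0)=1$ for concreteness, so that this means $\eta\in D(\CA,\mu)$), then $\psi_1,\dots,\psi_{\ell-1},\eta$ are $\ell$ elements of $D(\CA,\mu)$ whose polynomial degrees sum to $|\mu|-1<|\mu|$; since $Q(\CA,\mu)$ divides the determinant of the coefficient matrix of \emph{any} $\ell$ elements of $D(\CA,\mu)$, that determinant must vanish, i.e.\ the set is automatically \emph{dependent} and your candidate basis collapses. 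So the argument needs the existence of a degree-$(b_\ell-1)$ basis element with $\eta(\alpha_0)\notin\alpha_0 S$ --- but producing such an $\eta$ is exactly the content of Theorem~\ref{thm:restriction}, which presupposes that $(\CA,\mu)$ is free, the very statement to be proved. The same circularity infects your deletion direction: dividing ``the distinguished degree-$b_\ell$ basis element of $D(\CA,\mu)$ by $\alpha_0$'' presupposes that some basis element of $D(\CA,\mu)$ is divisible by $\alpha_0$, which again is Theorem~\ref{thm:restriction} and requires freeness of $(\CA',\mu')$. In short: the easy third of the theorem is correctly reduced to the quoted results, but both hard implications rest on unproved (and, in the independence step, incorrectly handled) claims, so the proposal does not constitute a proof.
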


In order to apply Theorem \ref{thm:add-del} effectively,
it is crucial to be able to determine the Euler multiplicity 
for the restrictions at hand.
The following is part of 
\cite[Prop.~4.1]{abeteraowakefield:euler}
relevant for our purposes.

\begin{proposition}
\label{prop:Euler}
Let $H_0\in\CA$,  $\CA'' = \CA^{H_0}$ and let  $X\in\CA''$.
Let $\mu$ be a multiplicity on $\CA$.
Let $\mu_0 = \mu(H_0)$. Further let $k=|\CA_X|$ and $\mu_1=\max\{\mu(H) \mid H\in\CA_X\setminus\{H_0\}\}$.
\begin{itemize}
\item[(1)] 
If $k=2$, then $\mu^*(X) = \mu_1$.
\item[(2)] 
If $k=3,\ 2\mu_0\le |\mu_X|$, and  
$2\mu_1\le |\mu_X|$, then $\mu^*(X)=\left\lfloor{|\mu_X|}/{2}\right\rfloor$.
\item[(3)] If $|\mu_X| \le 2k-1$ and $\mu_0 >0$, then $\mu^*(X) = k-1$.
\end{itemize}
\end{proposition}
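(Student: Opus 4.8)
The plan is to reduce everything to a rank-two computation and then certify the claimed value of $\mu^*(X)$ by means of Ziegler's analogue of Saito's criterion (Theorem \ref{thm:ziegler-saito}). By Definition \ref{def:Euler}, the Euler multiplicity $\mu^*(X)$ depends only on the localization $(\CA_X, \mu_X)$, as it is read off from a distinguished basis $\{\theta_X, \psi_X, D_3, \dots, D_\ell\}$ of $D(\CA_X, \mu_X)$ with $\theta_X \notin \alpha_0 \Der(S)$ and $\psi_X \in \alpha_0 \Der(S)$. Since $X \in \CA''$ has codimension two, $\CA_X$ is an essential central arrangement of rank two, consisting of $k = |\CA_X|$ lines one of which is $H_0 = \ker \alpha_0$. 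Choosing coordinates so that $\CA_X$ lives in the two variables $x = \alpha_0$ and $y$, the directions $D_3, \dots, D_\ell$ are inert and $\mu^*(X)$ is governed by the rank-two part in $S = \BBK[x,y]$. Thus in each case it suffices to exhibit a homogeneous \emph{transverse} derivation $\theta \in D(\CA_X, \mu_X) \setminus \alpha_0 \Der(S)$ whose polynomial degree is the asserted value of $\mu^*(X)$, together with a homogeneous companion $\psi \in \alpha_0 \Der(S) \cap D(\CA_X, \mu_X)$ of the complementary degree $|\mu_X| - \pdeg \theta$, satisfying $\det M(\theta, \psi) \dot= Q(\CA_X, \mu_X)$. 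Once such a pair is produced, Theorem \ref{thm:ziegler-saito} shows $\{\theta, \psi\}$ is a basis, whence $\theta = \theta_X$, $\psi = \psi_X$, and $\mu^*(X) = \pdeg \theta$.

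For (1), with $k = 2$ and $\CA_X = \{H_0, H_1\}$ and $\alpha_1 = y$ up to scalar, I take $\theta = y^{\mu_1} \partial_y$ and $\psi = x^{\mu_0} \partial_x$. Both lie in $D(\CA_X, \mu_X)$; one has $\theta \notin \alpha_0 \Der(S)$, while $\psi = x \cdot x^{\mu_0 - 1} \partial_x \in \alpha_0 \Der(S)$ since $\mu_0 \ge 1$; and $\det M(\theta, \psi) = x^{\mu_0} y^{\mu_1} \dot= Q(\CA_X, \mu_X)$. Hence $\mu^*(X) = \pdeg \theta = \mu_1$.

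For (2) and (3) I seek the companion in the form $\psi = \alpha_0 \eta$ with $\eta \in D(\CA_X, \mu_X')$, where $\mu_X'$ is obtained from $\mu_X$ by lowering the multiplicity of $H_0$ by one; a short coprimality check gives the identity $\alpha_0 \Der(S) \cap D(\CA_X, \mu_X) = \alpha_0\, D(\CA_X, \mu_X')$, which is exactly the deletion at $H_0$ from Definition \ref{def:Euler}. The transverse $\theta$ of the prescribed degree $d$ (namely $d = \lfloor |\mu_X|/2 \rfloor$ in (2) and $d = k-1$ in (3)) is then produced by a dimension count: the homogeneous degree-$d$ derivations of $S = \BBK[x,y]$ form a space of dimension $2(d+1)$, and each constraint $\theta(\alpha_i) \in \alpha_i^{\mu(H_i)} S$ cuts this down by at most $\mu(H_i)$, so that $\dim D(\CA_X, \mu_X)_d \ge 2(d+1) - |\mu_X|$. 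In case (3) the hypothesis $|\mu_X| \le 2k-1$ makes this bound positive at $d = k-1$, forcing a nonzero derivation there; the balancedness hypotheses $2\mu_0 \le |\mu_X|$ and $2\mu_1 \le |\mu_X|$ play the analogous role at $d = \lfloor |\mu_X|/2 \rfloor$ in case (2), while additionally preventing a single line from driving the two exponents apart. In each case one then checks that a suitable member is genuinely transverse, i.e.\ that its $\partial_y$-coefficient is not divisible by $\alpha_0$ (recall $\alpha_0$ automatically divides the $\partial_x$-coefficient of any derivation in $D(\CA_X, \mu_X)$, since $\mu_0 \ge 1$).

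The main obstacle is the final verification in (2) and (3): certifying that the transverse $\theta$ we have found is the transverse \emph{generator} rather than a higher transverse element, equivalently, producing the complementary $\alpha_0$-divisible generator $\psi$ of degree exactly $|\mu_X| - d$ and confirming $\det M(\theta, \psi) \dot= Q(\CA_X, \mu_X)$. This is precisely where the hypotheses are indispensable: when they fail, the complementary derivation of the required degree need not exist, a low-degree Euler-type transverse derivation can survive and depress $\mu^*(X)$, and then the determinant condition breaks down and the formula changes. I expect to handle this by analysing the module $D(\CA_X, \mu_X')$ of the one-step deletion at $H_0$, whose rank-two exponents are controlled by the same counting and balancedness inequalities, and by invoking the Addition--Deletion Theorem \ref{thm:add-del} for the triple $(\CA_X, \mu_X)$, $(\CA_X, \mu_X')$, $(\CA_X^{H_0}, \mu^*)$ to force the exponents of $(\CA_X, \mu_X)$ to split as $\{d,\ |\mu_X| - d\}$ with $\theta$ on the transverse side.
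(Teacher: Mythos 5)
The paper itself contains no proof of this proposition: it is introduced with ``The following is part of [ATW, Prop.~4.1] relevant for our purposes'' and quoted from \cite[Prop.~4.1]{abeteraowakefield:euler}, so any self-contained argument is necessarily a different route from the paper's. Within your attempt, part (1) is complete and correct: the pair $\theta = y^{\mu_1}\partial_y$, $\psi = x^{\mu_0}\partial_x$ together with Theorem \ref{thm:ziegler-saito} settles $k=2$, and your reduction to the rank-two localization as well as the identity $\alpha_0\Der(S)\cap D(\CA_X,\mu_X)=\alpha_0 D(\CA_X,\mu_X')$ are sound.

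Parts (2) and (3), however, are not proved, and the plan as described cannot be completed. Your dimension count only bounds $\dim D(\CA_X,\mu_X)_d$ from below, and a lower bound can never determine $\mu^*(X)$: if the transverse generator had some degree $d'<d$, the degree-$d$ count would be satisfied just as well by its multiples. What is required is the opposite information --- the exact exponents of the rank-two localization together with the identification of which of the two generators is transverse --- and this is the entire content of the proposition; in \cite{abeteraowakefield:euler} it rests on nontrivial facts about exponents of $2$-multiarrangements (Wakamiko's theorem in case (2), a separate argument in case (3)). Your remark that balancedness ``prevents a single line from driving the two exponents apart'' asserts precisely this content without proving it. The appeal to Theorem \ref{thm:add-del} does not close the gap either: in the direction you invoke it (forcing the exponents to split as $\{d,\,|\mu_X|-d\}$) you would need to know beforehand that the restriction has exponent $d$, i.e.\ that $\mu^*(X)=d$, which is the quantity being computed; in the legitimate direction (statements (i) and (ii) imply (iii)) you would need the exponents of both $(\CA_X,\mu_X)$ and its deletion, which the counting argument does not supply.

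There is a decisive symptom that this gap is not repairable by more of the same: nothing in your outline distinguishes $\mu_0=1$ from $\mu_0\ge 2$, yet part (3) is false for $\mu_0=1$ exactly as quoted. Take $\mu_X\equiv\one$ with $k\ge 3$ lines; then $|\mu_X|=k\le 2k-1$ and $\mu_0=1>0$, but $\mu^*(X)=1$, not $k-1$. Indeed, the Euler derivation $\theta_E$ lies in $D(\CA_X,\mu_X)$ precisely when every multiplicity is at most one; it is transverse of polynomial degree $1$, the degree-one part of $D(\CA_X,\mu_X)$ is spanned by it, and no degree-one element lies in $\alpha_0\Der(S)$, so every basis as in Definition \ref{def:Euler} has $\pdeg\theta_X=1$. (This is consistent with Theorem \ref{thm:add-del} recovering Terao's addition--deletion theorem: the Euler restriction of a simple multiplicity is simple.) The hypothesis in \cite[Prop.~4.1]{abeteraowakefield:euler} is $\mu_0\ge 2$, not $\mu_0>0$ as transcribed in the statement above --- note that the paper only ever applies part (3) with $\mu_0=2$ --- and any correct proof must use $\mu_0\ge 2$ at exactly the step you leave vague (``one then checks that a suitable member is genuinely transverse''): it is what rules out a surviving low-degree transverse derivation such as $\theta_E$, which otherwise depresses $\mu^*(X)$ below $k-1$.
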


\subsection{Ziegler's Multiplicity and concentrated Multiplicities} 
\label{subsec:ziegler}

Recall 
Ziegler's multiplicity $\kappa$ from Definition \ref{def:kappa}
and concentrated multiplicities 
from Definition \ref{def:delta}.
The following combines
\cite[Prop.~5.2]{abeteraowakefield:euler}, parts of its proof
and Theorem \ref{thm:zieglermulti}. 
The proof of Proposition \ref{prop:delta}(i)
given in \cite{abeteraowakefield:euler} 
depends on Theorem \ref{thm:add-del}.
In \cite[Prop.~2.17]{hogeroehrleschauenburg:free}, we
presented an elementary explicit 
construction for a homogeneous 
$S$-basis of $D(\CA, \delta)$.

\begin{proposition}
\label{prop:delta}
Let $\CA$ be a free arrangement with
exponents $\exp \CA = \{1, e_2, \ldots, e_\ell\}$.
Fix $H_0 \in \CA$, $m_0 \in \BBZ_{\ge 1}$ and let 
$\delta = \delta_{H_0,m_0}$ be 
as in Definition \ref{def:delta}.
Let  $(\CA'', \delta^*)$ be the restriction with respect to $H_0$. 
Then we have
\begin{itemize}
\item[(i)] 
$(\CA, \delta)$ is free with exponents
$\exp (\CA, \delta) = \{m_0, e_2, \ldots, e_\ell\}$;
\item[(ii)] 
$(\CA'', \delta^*) = (\CA'', \kappa)$ is free with exponents
$\exp  (\CA'', \kappa) = \{e_2, \ldots, e_\ell\}$.
\end{itemize}
\end{proposition}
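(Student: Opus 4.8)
I would prove the identification of the Euler multiplicity in (ii) first, as it is the substantive point, and then obtain the freeness statement (i) from it by an induction on $m_0$ built on the Addition Deletion Theorem \ref{thm:add-del}.

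For (ii) the decisive observation is that the Euler multiplicity is computed one localization at a time. Fix $Y \in \CA''$ and set $k := |\CA_Y|$, so that in the rank-two localization $(\CA_Y, \delta_Y)$ the hyperplane $H_0$ carries multiplicity $m_0$ and the remaining $k-1$ hyperplanes carry multiplicity $1$; by Definition \ref{def:kappa} we have $\kappa(Y) = k-1$. The equality $\delta^*(Y) = k-1 = \kappa(Y)$ is then a purely rank-two computation read off from Proposition \ref{prop:Euler}: for $k = 2$ part (1) gives $\delta^*(Y) = \mu_1 = 1$, and for $k \ge 3$ part (3), whose hypothesis $\mu_0 = m_0 > 0$ is in force, gives $\delta^*(Y) = k - 1$. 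As this holds for every $Y \in \CA''$ we get $\delta^* = \kappa$, hence $(\CA'', \delta^*) = (\CA'', \kappa)$; the freeness of the latter with $\exp(\CA'', \kappa) = \{e_2, \dots, e_\ell\}$ is then precisely Ziegler's Theorem \ref{thm:zieglermulti} applied to the free arrangement $\CA$.

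The content behind this local computation, which I would make explicit, is the shape of the transverse generator $\theta_Y$ from Definition \ref{def:Euler}. Choosing a coordinate $y$ transverse to $\alpha_0$ inside the relevant two-plane, the derivation $\theta_Y := \bigl(\prod_{H \in \CA_Y \setminus \{H_0\}} \alpha_H\bigr)\partial_y$ lies in $D(\CA_Y, \delta_Y)$, since it annihilates $\alpha_0$ and its coefficient is divisible by every $\alpha_H$ with $H \ne H_0$; it is not divisible by $\alpha_0$ and has polynomial degree $k-1$, while a complementary derivation $\psi_Y \in \alpha_0\Der(S)$ of degree $m_0$ absorbs the concentrated multiplicity, so that $\{\theta_Y, \psi_Y\}$ completes to a homogeneous basis of $D(\CA_Y, \delta_Y)$ by Ziegler's criterion, Theorem \ref{thm:ziegler-saito}. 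The main obstacle, and the place where the real work sits, is to confirm that $\theta_Y$ is the minimal transverse generator, i.e.\ that concentrating a heavy multiplicity at $H_0$ suppresses every lower-degree derivation not divisible by $\alpha_0$; note for instance that the Euler derivation fails to lie in $D(\CA_Y, \delta_Y)$ as soon as $m_0 \ge 2$. It is exactly this phenomenon --- absent for the bare simple arrangement, whose Euler multiplicity need not agree with $\kappa$ --- that Proposition \ref{prop:Euler} packages, and it is what makes (ii) a statement genuinely about the heavy multiplicity $\delta$.

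For (i) I would then induct on $m_0$. The case $m_0 = 1$ is the hypothesis, since $(\CA, \delta) = \CA$ is free with $\exp \CA = \{1, e_2, \dots, e_\ell\}$. For $m_0 \ge 2$ the deletion of $(\CA, \delta)$ with respect to $H_0$ is $(\CA, \delta_{H_0, m_0 - 1})$, which by the inductive hypothesis is free with exponents $\{m_0 - 1, e_2, \dots, e_\ell\}$; together with the freeness of the restriction $(\CA'', \delta^*)$ with exponents $\{e_2, \dots, e_\ell\}$ supplied by (ii), the Addition direction of Theorem \ref{thm:add-del} gives that $(\CA, \delta)$ is free with exponents $\{m_0, e_2, \dots, e_\ell\}$, closing the induction. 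Alternatively one can sidestep the induction by exhibiting a homogeneous $S$-basis of $D(\CA, \delta)$ directly, as in \cite[Prop.~2.17]{hogeroehrleschauenburg:free}.
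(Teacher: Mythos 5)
Your overall route is the same as the paper's: the paper disposes of this proposition by citing \cite[Prop.~5.2]{abeteraowakefield:euler} together with parts of its proof (namely the local identification $\delta^* = \kappa$) and Theorem \ref{thm:zieglermulti}, and it records the explicit-basis alternative \cite[Prop.~2.17]{hogeroehrleschauenburg:free} that you mention at the end. Your induction on $m_0$ for part (i) is exactly that argument and is sound, granted (ii).

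There is, however, a genuine gap in your treatment of (ii), at the sentence asserting that for $k \ge 3$ Proposition \ref{prop:Euler}(3) applies because ``$\mu_0 = m_0 > 0$ is in force''. Part (3) has a \emph{second} hypothesis, $|\mu_X| \le 2k-1$, which for $\delta$ reads $m_0 + (k-1) \le 2k-1$, i.e.\ $m_0 \le k$; this fails as soon as the concentrated multiplicity exceeds $|\CA_Y|$, so the quoted case simply does not cover large $m_0$ (to reach that dominant regime one needs a case of \cite[Prop.~4.1]{abeteraowakefield:euler} not reproduced in Proposition \ref{prop:Euler}, or a direct argument). The criterion is also delicate at the other end: read literally with ``$\mu_0>0$'' it would apply to $m_0 = 1$, i.e.\ to the simple multiplicity $\one$, and give $\one^*(Y) = k-1$; but the Euler multiplicity of a simple multiplicity is again simple (for a rank-two localization with $k\ge 3$ lines, the only degree-one element of $D(\CA_Y,\one)$ up to scalars is $\theta_E \notin \alpha_0\Der(S)$, and $D(\CA_Y,\one)\cap\alpha_0\Der(S)$ contains nothing of degree one, so $\theta_Y\ \dot=\ \theta_E$), which is precisely why Theorem \ref{thm:add-del} specializes to Theorem \ref{thm:add-del-simple}. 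Hence the identification $\delta^* = \kappa$ genuinely requires $m_0 \ge 2$, as your own observation about the Euler derivation already signals. The good news is that your second paragraph contains a complete and uniform substitute, and it needs no appeal to Proposition \ref{prop:Euler} at all: taking $\psi_Y := \alpha_0^{m_0-1}\theta_E$ (which lies in $\alpha_0\Der(S)$ exactly when $m_0 \ge 2$), the pair $\theta_Y, \psi_Y$ is $S$-independent with $\pdeg\theta_Y + \pdeg\psi_Y = (k-1) + m_0 = |\delta_Y|$, hence is a basis of the (essentialized) $D(\CA_Y,\delta_Y)$ by Theorem \ref{thm:ziegler-saito}, and then Definition \ref{def:Euler} reads off $\delta^*(Y) = \pdeg\theta_Y = k-1 = \kappa(Y)$ directly. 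In particular the ``minimality of the transverse generator'', which you single out as the main remaining obstacle and defer back to Proposition \ref{prop:Euler}, is not a separate issue: the Euler multiplicity is well defined and is computed from \emph{any} basis of the prescribed form, so exhibiting this one basis finishes (ii) for all $m_0 \ge 2$ and all $k$. Part (i) is unaffected by the $m_0 = 1$ caveat, since its base case is the hypothesis on $\CA$ and every induction step restricts a multiarrangement whose multiplicity at $H_0$ is at least $2$.
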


Part (i) of the following is a converse 
of Proposition \ref{prop:delta}(i).

\begin{proposition}
\label{prop:reversedelta}
Let $\CA$ be an  arrangement. 
Fix $H_0 \in \CA$, $m_0 \in \BBZ_{\ge 1}$ and let 
$\delta = \delta_{H_0,m_0}$ be 
as in Definition \ref{def:delta}.
Let  $(\CA'', \delta^*)$ be the restriction with respect to $H_0$. 
Suppose that 
$(\CA, \delta)$ is free 
with exponents
$\exp (\CA, \delta) = \{m_0, e_2, \ldots, e_\ell\}$.
Then we have
\begin{itemize}
\item[(i)] 
$\CA$ is free with
exponents $\exp \CA = \{1, e_2, \ldots, e_\ell\}$.
\item[(ii)] 
$(\CA'', \delta^*) = (\CA'', \kappa)$ is free with exponents
$\exp  (\CA'', \kappa) = \{e_2, \ldots, e_\ell\}$.
\end{itemize}
\end{proposition}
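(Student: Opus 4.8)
The plan is to establish (i) directly by a graded module-splitting argument, and then to read off (ii) from Proposition~\ref{prop:delta}(ii). Write $H_0=\ker\alpha_0$ and let $\theta_E:=\sum_{i=1}^{\ell}x_iD_i\in D(\CA)$ denote the Euler derivation. The first step is the observation that $\alpha_0^{m_0-1}\theta_E\in D(\CA,\delta)$: indeed $(\alpha_0^{m_0-1}\theta_E)(\alpha_0)=\alpha_0^{m_0}$ and $(\alpha_0^{m_0-1}\theta_E)(\alpha_H)=\alpha_0^{m_0-1}\alpha_H\in\alpha_HS$ for every $H\neq H_0$.

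Next I would consider the $S$-linear evaluation map $\varepsilon\colon D(\CA,\delta)\to S$, $\theta\mapsto\theta(\alpha_0)$. By the definition of $D(\CA,\delta)$ its image is contained in the ideal $(\alpha_0^{m_0})$, and since $\varepsilon(\alpha_0^{m_0-1}\theta_E)=\alpha_0^{m_0}$ the image is \emph{exactly} $(\alpha_0^{m_0})$, a free graded $S$-module of rank one generated in degree $m_0$. Set $K:=\ker\varepsilon=\{\theta\in D(\CA,\delta)\mid\theta(\alpha_0)=0\}$. The crucial point is that once $\theta(\alpha_0)=0$ the defining condition at $H_0$ becomes vacuous for \emph{both} the multiplicity $\delta$ and the simple multiplicity; hence $K$ is literally the same module as $\{\theta\in D(\CA)\mid\theta(\alpha_0)=0\}$. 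The graded short exact sequence $0\to K\to D(\CA,\delta)\xrightarrow{\varepsilon}(\alpha_0^{m_0})\to 0$ splits because its right-hand term is free (a section sends $\alpha_0^{m_0}$ to $\alpha_0^{m_0-1}\theta_E$), so $D(\CA,\delta)\cong K\oplus(\alpha_0^{m_0})$. As $D(\CA,\delta)$ is free with $\exp(\CA,\delta)=\{m_0,e_2,\ldots,e_\ell\}$, the summand $K$ is a graded direct summand of a free module, hence free, with exponents $\{e_2,\ldots,e_\ell\}$.

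Running the identical evaluation map on the simple arrangement gives $0\to K\to D(\CA)\xrightarrow{\varepsilon}(\alpha_0)\to 0$ with the \emph{same} kernel $K$ and image $(\alpha_0)$ (as $\varepsilon(\theta_E)=\alpha_0$); this splits via $\alpha_0\mapsto\theta_E$, whence $D(\CA)\cong K\oplus(\alpha_0)$ is free with exponents $\{1,e_2,\ldots,e_\ell\}$. This proves (i). Part (ii) is then immediate: knowing that $\CA$ is free with $\exp\CA=\{1,e_2,\ldots,e_\ell\}$, Proposition~\ref{prop:delta}(ii) gives that $(\CA'',\delta^*)=(\CA'',\kappa)$ is free with $\exp(\CA'',\kappa)=\{e_2,\ldots,e_\ell\}$.

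I expect the conceptual heart of the argument to be the identification of $K=\ker\varepsilon$ as a module \emph{independent of the multiplicity at $H_0$}; this is exactly what transfers freeness from $(\CA,\delta)$ back to $\CA$. By contrast, the more obvious route---inducting downward on $m_0$ through the Addition Deletion Theorem~\ref{thm:add-del}---seems to stall, since lowering the multiplicity at $H_0$ would require the freeness of the restriction $(\CA'',\delta^*)$, which is not available at the outset. The only input beyond elementary grading bookkeeping is the standard fact that a graded direct summand of a free $S$-module is free.
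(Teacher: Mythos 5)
Your proof is correct and is essentially the paper's own argument: your kernel $K$ of the evaluation map is exactly the paper's annihilator $\Ann(H_0)$, your splitting $D(\CA,\delta)\cong K\oplus(\alpha_0^{m_0})$ is the paper's decomposition $D(\CA,\delta)=S\,\alpha_0^{m_0-1}\theta_E\oplus\Ann(H_0)$, and part (ii) is deduced from Proposition~\ref{prop:delta}(ii) in both. The only difference is packaging: where you invoke splitting of graded exact sequences, freeness of graded direct summands, and (implicitly) a Hilbert-series cancellation to pin down the exponents of $K$, the paper extracts an explicit homogeneous basis of $\Ann(H_0)$ (via minimal generators and \cite[Thm.~A.19]{orlikterao:arrangements}) and then verifies freeness of $\CA$ with basis $\{\theta_E,\theta_2,\ldots,\theta_\ell\}$ by Saito's criterion.
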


\begin{proof}
Clearly, part (ii) follows from 
part (i) and Proposition \ref{prop:delta}(ii).

The proof of (i) is analogous to the one of 
\cite[Prop.~4.27]{orlikterao:arrangements}.
Let $\alpha_0 \in V^*$ with $H_0 = \ker \alpha_0$.
Now let 
\[
\Ann(H_0) = \{\theta \in D(\CA, \delta) \mid \theta(\alpha_0) = 0\}
\]
be the annihilator of $H_0$ in $D(\CA, \delta)$ which is a
graded $S$-submodule of $D(\CA, \delta)$.
Let $\theta_E$ be the Euler derivation in $\Der(S)$, 
\cite[Def.~4.7]{orlikterao:arrangements}.
Define
\[
\theta_\delta := \alpha_0^{m_0-1} \theta_E.
\]
Then $\theta_\delta$ belongs to $D(\CA, \delta)$.
Moreover for each $\theta \in D(\CA, \delta)$, we have 
\[
\theta - \frac{\theta(\alpha_0)}{\alpha_0^{m_0}} \theta_\delta
\in \Ann(H_0).
\]
Therefore, since 
\[
S \theta_\delta \cap \Ann(H_0) = \{ 0 \},
\]
we derive that 
\[
D(\CA, \delta) = S \theta_\delta  \oplus \Ann(H_0)
\]
is a direct sum of $S$-modules.
Let $\{\theta_2, \ldots, \theta_\ell\}$ be a
minimal system of homogeneous generators of the 
$S$-module $\Ann(H_0)$.
Then $\{\theta_\delta, \theta_2, \ldots, \theta_\ell\}$ 
is a 
minimal system of homogeneous generators of 
$D(\CA, \delta)$.
It follows from 
\cite[Thm.~A.19]{orlikterao:arrangements}, that 
$\{\theta_\delta, \theta_2, \ldots, \theta_\ell\}$ 
is a homogeneous $S$-basis of 
$D(\CA, \delta)$.

Since $\Ann(H_0) \subset D(\CA, \delta) \subset D(\CA)$ are
$S$-submodules, 
$\{\theta_2, \ldots, \theta_\ell\}$ is linearly independent 
over $S$ in $D(\CA)$. 
Since $|\delta| = |\CA| + m_0 -1$ 
and $\pdeg \theta_\delta = m_0$,
it follows from 
Ziegler's analogue of Saito's criterion 
\eqref{eq:exp}
that 
$\sum_{i = 2}^\ell \pdeg \theta_i = |\CA| -1$.
Clearly, $\theta_E$ belongs to $D(\CA)$ but not to $\Ann(H_0)$.
Consequently, 
it follows from Saito's criterion,  
\cite[Thm.~4.23]{orlikterao:arrangements},
that $\CA$ is free with 
$\{\theta_E, \theta_2, \ldots, \theta_\ell\}$ a homogeneous
$S$-basis of $D(\CA)$.
\end{proof}

Note that the proof of 
Proposition \ref{prop:reversedelta} shows that $m_0$ is 
necessarily an exponent of $(\CA, \delta)$ if the latter is free
(so it does no harm to require this in the hypothesis).

The following is a special case of a general 
compatibility result 
of the Euler multiplicity with localizations from 
\cite[Lem.~2.14]{hogeroehrleschauenburg:free}.

\begin{lemma}
\label{lem:euler}
Let $X \in L(\CA)$, $H_0 \in \CA_X$,
and let $\CA''$ be the restriction 
with respect to $H_0$. Let $\delta = \delta_{H_0,m_0}$ 
be as in Definition \ref{def:delta}. 
Then we have 
\begin{itemize}
\item[(i)]
$((\CA_X)'', (\delta_X)^*) = ((\CA'')_X, (\delta^*)_X)$, and 
\item[(ii)]
$((\CA_X)'', \kappa) = ((\CA'')_X, \kappa_X)$ 
(where $\kappa$ on the left is the canonical multiplicity 
resulting from restriction of $\CA_X$ to $H_0$).
\end{itemize}
\end{lemma}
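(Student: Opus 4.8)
The plan is to reduce the whole statement to one elementary observation about the intersection lattice, namely that every $Y \in (\CA_X)''$ satisfies $X \subseteq Y$, and then to exploit that both Ziegler's canonical multiplicity $\kappa$ and the Euler multiplicity $\delta^*$ are \emph{local} invariants: at a point $Y$ they are read off solely from the localization at $Y$ of the relevant (multi)arrangement, together with the fixed hyperplane $H_0$. First I would identify the two underlying simple arrangements. For $Y \in (\CA_X)''$, write $Y = H_0 \cap H$ with $H \in \CA_X \setminus \{H_0\}$; then $X \subseteq H_0$ and $X \subseteq H$ force $X \subseteq Y$, so in particular $X \in L(\CA'')$ and the localization $(\CA'')_X$ is defined. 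Unravelling the definitions,
\[
(\CA_X)'' = \{H_0 \cap H \mid H \in \CA, \ X \subseteq H, \ H \ne H_0\},
\]
whereas $(\CA'')_X = \{Z \in \CA'' \mid X \subseteq Z\}$; since $X \subseteq H_0$, one has $X \subseteq H_0 \cap H$ if and only if $X \subseteq H$, and the two collections coincide. This settles the equality of the underlying arrangements in both (i) and (ii).

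The decisive consequence of $X \subseteq Y$ is that the rank-two localizations at $Y$ agree. Indeed, $Y \subseteq H$ already forces $X \subseteq H$, whence
\[
(\CA_X)_Y = \{H \in \CA_X \mid Y \subseteq H\} = \{H \in \CA \mid Y \subseteq H\} = \CA_Y ,
\]
and restricting $\delta$ accordingly gives $(\delta_X)_Y = \delta_Y$ (and similarly for the multiplicities attached to $\kappa$). Part (ii) is now immediate: on the left $\kappa(Y) = |(\CA_X)_Y| - 1$ and on the right $\kappa(Y) = |\CA_Y| - 1$, and these are equal by the displayed identity.

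For part (i) I would invoke the local nature of the Euler multiplicity from Definition \ref{def:Euler}. There $(\delta_X)^*(Y)$ is obtained as $\pdeg \theta_Y$ from a distinguished homogeneous basis $\{\theta_Y, \psi_Y, D_3, \dots, D_\ell\}$ of $D((\CA_X)_Y, (\delta_X)_Y)$ adapted to $\alpha_0$, while $\delta^*(Y)$ is obtained the same way from such a basis of $D(\CA_Y, \delta_Y)$. Since the previous step shows these two rank-two multiarrangements are literally the same, and the distinguished hyperplane $H_0 = \ker \alpha_0$ (hence the ambient $\Der(S)$) is unchanged, the defining data coincide and therefore $(\delta_X)^*(Y) = \delta^*(Y) = (\delta^*)_X(Y)$ for every $Y$ in the common arrangement.

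I do not expect a genuine obstacle here: the entire argument is lattice-theoretic bookkeeping, and the only point that truly needs care is the verification that $X \subseteq Y$ for all $Y \in (\CA_X)''$, since it is exactly this inclusion that makes restrict-then-localize and localize-then-restrict produce identical rank-two data at each $Y$. Once that is in place, both parts follow from the locality of the two multiplicity constructions. Alternatively, one may simply cite the general compatibility of the Euler multiplicity with localization, \cite[Lem.~2.14]{hogeroehrleschauenburg:free}, of which the lemma is the stated specialization.
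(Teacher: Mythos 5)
Your proof is correct, and it reaches the statement by a more self-contained route than the paper does. The paper's own proof is two lines: part (i) is quoted directly from the general compatibility of the Euler multiplicity with localizations, \cite[Lem.~2.14]{hogeroehrleschauenburg:free} (the lemma is introduced in the paper precisely as a special case of that result), and part (ii) is then deduced from part (i) together with the identification $((\CA_X)'', (\delta_X)^*) = ((\CA_X)'', \kappa)$ supplied by Proposition \ref{prop:delta}(ii). You instead verify everything from the definitions: the pivotal inclusion $X \subseteq Y$ for every $Y \in (\CA_X)''$, the resulting equality of the underlying simple arrangements, the equality of the rank-two data $(\CA_X)_Y = \CA_Y$ and $(\delta_X)_Y = \delta_Y$, and then part (ii) by the hyperplane count of Definition \ref{def:kappa} and part (i) by the locality built into Definition \ref{def:Euler}. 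In effect you re-prove the relevant special case of \cite[Lem.~2.14]{hogeroehrleschauenburg:free}, which you also mention as an alternative --- that alternative is exactly the paper's proof. Your route buys two things: it is self-contained, and your argument for (ii) is independent of (i) and of Proposition \ref{prop:delta}(ii); the latter is stated in the paper under a freeness hypothesis on the ambient arrangement, which Lemma \ref{lem:euler} itself does not impose, so your direct count for $\kappa$ sidesteps even that mild mismatch. What the paper's citation-based proof buys is brevity and a visible link between the two parts through the identity $\delta^* = \kappa$.
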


\begin{proof}
Part (i) follows from 
\emph{loc.~cit}.
Part (ii) follows from part (i) together with 
the fact that 
$((\CA_X)'', (\delta_X)^*) = ((\CA_X)'', \kappa)$, 
cf.~Proposition \ref{prop:delta}(ii).
\end{proof}

\subsection{Inductive Freeness for Multiarrangements}
\label{ssec:indutive}
As in the simple case, Theorem \ref{thm:add-del} motivates 
the notion of inductive freeness. 

\begin{defn}[{\cite[Def.~0.9]{abeteraowakefield:euler}}]
\label{def:indfree}
The class $\CIFM$ of \emph{inductively free} multiarrangements 
is the smallest class of multiarrangements subject to
\begin{itemize}
\item[(i)] $\Phi_\ell \in \CIFM$ for each $\ell \ge 0$;
\item[(ii)] for a multiarrangement $(\CA, \mu)$, if there exists a hyperplane $H_0 \in \CA$ such that both
$(\CA', \mu')$ and $(\CA'', \mu^*)$ belong to $\CIFM$, and $\exp (\CA'', \mu^*) \subseteq \exp (\CA', \mu')$, 
then $(\CA, \mu)$ also belongs to $\CIFM$.
\end{itemize}
\end{defn}

\begin{remark}
\label{rem:rank2indfree}
As for simple arrangements, if $r(\CA) \le 2$,
then $(\CA, \mu)$  is inductively free,  
\cite[Cor.~7]{ziegler:multiarrangements}.
\end{remark}

\begin{remark}
\label{rem:indtable}
In analogy to the simple case, 
cf.~\cite[\S 4.3, p.~119]{orlikterao:arrangements}, 
\cite[Rem.~2.9]{hogeroehrle:indfree}, 
it is possible to describe an 
inductively free multiarrangement $(\CA, \mu)$
by means of a so called 
\emph{induction table}.
In this process we start with an inductively free multiarrangement
(frequently $\Phi_\ell$) and add hyperplanes successively ensuring that 
part (ii) of Definition \ref{def:indfree} is satisfied.
We refer to this process as \emph{induction of hyperplanes}.
This procedure amounts to 
choosing an order on consecutive multiplicities $\mu_i$ of 
the simple arrangement  
$\CA$ 
for $i = 1, \ldots, n = |\mu|$ 
(see Definition \ref{def:order}), so that $|\mu_i | = i$, $\mu_n = \mu$,
and each restriction
$(\CA'', \mu_i^*)$ 
is inductively free.
As in the simple case, 
in the associated induction table 
we record in the $i$-th row the information 
of the $i$-th step of this process, by 
listing $\exp (\CA',\mu_i') = \exp (\CA,\mu_{i-1})$, 
the defining form $\alpha_{H_i}$ of $H_i$, 
as well as $\exp (\CA'', \mu_i^*)$, 
For instance, see Tables 
\ref{indtableD}, 
\ref{indtableGrr4}, 
and \ref{indtableG29} below. 
\end{remark}

We record an easy consequence
of Proposition \ref{prop:delta}(ii) in the context of 
inductive freeness which is going to be very useful in 
our proof of Theorem \ref{thm:Wdelta}.

\begin{corollary}
\label{cor:indfreedelta}
Suppose that both
$\CA$ and $(\CA'', \kappa)$ 
are inductively free.
Then  
$(\CA, \delta)$ is inductively free.
\end{corollary}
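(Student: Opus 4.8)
The plan is to prove Corollary~\ref{cor:indfreedelta} by exhibiting an explicit inductive-freeness chain for $(\CA, \delta)$ that builds up the multiplicity at $H_0$ one step at a time, using the given inductive freeness of $\CA$ as the base case and the inductive freeness of $(\CA'', \kappa)$ to control the restrictions that arise along the way. The point is that $\delta = \delta_{H_0, m_0}$ differs from the simple multiplicity only at $H_0$, where it takes the value $m_0$; so it suffices to add the hyperplane $H_0$ to its own multiset $m_0 - 1$ times, starting from the simple arrangement $\CA$.

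First I would recall that $\CA$ is inductively free, hence $\CA \in \CIFM$ when regarded as the simple multiarrangement (with $\mu \equiv 1$), and by Proposition~\ref{prop:delta} it is free with $\exp \CA = \{1, e_2, \ldots, e_\ell\}$. Now consider the sequence of multiplicities $\delta_j := \delta_{H_0, j}$ for $j = 1, \ldots, m_0$, so that $\delta_1$ is the simple multiplicity and $\delta_{m_0} = \delta$. For each step from $\delta_{j}$ to $\delta_{j+1}$, I take $H_0$ as the distinguished hyperplane in Definition~\ref{def:Euler}: the deletion of $(\CA, \delta_{j+1})$ with respect to $H_0$ is exactly $(\CA, \delta_j)$, since decreasing the multiplicity at $H_0$ by one recovers $\delta_j$. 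By Proposition~\ref{prop:delta}(i) every $(\CA, \delta_j)$ is free with $\exp(\CA, \delta_j) = \{j, e_2, \ldots, e_\ell\}$, and crucially the \emph{restriction} $(\CA'', (\delta_j)^*)$ equals $(\CA'', \kappa)$ for every $j \ge 1$ by Proposition~\ref{prop:delta}(ii), with $\exp(\CA'', \kappa) = \{e_2, \ldots, e_\ell\}$, independent of $j$.

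With these identifications in hand, each addition step is an application of Definition~\ref{def:indfree}(ii). At the step producing $(\CA, \delta_{j+1})$, the deletion $(\CA, \delta_j)$ is inductively free by the inductive hypothesis on $j$ (the base case $j = 1$ being that $\CA$ is inductively free), the restriction $(\CA'', \kappa)$ is inductively free by assumption, and the exponent-containment condition $\exp(\CA'', \delta_{j+1}^*) \subseteq \exp(\CA, \delta_j)$ reads $\{e_2, \ldots, e_\ell\} \subseteq \{j, e_2, \ldots, e_\ell\}$, which holds trivially. Hence $(\CA, \delta_{j+1}) \in \CIFM$, and by induction $(\CA, \delta) = (\CA, \delta_{m_0}) \in \CIFM$.

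The \emph{main point to verify carefully} — and the only place where any real content enters — is that the Euler restriction multiplicity $\delta_{j+1}^*$ with respect to $H_0$ genuinely equals $\kappa$ at each stage, so that the restriction remains \emph{the fixed} inductively free multiarrangement $(\CA'', \kappa)$ throughout the chain rather than changing from step to step. This is precisely the assertion recorded in Proposition~\ref{prop:delta}(ii), which identifies $(\CA'', \delta^*)$ with $(\CA'', \kappa)$ for the concentrated multiplicity, and one checks it applies verbatim with $m_0$ replaced by any $j \ge 1$. Granting this, no new computation is needed: the corollary follows formally from the definition of $\CIFM$ together with the two cited freeness statements, which is exactly why it is phrased as an easy consequence.
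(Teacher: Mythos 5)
Your proof is correct and matches the paper's argument: the paper likewise proceeds by induction on $m_0$, using Proposition~\ref{prop:delta}(ii) to identify each Euler restriction with the fixed inductively free multiarrangement $(\CA'',\kappa)$ and then repeatedly applying the addition part of Theorem~\ref{thm:add-del}. Your explicit verification that the restriction multiplicity stays equal to $\kappa$ at every step, and that the exponent containment is trivial, is exactly the content the paper compresses into one sentence.
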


\begin{proof}
The result follows readily from induction on $m_0 \ge 1$,
Proposition \ref{prop:delta}(ii) and a repeated 
application of the addition part of Theorem \ref{thm:add-del}.
\end{proof}

The following special case of Corollary \ref{cor:indfreedelta} 
is immediate from \cite[Cor.~7]{ziegler:multiarrangements}.

\begin{corollary}
\label{cor:indfreedelta3}
Suppose that the rank 3 arrangement
$\CA$ is inductively free.
Then $(\CA, \delta)$ is inductively free.
\end{corollary}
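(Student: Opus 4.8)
The goal is to prove Corollary \ref{cor:indfreedelta3}: if a rank $3$ arrangement $\CA$ is inductively free, then so is the concentrated multiarrangement $(\CA, \delta)$ for any choice of $H_0 \in \CA$ and $m_0 \in \BBZ_{\ge 1}$.

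The plan is to invoke Corollary \ref{cor:indfreedelta}, which already establishes that $(\CA, \delta)$ is inductively free provided that \emph{both} $\CA$ and $(\CA'', \kappa)$ are inductively free. Since $\CA$ is assumed inductively free by hypothesis, the only thing left to verify is that Ziegler's canonical multiarrangement $(\CA'', \kappa)$ is inductively free as well. First I would observe that $\CA''$ is the restriction of $\CA$ to the hyperplane $H_0$, and since $\CA$ has rank $3$, the restriction $\CA''$ has rank at most $2$; indeed, as $\CA$ is essential of rank $3$ and $\CA''$ lives inside the $2$-dimensional space $H_0$, it is an arrangement of rank at most $2$. Consequently the multiarrangement $(\CA'', \kappa)$ has rank at most $2$.

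The key step is then to apply \cite[Cor.~7]{ziegler:multiarrangements} (equivalently, Remark \ref{rem:rank2indfree}), which guarantees that every multiarrangement of rank at most $2$ is inductively free. This immediately yields that $(\CA'', \kappa)$ is inductively free. With both $\CA$ and $(\CA'', \kappa)$ now known to be inductively free, Corollary \ref{cor:indfreedelta} applies verbatim and delivers the desired conclusion that $(\CA, \delta)$ is inductively free.

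I do not anticipate any genuine obstacle here, as the statement is explicitly flagged in the excerpt as being ``immediate'' from \cite[Cor.~7]{ziegler:multiarrangements}. The only subtlety worth a moment's care is confirming the rank count: one must ensure that restricting a rank $3$ arrangement to one of its hyperplanes produces an arrangement of rank at most $2$, so that the rank-$2$ freeness result of Ziegler genuinely applies to $(\CA'', \kappa)$. Once this is noted, the whole argument is a two-line composition of Corollary \ref{cor:indfreedelta} with Remark \ref{rem:rank2indfree}.
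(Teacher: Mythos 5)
Your proof is correct and is exactly the paper's argument: the paper presents this corollary as a special case of Corollary \ref{cor:indfreedelta}, noting it is immediate from \cite[Cor.~7]{ziegler:multiarrangements} (i.e.\ Remark \ref{rem:rank2indfree}), since the restriction $\CA''$ of a rank $3$ arrangement has rank at most $2$, so $(\CA'', \kappa)$ is automatically inductively free. No differences to report.
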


The following is is a very useful tool for showing that a given 
multiarrangement is 
not inductively free by exhibiting a localization which fails to be  
inductively free.

\begin{theorem}
[{\cite[Thm.~1.3]{hogeroehrleschauenburg:free}}]
\label{thm:localmulti}
The class
$\CIFM$ is
closed under taking localizations.
\end{theorem}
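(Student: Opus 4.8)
The plan is to prove the following slightly more flexible statement by induction: for every inductively free multiarrangement $(\CA,\mu)$ of ambient dimension $\ell$ and every $X \in L(\CA)$, the localization $(\CA_X,\mu_X)$ is again inductively free. I would run the induction lexicographically on the pair $(\ell,|\mu|)$. The base cases are immediate: if $\ell \le 2$ then every localization has rank at most $2$ and lies in $\CIFM$ by Remark \ref{rem:rank2indfree}, and if $(\CA,\mu)=\Phi_\ell$ there is nothing to prove. For the inductive step I may assume $(\CA,\mu)\ne\Phi_\ell$, so by Definition \ref{def:indfree}(ii) there is a hyperplane $H_0\in\CA$ whose triple has deletion $(\CA',\mu')$ and restriction $(\CA'',\mu^*)$ both in $\CIFM$ with $\exp(\CA'',\mu^*)\subseteq\exp(\CA',\mu')$. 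The case $X=V$ gives $\CA_X=\Phi_\ell$ and is trivial, so I fix $X$ with $\CA_X\ne\Phi_\ell$ and split according to whether $H_0\in\CA_X$. Throughout I use that localization commutes with both operations of the triple: commutation with deletion, $((\CA_X)',(\mu_X)')=((\CA')_X,(\mu')_X)$ whenever $H_0\in\CA_X$, is immediate from the definitions (checking $\mu(H_0)=1$ and $\mu(H_0)>1$ separately), while commutation with restriction, $((\CA_X)'',(\mu_X)^*)=((\CA'')_X,(\mu^*)_X)$ including the Euler multiplicity, is the general compatibility of the Euler multiplicity with localization from \cite[Lem.~2.14]{hogeroehrleschauenburg:free}, of which Lemma \ref{lem:euler} is the special case.

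Suppose first $H_0\notin\CA_X$. Then $(\CA_X,\mu_X)=((\CA')_X,(\mu')_X)$, since deleting $H_0$ or lowering its multiplicity does not affect the hyperplanes through $X$. As $(\CA',\mu')$ lies in $\CIFM$ and has order $|\mu|-1<|\mu|$ at the same ambient dimension, the inductive hypothesis gives $((\CA')_X,(\mu')_X)\in\CIFM$, hence $(\CA_X,\mu_X)\in\CIFM$.

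Now suppose $H_0\in\CA_X$, i.e.\ $X\subseteq H_0$, and consider the triple of $(\CA_X,\mu_X)$ with respect to $H_0$. Its deletion is $((\CA')_X,(\mu')_X)$, which lies in $\CIFM$ by the inductive hypothesis (smaller order, same dimension); its restriction is $((\CA'')_X,(\mu^*)_X)$, which lies in $\CIFM$ by the inductive hypothesis applied in dimension $\ell-1$ to the inductively free multiarrangement $(\CA'',\mu^*)$. In particular both are free. To invoke Definition \ref{def:indfree}(ii) it remains to verify the exponent containment $\exp((\CA_X)'',(\mu_X)^*)\subseteq\exp((\CA_X)',(\mu_X)')$. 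I would resolve this by first noting that $(\CA_X,\mu_X)$ is free, being the localization of the free multiarrangement $(\CA,\mu)$ (the multiarrangement analogue of \cite[Thm.~4.37]{orlikterao:arrangements}), and then applying Theorems \ref{thm:restriction} and \ref{thm:restriction2} to the triple of $(\CA_X,\mu_X)$ at $H_0$: these produce a basis of $D((\CA_X)',(\mu_X)')$ a sub-collection of whose images under $f\mapsto\overline f$ is a basis of $D((\CA_X)'',(\mu_X)^*)$, which forces exactly the required nesting of exponents. With the containment in hand, Definition \ref{def:indfree}(ii) yields $(\CA_X,\mu_X)\in\CIFM$, completing the induction.

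The heart of the argument, and the step I expect to be most delicate, is precisely this exponent containment in the case $X\subseteq H_0$: inductive freeness of the deletion and restriction of the localization does not by itself guarantee that their exponents nest, and one genuinely needs the freeness of $(\CA_X,\mu_X)$ together with the basis-transfer of Theorems \ref{thm:restriction} and \ref{thm:restriction2} to pin it down. The remaining bookkeeping — that deletion lowers $|\mu|$ at fixed $\ell$ while restriction lowers $\ell$ — is exactly what makes the lexicographic induction valid and lets both appeals to the inductive hypothesis go through simultaneously.
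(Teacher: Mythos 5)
Your argument is correct, and it is essentially the proof of this result — which the present paper does not prove but quotes from \cite[Thm.~1.3]{hogeroehrleschauenburg:free} — namely: induction on $(\ell,|\mu|)$, the case split on whether $H_0\in\CA_X$, commutation of localization with deletion (immediate) and with restriction via the Euler-multiplicity compatibility \cite[Lem.~2.14]{hogeroehrleschauenburg:free} (quoted here as Lemma \ref{lem:euler}), and the exponent containment extracted from freeness of $(\CA_X,\mu_X)$ together with Theorems \ref{thm:restriction} and \ref{thm:restriction2}. The two points you use beyond what is stated in this paper are fine: that localization preserves freeness of multiarrangements (the multiarrangement analogue of Orlik--Terao's localization theorem, available in the literature and needed in the cited proof as well), and — a detail worth making explicit — that when $X\notin L(\CA')$ the set $(\CA')_X$ coincides with the localization of $\CA'$ at the genuine lattice element $\bigcap_{H\in(\CA')_X}H$, so the inductive hypothesis still applies.
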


We also require the fact that 
inductive freeness for 
multiarrangements behave well with the 
product construction. 

\begin{theorem}
[{\cite[Thm.~1.4]{hogeroehrleschauenburg:free}}]
\label{thm:products}
A product of multiarrangements belongs to 
$\CIFM$ 
if and only if 
each factor belongs to $\CIFM$.
\end{theorem}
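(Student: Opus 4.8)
The plan is to imitate the strategy used for simple arrangements in \cite[Prop.~2.10]{hogeroehrle:indfree}, running one induction for each implication on the total order $n := |\mu_1| + |\mu_2|$ of the product $(\CA_1,\mu_1)\times(\CA_2,\mu_2)$, with all the genuinely new content packed into a single structural lemma describing how the addition--deletion triple behaves under products. Fix a factor, say $\CA_2$, and a hyperplane $H_0\in\CA_2$, and let $\widetilde H_0 := V_1\oplus H_0$ be the associated hyperplane of the product. The \emph{Structural Lemma} I would establish first asserts that the triple of $(\CA_1,\mu_1)\times(\CA_2,\mu_2)$ with respect to $\widetilde H_0$ is the product of $(\CA_1,\mu_1)$ with the triple of $(\CA_2,\mu_2)$ with respect to $H_0$; concretely, the deletion equals $(\CA_1,\mu_1)\times(\CA_2',\mu_2')$ and the restriction equals $(\CA_1,\mu_1)\times(\CA_2'',\mu_2^*)$. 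The deletion claim is immediate from Definition \ref{def:Euler}. For the restriction I would first check $(\CA_1\times\CA_2)^{\widetilde H_0}=\CA_1\times\CA_2''$ by computing intersections, and then compute the Euler multiplicity $\mu^*$ flatwise: a flat $Y$ coming from $\CA_1$ has rank-two localization equal to the product of the two rank-one multiarrangements on $H\oplus V_2$ and $V_1\oplus H_0$, whose distinguished basis of Definition \ref{def:Euler} gives $\mu^*(Y)=\mu_1(H)$ independently of $\mu_2(H_0)$; whereas a flat $Y$ coming from $\CA_2$ has localization $(\CA_2)_{H'\cap H_0}$ with $\dim V_1$ extra trivial ambient directions, so its distinguished basis is that of $(\CA_2)_{H'\cap H_0}$ extended by coordinate derivations, whence $\mu^*(Y)=\mu_2^*(H'\cap H_0)$. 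Thus the restriction carries exactly the product multiplicity. I would also record the standard graded decomposition $D\big((\CA_1,\mu_1)\times(\CA_2,\mu_2)\big)\cong \big(S(V_2^*)\otimes D(\CA_1,\mu_1)\big)\oplus\big(S(V_1^*)\otimes D(\CA_2,\mu_2)\big)$, giving exponent additivity $\exp\big((\CA_1,\mu_1)\times(\CA_2,\mu_2)\big)=\exp(\CA_1,\mu_1)\sqcup\exp(\CA_2,\mu_2)$ as multisets.

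For the backward implication, assume both factors lie in $\CIFM$ and induct on $n$. If $n=0$ both factors are empty, so the product is some $\Phi_\ell$. Otherwise one factor, say $\CA_2$, is nonempty, and since $(\CA_2,\mu_2)\in\CIFM$ there is a witnessing hyperplane $H_0\in\CA_2$ as in Definition \ref{def:indfree}(ii), so that $(\CA_2',\mu_2'),(\CA_2'',\mu_2^*)\in\CIFM$ and $\exp(\CA_2'',\mu_2^*)\subseteq\exp(\CA_2',\mu_2')$. Applying the Structural Lemma at $\widetilde H_0$, the deletion and restriction of the product are again products of $\CIFM$-members, of strictly smaller total order, hence lie in $\CIFM$ by the induction hypothesis; and by exponent additivity the required inclusion $\exp(\CA_1,\mu_1)\sqcup\exp(\CA_2'',\mu_2^*)\subseteq\exp(\CA_1,\mu_1)\sqcup\exp(\CA_2',\mu_2')$ follows by adjoining $\exp(\CA_1,\mu_1)$ to the inclusion for $\CA_2$. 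Definition \ref{def:indfree}(ii) then places the product in $\CIFM$.

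For the forward implication, assume the product lies in $\CIFM$ and again induct on $n$; the case $n=0$ is trivial. Otherwise choose a witnessing hyperplane of the product as in Definition \ref{def:indfree}(ii). Every hyperplane of a product belongs to exactly one factor, say to $\CA_2$, so it is of the form $\widetilde H_0=V_1\oplus H_0$ with $H_0\in\CA_2$. By the Structural Lemma the deletion $(\CA_1,\mu_1)\times(\CA_2',\mu_2')$ and restriction $(\CA_1,\mu_1)\times(\CA_2'',\mu_2^*)$ are products lying in $\CIFM$; both have total order strictly below $n$ (for the restriction this uses that in the relation $\exp(\CA,\mu)=\{b_1,\dots,b_\ell\}$, $\exp(\CA',\mu')=\{b_1,\dots,b_{\ell-1},b_\ell-1\}$ of Theorem \ref{thm:add-del} the top exponent satisfies $b_\ell\ge 1$, since exponents are nonnegative, so $|\mu^*|=|\mu|-b_\ell<|\mu|$). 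The induction hypothesis applied to the deletion and to the restriction yields $(\CA_1,\mu_1)\in\CIFM$ together with $(\CA_2',\mu_2'),(\CA_2'',\mu_2^*)\in\CIFM$; and cancelling the common multiset summand $\exp(\CA_1,\mu_1)$ from the product's exponent inclusion gives $\exp(\CA_2'',\mu_2^*)\subseteq\exp(\CA_2',\mu_2')$. Definition \ref{def:indfree}(ii) then shows $(\CA_2,\mu_2)\in\CIFM$, and both factors are inductively free.

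The main obstacle is the Structural Lemma, and within it the compatibility of the Euler multiplicity with products. Everything downstream is bookkeeping with Definition \ref{def:indfree}, exponent additivity, and multiset cancellation, but verifying $\mu^*$ through Definition \ref{def:Euler} forces one to identify the distinguished rank-two bases for the two types of flats of $(\CA_1\times\CA_2)^{\widetilde H_0}$ and, crucially, to show that adjoining trivial ambient directions leaves the Euler multiplicity unchanged. This last invariance is the delicate point I expect to require the most care.
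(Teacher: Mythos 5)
Your proposal is correct and takes essentially the same approach as the actual proof of this statement, which the paper does not prove itself but cites from \cite[Thm.~1.4]{hogeroehrleschauenburg:free}: there, too, the crux is the product-compatibility of the addition--deletion triple (deletion and restriction of the product at $V_1\oplus H_0$ being the product of the untouched factor with the triple of the other factor, with the Euler multiplicity checked on rank-two localizations, where padding by trivial ambient directions leaves it unchanged), combined with exponent additivity from the product decomposition of the derivation module and a two-way induction on the total order $|\mu_1|+|\mu_2|$. No gaps to report.
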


We close this section with an example which shows that 
the reverse implication of Corollary \ref{cor:kappa2}
fails in general.

\begin{example}
\label{ex:kappa2}
Consider the complex $4$-arrangement $\CA$ given 
by the defining polynomial
$Q(\CA) := x y z t (x-y)(x-z)(x-y+t)(x-z+t)$.
Then one checks that  
 $\CA^{H_t}$ is inductively free,
 while both $\CA$ and $(\CA^{H_t},\kappa)$
are not free.
\end{example}

\section{Filtrations of Free Multiplicities}
\label{sec:filtrations}

In the sequel, we denote by $\one$ the simple multiplicity on a given arrangement. 
Recall the partial order on the set of multiplicities on an arrangement from 
Definition \ref{def:order}.

\begin{defn}
\label{def:freefiltration}
Let $\CA$ be a free arrangement. 
Suppose there is a free multiplicity $\mu > \one$ on $\CA$ 
such that 
there is a sequence of free multiplicities $\mu_i$ on $\CA$ satisfying
$\mu_i < \mu_{i+1}$ and $|\mu_{i+1}| = |\mu_i| +1$,
for $i = 1, \ldots, n-1$, where $\mu_1 := {\one}$ and $\mu_n := \mu$.
Then we say that the sequence of multiarrangements 
$(\CA, \mu_i)$ is a \emph{free filtration} of $(\CA, \mu)$ or
simply that the sequence 
$\mu_i$ is a \emph{filtration} of free multiplicities on $\CA$.
\end{defn}

\begin{lemma}
\label{lem:indfreechain}
Let $\CA$ be an inductively free arrangement. 
Suppose there is a free multiplicity $\mu > \one$ on $\CA$ 
along with a free filtration  
$\one = :\mu_1 < \ldots < \mu_n := \mu$ 
of $(\CA, \mu)$ as in Definition \ref{def:freefiltration}.
If each restriction  
along the free filtration is inductively free, then 
so is $(\CA, \mu)$.
\end{lemma}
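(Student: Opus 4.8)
The plan is to prove, by induction on $i$, that each multiarrangement $(\CA, \mu_i)$ is inductively free; the case $i = n$ is the assertion. The base case $i = 1$ is $(\CA, \mu_1) = (\CA, \one)$, which belongs to $\CIFM$ because $\CA$ is an inductively free simple arrangement. This is the only place the hypothesis $\CA \in \CIF$ enters, and it serves merely to anchor the induction at the bottom of the filtration.

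For the inductive step, I would assume $(\CA, \mu_i) \in \CIFM$ and examine the transition from $\mu_i$ to $\mu_{i+1}$. Since $|\mu_{i+1}| = |\mu_i| + 1$ and $\mu_i < \mu_{i+1}$, the two multiplicities coincide except at a single hyperplane $H_0 \in \CA$, where $\mu_{i+1}(H_0) = \mu_i(H_0) + 1$. As every $\mu_j$ dominates $\mu_1 = \one$, we have $\mu_{i+1}(H_0) \ge 2$; hence, in the triple of $(\CA, \mu_{i+1})$ relative to $H_0$ (Definition \ref{def:Euler}), the deletion is of the multiplicity-reducing kind, so $(\CA', \mu_{i+1}') = (\CA, \mu_i)$, which lies in $\CIFM$ by the inductive hypothesis. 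The restriction $(\CA'', \mu_{i+1}^*)$ is inductively free by hypothesis, being one of the restrictions occurring along the free filtration.

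It then remains to verify the inclusion $\exp(\CA'', \mu_{i+1}^*) \subseteq \exp(\CA', \mu_{i+1}')$ demanded by Definition \ref{def:indfree}(ii). Here the freeness of the whole filtration, rather than merely the inductive freeness of the two smaller multiarrangements, becomes decisive: both $(\CA, \mu_{i+1})$ and its deletion $(\CA, \mu_i)$ are free by assumption. Theorem \ref{thm:restriction} therefore furnishes a homogeneous basis $\theta_1, \ldots, \theta_\ell$ of $D(\CA, \mu_i)$ such that replacing one member $\theta_k$ by $\alpha_0 \theta_k$ yields a basis of $D(\CA, \mu_{i+1})$, and Theorem \ref{thm:restriction2} then exhibits the reductions $\overline\theta_j$ with $j \ne k$ as a basis of $D(\CA'', \mu_{i+1}^*)$. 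Comparing polynomial degrees gives $\exp(\CA'', \mu_{i+1}^*) = \exp(\CA, \mu_i) \setminus \{\pdeg \theta_k\} \subseteq \exp(\CA, \mu_i) = \exp(\CA', \mu_{i+1}')$. With both the deletion and the restriction inductively free and this inclusion in hand, Definition \ref{def:indfree}(ii) yields $(\CA, \mu_{i+1}) \in \CIFM$, closing the induction.

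The step I expect to carry the real weight is the exponent inclusion, together with the observation that it cannot be deduced from the inductive freeness of the deletion and the restriction in isolation. The crux is that the free filtration delivers the freeness of $(\CA, \mu_{i+1})$ at no extra cost, so that Theorems \ref{thm:restriction} and \ref{thm:restriction2} pin down $\exp(\CA'', \mu_{i+1}^*)$ as a sub-multiset of $\exp(\CA, \mu_i)$; without that input the required inclusion could fail and the induction would break down. A second point meriting attention is the base case: one must know that inductive freeness of the simple arrangement $\CA$ transfers to the trivial multiplicity $(\CA, \one)$ as a member of $\CIFM$, since in the multiarrangement reduction the associated restriction carries Ziegler's canonical multiplicity rather than the simple one.
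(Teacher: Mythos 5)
Your proof is correct and follows essentially the same route as the paper's: an induction up the filtration in which the deletion at each step is the previous member of the filtration, the restriction is inductively free by hypothesis, and the required exponent inclusion is extracted from the freeness of two consecutive filtration members. The only cosmetic difference is that you obtain the inclusion $\exp(\CA'',\mu_{i+1}^*) \subseteq \exp(\CA,\mu_i)$ by unwinding Theorems \ref{thm:restriction} and \ref{thm:restriction2} explicitly, whereas the paper quotes Theorem \ref{thm:restriction} and then the implication (i)$+$(ii)$\Rightarrow$(iii) of Theorem \ref{thm:add-del}; these are the same argument.

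One correction to your closing remark about the base case: the Euler restriction of the simple multiarrangement $(\CA,\one)$ carries the \emph{simple} multiplicity on $\CA''$, not Ziegler's $\kappa$. Indeed, for $\mu \equiv \one$ one checks directly that $\mu^* \equiv \one$ (for a rank-two localization with $k$ lines and $H_0 = \ker\alpha_0$, the pair $\theta_E$, $\alpha_0\eta$ with $\eta$ the degree-$(k-2)$ basis element of the deleted localization is a basis of the required form, so $\mu^*(Y) = \pdeg\theta_E = 1$); this is precisely why Theorem \ref{thm:add-del} generalizes Terao's Theorem \ref{thm:add-del-simple}. Ziegler's $\kappa$ arises instead as the Euler restriction of the concentrated multiplicity $\delta_{H_0,2}$, cf.\ Proposition \ref{prop:delta}(ii) and the proof of Corollary \ref{cor:multifreetuberestrictions}. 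Consequently the recursions defining $\CA \in \CIF$ and $(\CA,\one) \in \CIFM$ involve literally the same deletions, restrictions and exponent conditions, so the base case is genuinely routine. This matters: had your version been correct, with restrictions of $(\CA,\one)$ carrying $\kappa$, the transfer $\CA \in \CIF \Rightarrow (\CA,\one) \in \CIFM$ would amount to an affirmative answer to Question \ref{qu:kappa} along an entire chain of subarrangements, and both your proof and the paper's would have a serious gap at the bottom of the induction.
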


\begin{proof}
It follows from Theorem \ref{thm:restriction} 
that the exponents of $(\CA', \mu_i') = (\CA, \mu_{i-1})$ and of $(\CA, \mu_i)$
differ in precisely one entry by $1$.  
So that, by Theorem \ref{thm:add-del}, 
the exponents of each restriction along the free filtration satisfy 
$\exp(\CA'', \mu_i^*) \subseteq \exp (\CA', \mu_i')$.
Now, since $\CA$ as well as each restriction 
$(\CA'', \mu_i^*)$ is inductively free,
a repeated 
application of the addition part of Theorem \ref{thm:add-del}
gives the desired result.
\end{proof}

We record a special case of Lemma \ref{lem:indfreechain}.

\begin{corollary}
\label{cor:indfreechain}
Let $\CA$ be an inductively free $3$-arrangement. 
Suppose there is a free multiplicity $\mu \ge \one$ on $\CA$ 
along with a free filtration as in Lemma \ref{lem:indfreechain}.
Then $(\CA, \mu)$ is inductively free.
\end{corollary}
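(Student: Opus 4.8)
The plan is to reduce Corollary \ref{cor:indfreechain} to the already-proved Lemma \ref{lem:indfreechain} by checking that its hypotheses are automatically satisfied when $\CA$ is a rank $3$ arrangement. Lemma \ref{lem:indfreechain} requires three ingredients: that $\CA$ is inductively free, that there is a free filtration of $(\CA,\mu)$, and that \emph{each restriction} $(\CA'',\mu_i^*)$ along the filtration is inductively free. The first two are given directly in the hypothesis of the corollary, so the only thing that needs justification is the third, namely the inductive freeness of the restrictions.

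The key observation is that when $r(\CA) = 3$, every restriction $\CA'' = \CA^{H_0}$ to a hyperplane $H_0$ has rank at most $2$. Consequently, for any multiplicity $\mu_i^*$ on $\CA''$, the multiarrangement $(\CA'', \mu_i^*)$ is of rank at most $2$. By Remark \ref{rem:rank2indfree} (which records the rank $\le 2$ case of \cite[Cor.~7]{ziegler:multiarrangements}), every such multiarrangement is automatically inductively free. Thus the restriction hypothesis of Lemma \ref{lem:indfreechain} holds for free, with no work required.

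First I would note that the statement is phrased with $\mu \ge \one$ rather than $\mu > \one$; in the degenerate case $\mu = \one$ the multiarrangement $(\CA,\mu) = (\CA,\one)$ is just the simple arrangement $\CA$, which is inductively free by hypothesis, so there is nothing to prove. Assuming therefore $\mu > \one$, I would invoke the free filtration $\one =: \mu_1 < \ldots < \mu_n := \mu$ supplied in the hypothesis, observe as above that each $(\CA'', \mu_i^*)$ has rank $\le 2$ and hence lies in $\CIFM$, and then simply apply Lemma \ref{lem:indfreechain} to conclude that $(\CA,\mu)$ is inductively free.

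I do not expect any genuine obstacle here: the corollary is essentially a specialization in which the rank $3$ hypothesis trivializes the most demanding condition of the parent lemma. The only point meriting a moment of care is the bookkeeping that a rank $3$ arrangement restricts to rank $\le 2$, together with the harmless handling of the boundary case $\mu = \one$ arising from the slightly weaker inequality in the statement.
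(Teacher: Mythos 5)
Your proposal is correct and follows essentially the same route as the paper: the rank $3$ hypothesis forces every restriction along the filtration to have rank at most $2$, hence to be inductively free by Remark \ref{rem:rank2indfree}, and the conclusion then follows from Lemma \ref{lem:indfreechain}. Your extra handling of the boundary case $\mu = \one$ is a harmless refinement the paper leaves implicit.
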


\begin{proof}
Since each restriction along the free filtration is 
of rank $2$, it is already inductively free, by Remark \ref{rem:rank2indfree}.
So the result is immediate from Corollary \ref{cor:indfreechain}.
\end{proof}

In our next result we present a mild condition on a free multiplicity $\mu$ 
of a free arrangement $\CA$ which implies that every intermediate 
multiplicity $\one < \nu < \mu$ is also free.

\begin{lemma} 
\label{lem:multifreetube}
Let $\CA$ be a free arrangement with exponents $1 \le e_2 \le \ldots \le e_\ell$.
Assume that there is a free multiplicity $\mu > \one$ on $\CA$ with
$\exp(\CA,\mu) = \{e, e_2,\ldots,e_\ell\}$, where   
$e = 1 + \vert \mu \vert - \vert \CA \vert$.
Suppose that $\vert \mu \vert - \vert \CA \vert \ge e_\ell$. 
Let $\nu$ be a multiplicity satisfying $\one < \nu <  \mu$.
Then $(\CA,\nu)$ is free with 
$\exp(\CA,\nu) = \{\tilde e , e_2, \ldots, e_\ell\}$, where
$\tilde e = 1+ \vert \nu \vert - \vert \CA \vert$.

Moreover, if $\theta_1,\ldots, \theta_\ell$ 
is a homogeneous basis of $D(\CA,\mu)$ with $\pdeg \theta_i = e_i$ for 
$i=2,\ldots,\ell$ and $\pdeg \theta_1 = e$, then
\begin{equation}
\label{eq:basis}
\left(\prod_{H \in \CA}\alpha_H^{\nu(H)-1}\right) \theta_E,\theta_2,\ldots,\theta_\ell
\end{equation}
is a basis of $D(\CA,\nu)$.
\end{lemma}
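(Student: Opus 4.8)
The plan is to prove the two assertions of Lemma~\ref{lem:multifreetube} together, using Ziegler's analogue of Saito's criterion (Theorem~\ref{thm:ziegler-saito}) as the central tool. The key observation is that the candidate basis in \eqref{eq:basis} differs from the given basis $\theta_1,\ldots,\theta_\ell$ of $D(\CA,\mu)$ only in the first slot: we replace the low-degree derivation $\theta_1$ by the explicit derivation $\left(\prod_{H\in\CA}\alpha_H^{\nu(H)-1}\right)\theta_E$. Since $\theta_E$ is the Euler derivation, multiplying it by $\prod_{H}\alpha_H^{\nu(H)-1}$ produces a homogeneous derivation whose polynomial degree is exactly $1+\sum_{H}(\nu(H)-1)=1+|\nu|-|\CA|=\tilde e$. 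First I would verify membership and degrees, then independence, and finally invoke \eqref{eq:exp} to conclude freeness with the asserted exponents.

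\textbf{Membership in $D(\CA,\nu)$.}
First I would check that each of the listed derivations lies in $D(\CA,\nu)$. For $i=2,\ldots,\ell$ this is where the hypothesis $\nu<\mu$ is essential: since $\theta_i\in D(\CA,\mu)$ we have $\theta_i(\alpha_H)\in\alpha_H^{\mu(H)}S\subseteq\alpha_H^{\nu(H)}S$ for every $H$, because $\nu(H)\le\mu(H)$; hence $\theta_i\in D(\CA,\nu)$. For the first derivation, note $\theta_E(\alpha_H)=\alpha_H$ for each $H$, so $\left(\prod_{H'}\alpha_{H'}^{\nu(H')-1}\right)\theta_E(\alpha_H)$ is divisible by $\alpha_H^{\nu(H)-1}\cdot\alpha_H=\alpha_H^{\nu(H)}$, using $\nu(H)-1\ge 0$ (which holds because $\nu\ge\one$). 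Thus the first derivation lies in $D(\CA,\nu)$ as well.

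\textbf{Independence and the degree count.}
The main point is linear independence over $S$ together with the degree identity \eqref{eq:exp}. For independence I would argue as follows: $\theta_2,\ldots,\theta_\ell$ are part of a basis of $D(\CA,\mu)$, hence independent over $S$; since the first derivation is a nonzero scalar multiple of $\theta_E$ times a nonzero polynomial, it suffices to know that $\theta_E$ is not in the $S$-span of $\theta_2,\ldots,\theta_\ell$. This follows because $\{\theta_1,\ldots,\theta_\ell\}$ is a basis and $\theta_1$ may be taken to involve $\theta_E$ nontrivially; more robustly, one computes that the coefficient matrix of the new family has determinant equal, up to a nonzero scalar, to $\left(\prod_H\alpha_H^{\nu(H)-1}\right)$ times $\det M(\theta_E,\theta_2,\ldots,\theta_\ell)$, and the latter is $\dot=\,Q(\CA)=\prod_H\alpha_H$, a nonzero polynomial. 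Multiplying, the determinant of the new coefficient matrix is $\dot=\prod_H\alpha_H^{\nu(H)}=Q(\CA,\nu)$, which by Theorem~\ref{thm:ziegler-saito} gives simultaneously the independence and the conclusion that the family is a basis of $D(\CA,\nu)$. The degrees are then read off: the first has polynomial degree $\tilde e=1+|\nu|-|\CA|$ and the remaining have degrees $e_2,\ldots,e_\ell$, so $\exp(\CA,\nu)=\{\tilde e,e_2,\ldots,e_\ell\}$.

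\textbf{Expected obstacle.}
I expect the main obstacle to be establishing the determinant identity cleanly, in particular justifying that $\det M(\theta_E,\theta_2,\ldots,\theta_\ell)\;\dot=\;Q(\CA)$. This is where the hypothesis $|\mu|-|\CA|\ge e_\ell$ together with $e=1+|\mu|-|\CA|$ should enter: it guarantees $e\ge 1+e_\ell\ge e_i$ for all $i$, so $\theta_1$ is the unique derivation of top degree, and one can use the addition/deletion machinery (Theorem~\ref{thm:add-del}) or a direct factorization argument to show that a low-degree generator of $D(\CA)$, namely $\theta_E$, together with $\theta_2,\ldots,\theta_\ell$ forms a basis of $D(\CA)$ with the correct Saito determinant $\dot=Q(\CA)$. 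Once this simple-arrangement statement is in hand, the multiarrangement conclusion follows formally by the scaling computation above, and \eqref{eq:basis} is verified.
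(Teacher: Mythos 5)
Your reduction coincides with the paper's: everything hinges on the single claim that $\theta_E,\theta_2,\ldots,\theta_\ell$ is a basis of $D(\CA)$ (equivalently, $\det M(\theta_E,\theta_2,\ldots,\theta_\ell)\ \dot=\ Q(\CA)$); granted this, your membership check, the scaling of the determinant by $\prod_{H}\alpha_H^{\nu(H)-1}$, and Theorem~\ref{thm:ziegler-saito} finish the proof exactly as you say. But you never prove that claim --- you explicitly defer it as the ``expected obstacle'' and gesture at ``the addition/deletion machinery or a direct factorization argument''. This is a genuine gap, and it is the entire mathematical content of the lemma: intermediate multiplicities of a free multiplicity need not be free in general, so the claim cannot follow from formal considerations that ignore the hypothesis $|\mu|-|\CA|\ge e_\ell$, and it is not clear how Theorem~\ref{thm:add-del} would be brought to bear. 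Moreover, the substitute observation you offer --- that $e\ge 1+e_\ell$ makes $\theta_1$ the unique top-degree basis element --- does not imply that $\theta_E$ together with $\theta_2,\ldots,\theta_\ell$ spans $D(\CA)$. (A smaller point: ``$\theta_E$ is not in the $S$-span of $\theta_2,\ldots,\theta_\ell$'' is weaker than what independence requires; one must exclude relations $p\,\theta_E=\sum_k p_k\theta_k$ with $p\ne 0$, i.e.\ work over the fraction field, though your determinant formulation would bypass this if the determinant were actually computed.)

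For comparison, here is how the paper closes exactly this gap, and where the hypothesis enters. Since the degrees of $\theta_E,\theta_2,\ldots,\theta_\ell$ sum to $1+(|\CA|-1)=|\CA|$, by \cite[Thm.~4.42]{orlikterao:arrangements} it suffices to show $\theta_i\notin S\theta_E+S\theta_2+\cdots+S\theta_{i-1}$ for $i=2,\ldots,\ell$. If this fails for some $i$, there are polynomials with
\[
p\,\theta_E=\sum_{k=2}^{i-1}p_k\theta_k+\theta_i,
\]
and $p\ne0$ because $\theta_2,\ldots,\theta_i$ are independent over $S$. Applying both sides to $\alpha_H$ and using that each $\theta_k$, $k\ge 2$, lies in $D(\CA,\mu)$ (not merely in $D(\CA)$), one gets $p\,\alpha_H\in\alpha_H^{\mu(H)}S$ for every $H\in\CA$, hence $\prod_{H\in\CA}\alpha_H^{\mu(H)-1}$ divides $p$ and $\deg p\ge|\mu|-|\CA|$. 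Then
\[
\pdeg\theta_i=1+\deg p\ \ge\ 1+|\mu|-|\CA|\ >\ |\mu|-|\CA|\ \ge\ e_\ell\ \ge\ \pdeg\theta_i,
\]
a contradiction; this is precisely where $|\mu|-|\CA|\ge e_\ell$ is used. Note that the argument exploits the $\mu$-divisibility satisfied by $\theta_2,\ldots,\theta_\ell$, a feature invisible in a purely degree-theoretic or addition-deletion approach --- which is why the routes you sketch do not obviously work. If you supply this divisibility argument, your write-up becomes a complete proof along the same lines as the paper's.
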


\begin{proof}
Let $\theta_1,\ldots,\theta_\ell$ be a homogeneous basis of $D(\CA,\mu)$ 
with $\pdeg \theta_i = e_i$ for 
$i=2,\ldots,\ell$ and 
$\pdeg \theta_1 = e = 1+\vert \mu \vert - \vert \CA \vert$. 
It is enough to show that $\theta_E,\theta_2,\ldots,\theta_\ell$ 
is a basis of $D(\CA)$. 
For, then the derivations given
in \eqref{eq:basis} are linearly 
independent over $S$ and obviously also members 
of $D(\CA,\nu)$ 
(as $\nu \le \mu$) so that
the sum over their polynomial degrees equals $\vert \nu \vert$,
as $\sum_{i=2}^\ell e_i = |\CA| -1$.
We are going to check that the conditions of 
\cite[Thm.~4.42]{orlikterao:arrangements} hold. For that we have to show that 
\begin{equation}
\label{eq:theta1}
\theta_i \not \in S \theta_E + S\theta_2 + \cdots + S \theta_{i-1}
\end{equation}
holds for $i=2,\ldots,\ell$. 
Assume that \eqref{eq:theta1} fails for some $i$. Then there are 
polynomials $p,p_2,\ldots,p_{i-1} \in S$ such that
\begin{equation}
\label{eq:theta2}
p \, \theta_E = \sum_{k=2}^{i-1} p_k \theta_k + \theta_i.
\end{equation}
Consequently, since $\theta_2,\ldots,\theta_i$ are 
linearly independent over $S$, 
$p \not = 0$.
Now applying \eqref{eq:theta2} to  $\alpha_H$
for $H \in \CA$, we get that 
$ p \cdot \alpha_H \in \alpha_H^{\mu(H)} S$. Hence
\[
  p \in \left(\prod_{H \in \CA} \alpha_H^{\mu(H)-1}\right) S.
\]
Therefore, 
$\deg p \ge \sum_{H \in \CA} (\mu(H)-1) = \vert \mu \vert - \vert \CA \vert$. 
By \eqref{eq:theta2}, we have 
$1 + \deg p = \pdeg (p \, \theta_E) = \pdeg \theta_i$.
Using the hypothesis $\vert \mu \vert - \vert \CA \vert \ge e_\ell$,
we get
\[
\pdeg \theta_i \ge 1 + \vert \mu \vert - \vert \CA \vert 
  > \vert \mu \vert - \vert \CA \vert \ge e_\ell 
= \pdeg \theta_\ell \ge \pdeg \theta_i,
\] 
which is absurd.
Consequently, \eqref{eq:theta1} holds for all $i =2,\ldots,\ell$ and so 
$\theta_E,\theta_2,\ldots,\theta_\ell$ is a basis of $D(\CA)$, as claimed, by
\emph{loc.~cit.}
\end{proof}

We record an important consequence of Lemma \ref{lem:multifreetube}
and Theorem \ref{thm:add-del} which shows that 
the multisets of exponents of the restrictions 
along a free filtration, as in Lemma \ref{lem:multifreetube}, 
are constant and do not depend on 
the multiplicities $\mu^*_i$.

\begin{corollary}
\label{cor:multifreetuberestrictions}
Let $\CA$ and $(\CA, \mu)$ be as in Lemma \ref{lem:multifreetube}.
Let $\one = :\mu_1 < \ldots < \mu_n := \mu$ be 
a free filtration of $(\CA, \mu)$ as in Definition \ref{def:freefiltration}.
Then for each restriction, we have $(\CA'', \mu^*_i) = (\CA'',\kappa)$,
where $\kappa$ is Ziegler's canonical multiplicity on $\CA''$.
In particular, each such restriction along the filtration is free with 
$\exp (\CA'', \mu^*_i) = \exp(\CA'',\kappa) = \{e_2, \ldots, e_\ell\}$, 
where $\exp \CA =\{1, e_2, \ldots, e_\ell\}$.
\end{corollary}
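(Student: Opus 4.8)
The plan is to combine the explicit bases furnished by Lemma~\ref{lem:multifreetube} with the restriction machinery of Theorems~\ref{thm:restriction} and~\ref{thm:restriction2}, using the simple multiplicity $\one$ as an anchor to identify the Euler multiplicity at each step with $\kappa$. Note first that the exponent assertion alone is cheap: fixing a step $i$ and letting $H_0$ be the hyperplane whose multiplicity is raised in passing from $\mu_{i-1}$ to $\mu_i$ (so $\CA'' = \CA^{H_0}$ and the restriction in question is $(\CA'',\mu_i^*)$), Lemma~\ref{lem:multifreetube} gives that $(\CA,\mu_{i-1})$ and $(\CA,\mu_i)$ are free with exponents $\{\,i-1,e_2,\ldots,e_\ell\}$ and $\{\,i,e_2,\ldots,e_\ell\}$, so the addition--deletion Theorem~\ref{thm:add-del} forces $\exp(\CA'',\mu_i^*) = \{e_2,\ldots,e_\ell\}$. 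The real work is the pointwise equality $\mu_i^* = \kappa$; I would not try to read this off Proposition~\ref{prop:Euler}(3) directly, since that needs the local bound $|(\mu_i)_X|\le 2|\CA_X|-1$, which does not follow from the hypotheses (only $|(\mu_i)_X|\le|\mu_X|$ is available, and $\mu$ may be large locally). Instead I would argue through a common basis.

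Here the structure of Lemma~\ref{lem:multifreetube} is decisive. For every $\nu$ with $\one\le\nu\le\mu$ that lemma provides a basis $\{\phi_\nu,\theta_2,\ldots,\theta_\ell\}$ of $D(\CA,\nu)$, where $\phi_\nu = \big(\prod_{H}\alpha_H^{\nu(H)-1}\big)\theta_E$ and $\theta_2,\ldots,\theta_\ell$ is the \emph{fixed} partial basis of $D(\CA)$ produced in the proof, independent of $\nu$. Since $\mu_i=\mu_{i-1}$ off $H_0$ and $\mu_i(H_0)=\mu_{i-1}(H_0)+1$, we have $\phi_{\mu_i}=\alpha_0\phi_{\mu_{i-1}}$ with $\alpha_0=\alpha_{H_0}$. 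Thus the explicit bases $\{\phi_{\mu_{i-1}},\theta_2,\ldots,\theta_\ell\}$ of $D(\CA',\mu_i')=D(\CA,\mu_{i-1})$ and $\{\phi_{\mu_i},\theta_2,\ldots,\theta_\ell\}$ of $D(\CA,\mu_i)$ already realize the conclusion of Theorem~\ref{thm:restriction}, with the distinguished element being $\phi_{\mu_{i-1}}$. Theorem~\ref{thm:restriction2} then shows that $\{\overline{\theta_2},\ldots,\overline{\theta_\ell}\}$ is a basis of $D(\CA'',\mu_i^*)$, where $\overline\theta(\overline f):=\overline{\theta(f)}$; in particular \eqref{eq:ziegler-saito} gives $Q(\CA'',\mu_i^*)\ \dot=\ \det M(\overline{\theta_2},\ldots,\overline{\theta_\ell})$.

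The crucial point is to recognise this \emph{same} collection as a basis of $D(\CA'',\kappa)$. I would anchor at $\one$: for $X\in\CA''$ with $k=|\CA_X|$ one has $|\one_X|=k\le 2k-1$ and $\one(H_0)=1>0$, so Proposition~\ref{prop:Euler}(3) yields $\one^*(X)=k-1=\kappa(X)$, i.e.\ $(\CA'',\one^*)=(\CA'',\kappa)$. Since $\theta_2,\ldots,\theta_\ell\in D(\CA)=D(\CA,\one)$ and the map $\theta\mapsto\overline\theta$ is the same irrespective of the multiplicity, the derivations $\overline{\theta_2},\ldots,\overline{\theta_\ell}$ lie in $D(\CA'',\one^*)=D(\CA'',\kappa)$. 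They are homogeneous with $\pdeg\overline{\theta_j}=\pdeg\theta_j=e_j$ (restriction preserves polynomial degree, and they are nonzero as a basis above), linearly independent over $\overline S$, and $\sum_{j=2}^\ell e_j=|\CA|-1=|\kappa|$ because $\exp(\CA'',\kappa)=\{e_2,\ldots,e_\ell\}$ by Theorem~\ref{thm:zieglermulti}. Hence, by the homogeneous form \eqref{eq:exp} of Ziegler's criterion, $\{\overline{\theta_2},\ldots,\overline{\theta_\ell}\}$ is a basis of $D(\CA'',\kappa)$ too.

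Finally I would compare defining polynomials: applying \eqref{eq:ziegler-saito} to both bases gives $Q(\CA'',\mu_i^*)\ \dot=\ \det M(\overline{\theta_2},\ldots,\overline{\theta_\ell})\ \dot=\ Q(\CA'',\kappa)$, and since a free multiarrangement is recovered from its defining polynomial, this forces $\mu_i^*=\kappa$, i.e.\ $(\CA'',\mu_i^*)=(\CA'',\kappa)$. The exponent statement $\exp(\CA'',\mu_i^*)=\{e_2,\ldots,e_\ell\}$ is then immediate. I expect the main obstacle to be exactly the bookkeeping that the single family $\overline{\theta_2},\ldots,\overline{\theta_\ell}$ serves \emph{simultaneously} as a basis for the Euler restriction $(\CA'',\mu_i^*)$ and for Ziegler's $(\CA'',\kappa)$; once the shared basis is in hand, everything reduces to the quoted addition--restriction theorems together with Proposition~\ref{prop:Euler}(3), which supplies the anchor $\one^*=\kappa$.
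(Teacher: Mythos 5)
Your proof is correct and follows essentially the same route as the paper: both exploit the $\nu$-independent partial basis $\theta_2,\ldots,\theta_\ell$ furnished by Lemma \ref{lem:multifreetube}, apply Theorem \ref{thm:restriction2} to the consecutive filtration step $\mu_{i-1} < \mu_i$ to see that $\overline\theta_2,\ldots,\overline\theta_\ell$ is a basis of $D(\CA'',\mu_i^*)$, show that this \emph{same} family is also a basis of $D(\CA'',\kappa)$, and then conclude $\mu_i^* = \kappa$ from \eqref{eq:ziegler-saito}. The only divergence is the middle step: the paper obtains the $D(\CA'',\kappa)$-basis by a second application of Theorem \ref{thm:restriction2}, to the pair $\one < \delta_{H_0,2}$ together with Proposition \ref{prop:delta}(ii), whereas you anchor at the simple multiplicity --- Proposition \ref{prop:Euler}(3) gives $\one^* = \kappa$ --- and verify the basis property directly via the degree count in Ziegler's criterion (Theorem \ref{thm:ziegler-saito}); both variants are sound.
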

 
\begin{proof}
Let $H_0 \in \CA$ and let $\one \le \nu' < \nu \le \mu$ be free multiplicities as given by
Lemma \ref{lem:multifreetube}, where $(\CA', \nu')$ is the deletion of $(\CA, \nu)$ with 
respect to $H_0$.
Let $\theta_1,\theta_2,\ldots,\theta_\ell$ be a free basis of $D(\CA', \nu')$.
It follows from 
Lemma \ref{lem:multifreetube} that 
$\alpha_0\theta_1,\theta_2,\ldots,\theta_\ell$ is a free basis of $D(\CA, \nu)$, as given by 
\eqref{eq:basis},
where $\ker \alpha_0 = H_0$. 
Owing to 
Theorem \ref{thm:restriction2}, we see that $\overline \theta_2 ,\ldots,\overline \theta_\ell$ is a basis
of $D(\CA'',\nu^*)$. 

Now let $\delta$ be the concentrated multiplicity on $\CA$ given by $\delta(H_0) = 2$.
Then $\one < \delta \le \mu$. Hence, according to Lemma \ref{lem:multifreetube}, 
$(\CA, \delta)$ is free with
 $\alpha_0\theta_E,\theta_2,\ldots,\theta_\ell$ being a basis of $D(\CA, \delta)$,
by \eqref{eq:basis}.
Thanks to Proposition \ref{prop:delta}, $(\CA'', \delta^*) = (\CA'', \kappa)$.

Since $\theta_E,\theta_2,\ldots,\theta_\ell$ is a basis of $D(\CA)$,
it follows again from Theorem \ref{thm:restriction2}  
applied to $(\CA, \delta)$ and $(\CA', \delta') = (\CA, \one)$,
that $\overline\theta_2,\ldots,\overline\theta_\ell$ is also a basis of 
$D(\CA'', \kappa)$. Hence, thanks to \eqref{eq:ziegler-saito}, $\nu^* = \kappa$.
\end{proof}

Next we apply Lemma \ref{lem:multifreetube} in the context of 
Ziegler's canonical multiplicity $\kappa$.

\begin{corollary}
\label{cor:multifreetube}
Let $\CA$ be a free arrangement with exponents 
$1 = e_1 \le e_2 \le \cdots \le e_{\ell-1} < e_\ell$ and
let $H \in \CA$ such that $\CA^H$ is free with 
$\exp(\CA^H) = \{1,e_2,\ldots,e_{\ell-1}\}$. 
Then $(\CA^H,\mu)$ is free for every multiplicity
$\one < \mu < \kappa$ with 
$\exp(\CA^H,\mu) = \{1+\vert \mu \vert-\vert \CA^H \vert,e_2,\ldots,e_{\ell-1}\}$.
\end{corollary}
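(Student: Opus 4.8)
The statement of Corollary~\ref{cor:multifreetube} is an application of Lemma~\ref{lem:multifreetube} with the free arrangement taken to be $\CA^H$ and the distinguished free multiplicity taken to be $\kappa$, the canonical Ziegler multiplicity on $\CA^H$. So the proof proceeds by verifying the hypotheses of Lemma~\ref{lem:multifreetube} for the pair $(\CA^H,\kappa)$ and then reading off the conclusion.

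First I would record the relevant exponent and cardinality data. By Theorem~\ref{thm:zieglermulti} (equivalently Proposition~\ref{prop:delta}(ii)), since $\CA$ is free with $\exp\CA=\{1,e_2,\ldots,e_{\ell-1},e_\ell\}$, Ziegler's canonical multiplicity on $\CA^H$ is free with exponents
\[
\exp(\CA^H,\kappa)=\{e_2,\ldots,e_{\ell-1},e_\ell\}.
\]
Now $\CA^H$ is an arrangement of rank $\ell-1$ and its own exponents are $\exp(\CA^H)=\{1,e_2,\ldots,e_{\ell-1}\}$ by hypothesis. To match the notation of Lemma~\ref{lem:multifreetube}, I would set $\mu:=\kappa$ and identify the exponents of the free multiarrangement $(\CA^H,\kappa)$ as $\{e_\ell,e_2,\ldots,e_{\ell-1}\}$, so that $e_\ell$ plays the role of the distinguished exponent $e=1+|\kappa|-|\CA^H|$ attached to the Euler-type generator. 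The key numerical identity to verify is precisely this: that $e_\ell=1+|\kappa|-|\CA^H|$, i.e.\ that $|\kappa|-|\CA^H|=e_\ell-1$. This is a consequence of Theorem~\ref{thm:ziegler-saito}\eqref{eq:exp}: since $\exp(\CA^H,\kappa)=\{e_2,\ldots,e_\ell\}$, the sum of these exponents equals $|\kappa|$, while the sum of the exponents $\{1,e_2,\ldots,e_{\ell-1}\}$ of the simple arrangement $\CA^H$ equals $|\CA^H|$; subtracting gives $|\kappa|-|\CA^H|=e_\ell-1$ as required.

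Next I would check the main hypothesis of Lemma~\ref{lem:multifreetube}, namely the inequality $|\mu|-|\CA^H|\ge e_{\max}$, where $e_{\max}$ is the largest exponent of the underlying \emph{simple} arrangement $\CA^H$. Here the largest exponent of $\CA^H$ is $e_{\ell-1}$, since $\exp(\CA^H)=\{1,e_2,\ldots,e_{\ell-1}\}$ and the $e_i$ are listed in nondecreasing order. By the identity just established, $|\kappa|-|\CA^H|=e_\ell-1$, so the required inequality reads $e_\ell-1\ge e_{\ell-1}$, which holds because the hypothesis $e_{\ell-1}<e_\ell$ forces $e_{\ell-1}\le e_\ell-1$ (the exponents are integers). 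This is the one place where the strict inequality $e_{\ell-1}<e_\ell$ in the hypothesis is genuinely used, and I expect it to be the crux of the argument: it is exactly the condition that separates $e_\ell$ from the remaining exponents so that Lemma~\ref{lem:multifreetube} applies with $\mu=\kappa$.

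Having verified both hypotheses, the conclusion is immediate from Lemma~\ref{lem:multifreetube}. For any multiplicity $\nu$ with $\one<\nu<\kappa$, that lemma gives freeness of $(\CA^H,\nu)$ together with the exponent description
\[
\exp(\CA^H,\nu)=\{\,1+|\nu|-|\CA^H|,\,e_2,\ldots,e_{\ell-1}\,\},
\]
which is precisely the asserted statement after renaming $\nu$ to $\mu$ in the corollary. I would therefore close by noting that the corollary is nothing more than the specialization of Lemma~\ref{lem:multifreetube} to the Ziegler multiplicity, with the two hypotheses of the lemma supplied respectively by Theorem~\ref{thm:zieglermulti} (for freeness of $(\CA^H,\kappa)$ and the exponent count) and by the strict inequality $e_{\ell-1}<e_\ell$ among the top two exponents of $\CA$ (for the size condition $|\kappa|-|\CA^H|\ge e_{\ell-1}$).
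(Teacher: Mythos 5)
Your proposal is correct and follows essentially the same route as the paper's proof: both establish the numerical identity $1+|\kappa|-|\CA^H| = e_\ell$ (you via summing the exponents of $(\CA^H,\kappa)$ from Theorem \ref{thm:zieglermulti} and of $\CA^H$, the paper via $|\kappa|=|\CA|-1$ and the exponent sum of $\CA$), then use the strict inequality $e_{\ell-1}<e_\ell$ to verify $|\kappa|-|\CA^H|\ge e_{\ell-1}$, and conclude by applying Lemma \ref{lem:multifreetube} to $\CA^H$ and $\kappa$. The two computations are trivial rearrangements of one another, so there is nothing to flag.
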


\begin{proof}
Since $\CA^H$ is free with $\exp(\CA^H) = \{1,e_2,\ldots,e_{\ell-1}\}$,
we have $\vert \CA^H \vert  = 1+e_2+\ldots+e_{\ell-1}$.
Also note that $\vert \kappa \vert = \vert \CA \vert -1$.
Therefore, 
\[
1 + \vert \kappa \vert - \vert \CA^H \vert 
= \vert \CA \vert - (1+e_2+\ldots+e_{\ell-1}) = e_{\ell}.
\]
Consequently, 
$\exp(\CA^H,\kappa) = \{e_\ell, e_2,\ldots,e_{\ell-1}\} = 
\{1+ \vert \kappa \vert - \vert \CA^H \vert , e_2,\ldots,e_{\ell-1}\}$,
by Theorem \ref{thm:zieglermulti}.
Thus $\vert \kappa \vert - \vert \CA^H \vert  = e_{\ell} - 1 \ge e_{\ell-1}$, 
by hypothesis. So the result follows from 
Lemma \ref{lem:multifreetube} applied to $\CA^H$ and $\kappa$. 
\end{proof}

Corollary \ref{cor:multifreetube} yields that 
under rather weak assumptions,
\emph{every} multiplicity between $\one$ and the Ziegler multiplicity
$\kappa$  is free.
This yields an abundance of free filtrations from $\one$ to 
$\kappa$ on $\CA^H$. 
We demonstrate this in the context of reflection arrangements. 

\begin{proposition}
\label{prop:multifreetube}
Let $W$ be  of type $A_{\ell}$, $B_{\ell}$, $G(r,1,\ell)$,  
$F_4$, $H_4$, $G_{31}$, $G_{32}$ $E_6$, $E_7$, or $E_8$.
Let $\CA = \CA(W)$. 
Then for any $H \in \CA$ and every 
multiplicity $\one \le \mu \le \kappa$,
the multiarrangement 
$(\CA^H,\mu)$ is free with 
$\exp(\CA^H,\mu) 
= \{1+\vert \mu \vert - \vert \CA^H \vert,e_2,\ldots,e_{\ell-1}\}$.
\end{proposition}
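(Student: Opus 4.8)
The plan is to verify that each group $W$ in the list satisfies the hypotheses of Corollary \ref{cor:multifreetube}, from which the freeness of $(\CA^H,\mu)$ for all $\one \le \mu \le \kappa$ follows immediately. The boundary cases $\mu = \one$ and $\mu = \kappa$ are covered respectively by the trivial freeness of $\CA^H$ itself and by Theorem \ref{thm:zieglermulti}, so the content lies in the intermediate multiplicities. Concretely, Corollary \ref{cor:multifreetube} applies once we know three things: first, that $\CA = \CA(W)$ is free (this is Terao's theorem, cited in the introduction); second, that the top exponent $e_\ell$ of $\CA$ is \emph{strictly} larger than the second-largest exponent $e_{\ell-1}$ (the strict inequality $e_{\ell-1} < e_\ell$ in the hypothesis); and third, that for any $H \in \CA$ the restriction $\CA^H$ is free with exponents $\{1, e_2, \ldots, e_{\ell-1}\}$, i.e.\ precisely the exponents of $\CA$ with the top one removed.

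First I would dispose of the freeness and exponent bookkeeping by invoking the known data. For the Coxeter cases $A_\ell$, $B_\ell$, $F_4$, $H_4$, $E_6$, $E_7$, $E_8$ the exponents are the classical Coxeter exponents, and for $G(r,1,\ell)$, $G_{31}$, $G_{32}$ they are equally well tabulated (e.g.\ in \cite{orlikterao:arrangements}); in every one of these cases a direct inspection shows the largest exponent is a strict maximum, so $e_{\ell-1} < e_\ell$ holds. The freeness of the restriction $\CA^H$ with the correct exponents is where the genuine input enters: for Coxeter groups all restrictions are again (products of) reflection arrangements of parabolic type and are free by Terao, and one checks that removing the reflecting hyperplane drops exactly the top exponent; for the monomial groups $G(r,1,\ell)$ and the Shephard groups $G_{31}, G_{32}$ one likewise uses the structure of $L(\CA)$ and the fact that restrictions of these reflection arrangements are free with the expected exponents. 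The key combinatorial identity driving the proof, already extracted inside the proof of Corollary \ref{cor:multifreetube}, is that
\[
1 + \vert \kappa \vert - \vert \CA^H \vert = e_\ell,
\]
which forces $\vert \kappa \vert - \vert \CA^H \vert = e_\ell - 1 \ge e_{\ell-1}$ exactly when $e_{\ell-1} < e_\ell$, thereby supplying the hypothesis $\vert \mu \vert - \vert \CA^H \vert \ge e_\ell$ needed in Lemma \ref{lem:multifreetube} once we take $\mu = \kappa$ there.

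The main obstacle, and the step requiring real care rather than citation, is confirming on a case-by-case basis that for \emph{every} hyperplane $H \in \CA$ the restriction $\CA^H$ is free with exponents exactly $\{1, e_2, \ldots, e_{\ell-1}\}$. Because $W$ acts transitively on hyperplanes within each orbit but may have several orbits (notably for the imprimitive and Shephard groups), one must check each orbit representative; here the classification of restrictions of reflection arrangements, together with the tables of exponents, does the work, but it is the place where an unverified assumption could creep in. I would therefore organize this verification group by group, recording the exponents of $\CA$ and of each $\CA^H$ and confirming the strict inequality $e_{\ell-1} < e_\ell$ alongside the matching of restricted exponents; for the infinite families $A_\ell$, $B_\ell$, $G(r,1,\ell)$ a uniform argument suffices, while $F_4$, $H_4$, $G_{31}$, $G_{32}$, $E_6$, $E_7$, $E_8$ are handled by explicit exponent data. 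Once these facts are in hand for each listed $W$, Corollary \ref{cor:multifreetube} yields the freeness and the stated exponents $\exp(\CA^H,\mu) = \{1 + \vert \mu \vert - \vert \CA^H \vert, e_2, \ldots, e_{\ell-1}\}$ for all intermediate $\mu$, completing the proof.
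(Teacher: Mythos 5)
Your proposal is correct and takes essentially the same route as the paper: the paper's entire proof is the observation that, by \cite[Props.~6.73, 6.77]{orlikterao:arrangements} and the exponent tables in \cite[App.~C]{orlikterao:arrangements}, every restriction $\CA^H$ of the listed arrangements satisfies the hypotheses of Corollary \ref{cor:multifreetube}; your explicit treatment of the boundary cases $\mu = \one$ (freeness of $\CA^H$ itself) and $\mu = \kappa$ (Theorem \ref{thm:zieglermulti}) is exactly the right way to pass from the open range $\one < \mu < \kappa$ of the corollary to the closed range of the proposition.

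One factual slip is worth correcting, although it does not damage your argument because you ultimately defer to the classification tables: restrictions of Coxeter arrangements are in general \emph{not} (products of) reflection arrangements of parabolic subgroups, so ``free by Terao'' is not the right justification for the exceptional types. For instance, $\CA(E_8)^H$ has $91$ hyperplanes with exponents $\{1,7,11,13,17,19,23\}$, whereas the largest parabolic, of type $E_7$, has only $63$ reflecting hyperplanes; likewise $\CA(F_4)^H$ has $13$ hyperplanes, which is not the hyperplane count of any rank-$3$ reflection arrangement. The freeness of these restrictions and the matching of their exponents with $\{1,e_2,\ldots,e_{\ell-1}\}$ is precisely the case-by-case content of the Orlik--Solomon/Orlik--Terao data, i.e.\ the ``explicit exponent data'' you invoke in your final paragraph, so your closing plan is sound even though the intermediate justification is not. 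Finally, a trivial notational point: the hypothesis of Lemma \ref{lem:multifreetube}, applied to the arrangement $\CA^H$ with multiplicity $\kappa$, reads $|\kappa| - |\CA^H| \ge e_{\ell-1}$ (the top exponent of $\CA^H$), not $\ge e_\ell$; your displayed inequality chain $|\kappa| - |\CA^H| = e_\ell - 1 \ge e_{\ell-1}$ already establishes exactly what is needed.
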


\begin{proof}
It readily follows from
\cite[Props.~6.73, 6.77]{orlikterao:arrangements} and 
the tables in 
\cite[App.~C]{orlikterao:arrangements} that in the given cases any restriction
$\CA^H$ satisfies the condition of 
Corollary \ref{cor:multifreetube}.
\end{proof}

Armed with 
Corollaries \ref{cor:multifreetuberestrictions} and \ref{cor:multifreetube}, 
we can now prove our first main result of our classification.

\begin{proposition}
\label{prop:monomial}
Let $\CA = \CA(W)$ be the reflection arrangement of 
the full monomial group 
$W = G(r,1, \ell)$ for $r, \ell \ge 2$.
Then both 
$(\CA'', \kappa)$  and $(\CA, \delta)$ 
are inductively free. 
\end{proposition}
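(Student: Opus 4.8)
The plan is to establish inductive freeness of $(\CA'',\kappa)$ first and then to deduce the statement for $(\CA,\delta)$ formally. Since $W=G(r,1,\ell)$ does not appear among the exceptions of Theorem \ref{thm:indfree1}, the simple arrangement $\CA=\CA(G(r,1,\ell))$ is itself inductively free. Hence, once $(\CA'',\kappa)$ is known to be inductively free, Corollary \ref{cor:indfreedelta} immediately yields that $(\CA,\delta)$ is inductively free as well. Thus the entire content of the proposition lies in the canonical multiplicity $\kappa$.

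The key structural observation is that, whether $H_0$ is a coordinate hyperplane $\{x_i=0\}$ or a diagonal hyperplane $\{x_i=\zeta x_j\}$ of $\CA(G(r,1,\ell))$, the underlying simple restriction $\CA''$ is isomorphic to $\CA(G(r,1,\ell-1))$. Indeed, inspecting the defining forms $x_i$ and $x_i-\zeta x_j$ shows that on $H_0$ their images are exactly the $\ell-1$ surviving coordinate forms together with all diagonal forms among them, i.e.\ the defining forms of $\CA(G(r,1,\ell-1))$; only the resulting Ziegler multiplicities $\kappa$ differ between the two orbits of hyperplanes. In particular $\CA''$ is inductively free by Theorem \ref{thm:indfree1}, and any further restriction $(\CA'')''$ endowed with its canonical multiplicity is again of the form $(\CA(G(r,1,\ell-1))'',\kappa)$.

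I would then prove by induction on $\ell$ that $(\CA(G(r,1,\ell))^{H_0},\kappa)$ is inductively free for \emph{every} hyperplane $H_0$. For $\ell\le 3$ the multiarrangement $(\CA'',\kappa)$ has rank at most $2$ and is inductively free by Remark \ref{rem:rank2indfree}. For the inductive step ($\ell\ge 4$), Proposition \ref{prop:multifreetube}, applied to $W=G(r,1,\ell)$, guarantees that every multiplicity $\one\le\mu\le\kappa$ on $\CA''$ is free; so one may choose a free filtration $\one=\mu_1<\dots<\mu_n=\kappa$ of $(\CA'',\kappa)$ in the sense of Definition \ref{def:freefiltration}, adding a single unit of multiplicity at each step. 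Since $\CA''$ and $\kappa$ satisfy the hypotheses of Lemma \ref{lem:multifreetube} (this is precisely what Proposition \ref{prop:multifreetube} records), Corollary \ref{cor:multifreetuberestrictions} applies and forces every restriction along this filtration to equal $((\CA'')'',\kappa')$, where $\kappa'$ is Ziegler's canonical multiplicity on $(\CA'')''$; that is, each restriction is one and the same multiarrangement $(\CA(G(r,1,\ell-1))'',\kappa)$, which is inductively free by the induction hypothesis. Lemma \ref{lem:indfreechain}, applied to the inductively free arrangement $\CA''$ together with this free filtration all of whose restrictions are inductively free, then gives that $(\CA'',\kappa)$ is inductively free, completing the induction.

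The crux of the argument, and the reason it closes so cleanly, is the uniformity supplied by Corollary \ref{cor:multifreetuberestrictions}: a priori the Euler multiplicities $\mu_i^*$ arising along a free filtration could vary from step to step, each demanding a separate inductive-freeness analysis, but in the ``tube'' situation of Lemma \ref{lem:multifreetube} they are all pinned to the single canonical multiplicity $\kappa'$ on $(\CA'')''$. The only genuinely case-dependent input is the identification of the simple restriction $\CA''$ with $\CA(G(r,1,\ell-1))$, which must be verified for both orbits of hyperplanes; once that is in hand, the two distinct multiplicity patterns of $\kappa$ on $\CA''$ become irrelevant, since Proposition \ref{prop:multifreetube} treats all intermediate multiplicities simultaneously. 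I expect this identification, and the careful bookkeeping needed to invoke Lemma \ref{lem:multifreetube} for $\CA''$ rather than for $\CA$ itself, to be the main technical points to discharge.
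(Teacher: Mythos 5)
Your proposal is correct and follows essentially the same route as the paper's proof: reduce to $(\CA'',\kappa)$ via Theorem \ref{thm:indfree1} and Corollary \ref{cor:indfreedelta}, induct on $\ell$ using the identification $\CA''\cong\CA(G(r,1,\ell-1))$, invoke Proposition \ref{prop:multifreetube} to obtain a free filtration from $\one$ to $\kappa$, use Corollary \ref{cor:multifreetuberestrictions} to pin every restriction along the filtration to the canonical multiplicity on $\CA(G(r,1,\ell-2))$, and conclude by Lemma \ref{lem:indfreechain}. The only cosmetic difference is that the paper checks the exponent-gap hypothesis of Lemma \ref{lem:multifreetube} for $\CB=\CA''$ explicitly (namely $|\kappa|-|\CA''|=(\ell-1)r>(\ell-2)r+1$), whereas you delegate it to Proposition \ref{prop:multifreetube}; this is legitimate, since the proof of that proposition via Corollary \ref{cor:multifreetube} establishes exactly that inequality.
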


\begin{proof}
Owing to Theorem \ref{thm:indfree1} and 
Corollary \ref{cor:indfreedelta}, it is enough to show that 
$(\CA'',\kappa)$ is inductively free. 
We prove the latter by induction on $\ell$.
Clearly, for $\ell \le 3$, the statement holds. Thus suppose that 
$\ell \ge 4$ and that the result is true for smaller ranks. 

Note that 
irrespective of the choice of hyperplane in $\CA$, we have 
\begin{equation}
\label{eq:restr}
\CA'' \cong \CA(G(r,1, \ell-1)), 
\end{equation}
by \cite[Prop.\ 2.11]{orliksolomon:unitaryreflectiongroups}
(cf.~\cite[Prop.~6.82]{orlikterao:arrangements}).
Owing to \cite[Prop.\ 2.13]{orliksolomon:unitaryreflectiongroups}
(cf.~\cite[Prop.~6.77]{orlikterao:arrangements}), we have
\begin{equation}
\label{eq:exp}
\exp \CA = \{1, r+1, \ldots, (\ell-1)r +1\}. 
\end{equation}

Thanks to Proposition \ref{prop:multifreetube},
each multiplicity $\mu$ on $\CA''$ with
$\one \le \mu \le \kappa$ is 
free. So we pick an arbitrary free filtration 
of $\CA''$ from $\one$ to $\kappa$. 
Since $|\kappa| - |\CA''| = (\ell-1) r > (\ell-2) r + 1$,
which is 
the highest exponent of $\CA''$, by 
\eqref{eq:restr} and \eqref{eq:exp},
the hypotheses of Lemma \ref{lem:multifreetube} are satisfied for 
$\CB := \CA''$.
Consequently, Corollary \ref{cor:multifreetuberestrictions} 
implies that for each multiplicity $\mu$ along our free chain
we have $(\CB'',\mu^*) = (\CB'',\kappa_1)$, 
where $\kappa_1$ denotes the Ziegler multiplicity on $\CB''$.
Since $\CB'' \cong \CA(G(r,1, \ell-2))$,
by \eqref{eq:restr}, 
it follows from our induction hypothesis that $(\CB'',\kappa_1)$ is 
inductively free.
Consequently, since 
the simple arrangement $\CA''$ is inductively free,
thanks to Theorem \ref{thm:indfree1}, 
it follows that $(\CA'',\kappa)$ is inductively free,
by a repeated application of 
the addition part of 
Theorem \ref{thm:add-del}.
\end{proof}

Moreover, 
Corollaries \ref{cor:indfreechain} and \ref{cor:multifreetube}, 
imply the following consequence of Theorem \ref{thm:indfree2}.

\begin{corollary}
\label{cor:indfreerank3}
Let $W$ be of type $F_4$, $H_4$, $G_{31}$ or $G_{32}$.
Then $(\CA(W)'',\kappa)$ is inductively free.
\end{corollary}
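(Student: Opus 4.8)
The plan is to realize $(\CA'', \kappa)$ as the top of a free filtration on the rank-$3$ restriction $\CA''$ and then invoke Corollary \ref{cor:indfreechain}. Each of the groups $F_4$, $H_4$, $G_{31}$, $G_{32}$ has rank $4$, so for a hyperplane $H \in \CA = \CA(W)$ the restriction $\CA'' = \CA^H$ is an essential arrangement of rank $3$. The first step is to establish that this simple rank-$3$ arrangement $\CA''$ is inductively free, and here I would appeal to Theorem \ref{thm:indfree2}. For $W$ of type $F_4$, $H_4$, or $G_{32}$ the full reflection arrangement $\CA(W)$ is itself inductively free by Theorem \ref{thm:indfree1}, so alternative (i) of Theorem \ref{thm:indfree2} applies and every restriction, in particular $\CA''$, is inductively free. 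The group $G_{31}$ is genuinely non-inductively free, but it appears in the exceptional list of alternative (iii) of Theorem \ref{thm:indfree2}, and since $\dim H = 3 \le 3$, that alternative guarantees $\CA''$ is inductively free as well.

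The second step is to produce a free filtration $\one = \mu_1 < \cdots < \mu_n = \kappa$ on $\CA''$. Here I would verify the hypotheses of Corollary \ref{cor:multifreetube} for $\CA(W)$ and $H$: namely that $\CA(W)$ is free with exponents $1 = e_1 \le e_2 \le e_3 < e_4$ (with strict inequality at the top) and that the restriction $\CA^H$ is free with $\exp(\CA^H) = \{1, e_2, e_3\}$. Both facts are read off from the tables in \cite[App.~C]{orlikterao:arrangements} together with \cite[Props.~6.73, 6.77]{orlikterao:arrangements}; this is precisely the verification already carried out in Proposition \ref{prop:multifreetube}, which lists these four types. Granting this, Corollary \ref{cor:multifreetube} shows that \emph{every} multiplicity $\one < \mu < \kappa$ on $\CA''$ is free with the predicted exponents, so choosing any chain of multiplicities that increases by $1$ at a single hyperplane at each step yields the required free filtration terminating at $\kappa$.

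With both ingredients in hand, the conclusion follows immediately from Corollary \ref{cor:indfreechain}, applied to the inductively free $3$-arrangement $\CA''$ and the free multiplicity $\kappa$ equipped with the free filtration just constructed. The only real work lies in the second step, and the main obstacle there is confirming the exponent conditions $e_3 < e_4$ and $\exp(\CA^H) = \{1, e_2, e_3\}$ for each of the four groups --- a finite, tabular check rather than a conceptual difficulty, and one already subsumed by Proposition \ref{prop:multifreetube}. In particular the strictness $e_3 < e_4$ must be confirmed in each case (for example $\{1,5,7,11\}$ for $F_4$ and $\{1,11,19,29\}$ for $H_4$), since it is exactly this strictness that forces $|\kappa| - |\CA^H| = e_4 - 1 \ge e_3$ and thereby activates Lemma \ref{lem:multifreetube} underlying Corollary \ref{cor:multifreetube}.
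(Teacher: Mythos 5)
Your proposal is correct and follows essentially the same route as the paper: the paper likewise invokes Proposition \ref{prop:multifreetube} (i.e.\ the exponent conditions of Corollary \ref{cor:multifreetube}) to obtain free filtrations from $\one$ to $\kappa$ on $\CA(W)''$, cites Theorem \ref{thm:indfree2} for inductive freeness of the simple restriction, and concludes via Corollary \ref{cor:indfreechain}. Your additional case-by-case remarks (alternative (i) versus (iii) of Theorem \ref{thm:indfree2}, and the explicit exponent checks) are just the details the paper leaves implicit.
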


\begin{proof}
By Proposition \ref{prop:multifreetube}, in each of the given cases 
the condition in Corollary \ref{cor:multifreetube}
on the exponents of $\CA(W)''$ is satisfied, so that every chain of 
multiplicities between $\one$ and $\kappa$  on $\CA(W)''$ is free.
By Theorem \ref{thm:indfree2}, 
$\CA(W)''$ is inductively free in each instance.
Therefore, it follows from Corollary \ref{cor:indfreechain}
that also $(\CA(W)'',\kappa)$ is inductively free.
\end{proof}

\section{Proofs of Theorems \ref{thm:kappa1} and \ref{thm:Wdelta}}
\label{sec:proofs}

Throughout, 
$W$ denotes a complex reflection group.
Owing to 
\cite[Prop.~2.10]{hogeroehrle:indfree}
and Theorem \ref{thm:products},
we may assume that $W$ is irreducible.
Fix $H_0 \in \CA$ and let $\CA''$  be the
restriction of $\CA$ with respect to $H_0$.
Moreover, fix $m_0 \in \BBZ_{\ge 1}$
and let $\delta = \delta_{H_0,m_0}$
as in Definition \ref{def:delta}.
We prove Theorems \ref{thm:kappa1} and \ref{thm:Wdelta}
more less simultaneously making repeated 
use of Corollary \ref{cor:indfreedelta}.

\subsection{Groups of low rank}
\label{ssect:rank2}
It is immediate from 
\cite[Cor.~7]{ziegler:multiarrangements}
that the reverse implication 
in Theorem \ref{thm:kappa1}
holds provided 
$W$ has rank at most $3$.
Likewise Theorem \ref{thm:Wdelta} 
holds for  
$W$ of rank at most 2.

\subsection{Coxeter Groups}
\label{ssect:coxeter}
Let $W$ be an irreducible Coxeter group.
By \cite{cuntz:indfree}, every Coxeter arrangement 
is inductively free.
So once we have shown that 
$(\CA(W)'', \kappa)$ is inductively free, so is 
$(\CA(W), \delta)$, thanks to 
Corollary \ref{cor:indfreedelta}.

\begin{lemma}
\label{lem:typea}
Let $\CA = \CA(A_{\ell-1})$ be the Coxeter arrangement of type $A_{\ell-1}$.
Then both $(\CA'', \kappa)$
and $(\CA, \delta)$
are inductively free.
\end{lemma}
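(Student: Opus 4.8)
The plan is to handle the type $A_{\ell-1}$ case by the same inductive strategy used for the full monomial groups in Proposition \ref{prop:monomial}, exploiting that the Coxeter arrangement of type $A$ restricts to a Coxeter arrangement of type $A$ of one lower rank. First I would record that the braid arrangement $\CA = \CA(A_{\ell-1})$ is inductively free by Terao's result (or \cite{cuntz:indfree}), and that its exponents are $\exp \CA = \{1, 2, \ldots, \ell-1\}$, so the largest exponent is $e_\ell = \ell-1$. By \cite[Prop.~6.72, 6.73]{orlikterao:arrangements} (or the corresponding nomenclature for monomial groups with $r=1$), any restriction to a hyperplane satisfies $\CA'' \cong \CA(A_{\ell-2})$, independently of the chosen hyperplane. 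Thus, as in Proposition \ref{prop:monomial}, it suffices by Corollary \ref{cor:indfreedelta} to prove that $(\CA'', \kappa)$ is inductively free, since $\CA$ itself is inductively free.

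The induction is on $\ell$. For small $\ell$ (say $\ell \le 4$, so that $\CA''$ has rank at most $3$) the statement is immediate from Corollary \ref{cor:indfreedelta3} and Remark \ref{rem:rank2indfree}, once one knows $(\CA'',\kappa)$ is free, which follows from Theorem \ref{thm:zieglermulti}. For the inductive step with $\ell$ large, I would invoke Proposition \ref{prop:multifreetube}, which applies to type $A_\ell$ and guarantees that \emph{every} multiplicity $\one \le \mu \le \kappa$ on $\CA''$ is free. Hence I may choose an arbitrary free filtration of $(\CA'',\kappa)$ from $\one$ up to $\kappa$. The key numerical check is that the hypothesis $\vert \kappa \vert - \vert \CA'' \vert \ge e_{\ell-1}$ of Lemma \ref{lem:multifreetube} holds for $\CB := \CA''$: here $\vert \kappa \vert - \vert \CA'' \vert$ equals the top exponent $\ell-2$ of $\CA''\cong\CA(A_{\ell-2})$, and comparing against the second-largest exponent $\ell-3$ shows $\ell-2 > \ell-3$, so the condition is satisfied (with the analogous strict inequality needed in the reasoning of Corollary \ref{cor:multifreetuberestrictions}).

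With the filtration in place, Corollary \ref{cor:multifreetuberestrictions} forces each restriction $(\CB'', \mu_i^*)$ along the filtration to coincide with $(\CB'', \kappa_1)$, where $\kappa_1$ is Ziegler's canonical multiplicity on $\CB'' = (\CA'')''$. Since $\CB'' \cong \CA(A_{\ell-3})$, the induction hypothesis yields that $(\CB'', \kappa_1)$ is inductively free. Now $\CA'' \cong \CA(A_{\ell-2})$ is inductively free as a simple arrangement, and every restriction along the free filtration is inductively free; so a repeated application of the addition part of Theorem \ref{thm:add-del} (equivalently, an appeal to Lemma \ref{lem:indfreechain}) shows that $(\CA'',\kappa)$ is inductively free. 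This completes the induction, and the assertion for $(\CA,\delta)$ then follows from Corollary \ref{cor:indfreedelta}.

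I expect the main obstacle to be purely bookkeeping rather than conceptual: one must verify the exponent inequality of Lemma \ref{lem:multifreetube} precisely in the borderline regime (the gap $\vert\kappa\vert - \vert\CA''\vert = \ell-2$ is exactly one more than the second exponent $\ell-3$, so the strict inequalities must be tracked carefully) and confirm that the base cases are correctly covered so the induction has somewhere to start. Everything else is a direct transcription of the monomial-group argument with $r=1$.
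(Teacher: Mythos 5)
Your proof is correct, but it takes a genuinely different route from the paper. The paper's own argument for $\ell \ge 5$ is essentially a citation: after fixing $H_0 = \ker(x_1-x_2)$ and identifying $\CA'' \cong \CA(A_{\ell-2})$, it writes down the defining polynomial of $(\CA'',\kappa)$ (multiplicity $2$ on the hyperplanes $\ker(x_1 - x_j)$, multiplicity $1$ elsewhere) and invokes the external result \cite[Cor.~1.2]{conradroehrle:indfree} on inductively free multiderivations of braid arrangements; the claim for $(\CA,\delta)$ then follows from Corollary \ref{cor:indfreedelta}, exactly as in your proposal. You instead transcribe the internal filtration machinery of Proposition \ref{prop:monomial} to the case $r=1$: Proposition \ref{prop:multifreetube} (which does list type $A_\ell$) supplies a free filtration from $\one$ to $\kappa$ on $\CA''$, Corollary \ref{cor:multifreetuberestrictions} identifies every restriction along that filtration with $(\CB'',\kappa_1)$, and induction on $\ell$ together with Lemma \ref{lem:indfreechain} closes the argument. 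What your route buys is self-containedness: no appeal to \cite{conradroehrle:indfree} is needed, and one sees that type $A$ can be absorbed into the same argument as the monomial groups $G(r,1,\ell)$. What the paper's route buys is brevity, and it sidesteps the borderline exponent analysis entirely.

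One piece of bookkeeping you should repair, although it does not invalidate the argument: the hypothesis of Lemma \ref{lem:multifreetube}, applied to $\CB := \CA''$ and $\mu = \kappa$, is that $\vert\kappa\vert - \vert\CA''\vert$ be at least the \emph{largest} exponent of $\CA''$, namely $\ell-2$, with a non-strict inequality --- not that it strictly exceed the second-largest exponent $\ell-3$, as you phrase it. Since $\vert\kappa\vert - \vert\CA''\vert = \ell-2$, the correct check holds with \emph{equality}; because the exponents here are consecutive integers, your condition $\ell-2 > \ell-3$ happens to be equivalent, so nothing breaks, but the right way to state the borderline is this: in Proposition \ref{prop:monomial} for $G(r,1,\ell)$ with $r\ge 2$ one has $\vert\kappa\vert - \vert\CA''\vert = (\ell-1)r$ strictly larger than the top exponent $(\ell-2)r+1$ of $\CA''$, whereas in type $A$ one gets exact equality, and the argument survives only because Lemma \ref{lem:multifreetube} (and hence Corollary \ref{cor:multifreetuberestrictions}, which needs no extra strict inequality) requires $\ge$ rather than $>$. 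A last, cosmetic point: for $\ell \le 4$ the restriction $\CA''$ has rank at most $2$, not $3$, so Remark \ref{rem:rank2indfree} alone covers $(\CA'',\kappa)$ there, and Corollary \ref{cor:indfreedelta3} covers $(\CA,\delta)$ when $\ell = 4$.
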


\begin{proof}
For $\ell \le 4$, the result follows from \S \ref{ssect:rank2} and 
Corollary \ref{cor:indfreedelta3}.
So suppose that $\ell \ge 5$.
Since the underlying Coxeter group 
is transitive on $\CA$, there is no harm in fixing $H_0 = \ker (x_1 - x_2)$.
Thanks to \cite[Prop.~6.73]{orlikterao:arrangements},
$\CA'' \cong \CA(A_{\ell-2})$.
One readily checks that Ziegler's
multiarrangement $(\CA(A_{\ell-2}), \kappa)$ has defining 
polynomial
\[
Q(\CA(A_{\ell-2}), \kappa) = \prod_{2 \le j \le \ell-2} (x_1 - x_j)^2  \prod_{2 \le i<  j \le \ell-2} (x_i - x_j).
\]
It follows from \cite[Cor.~1.2]{conradroehrle:indfree}
that $(\CA(A_{\ell-2}), \kappa)$ is inductively free.
Since $\CA$ is inductively free, it follows from 
Corollary \ref{cor:indfreedelta}
that $(\CA, \delta)$ is inductively free.
\end{proof}

The following is the special case $r = 2$ in Proposition \ref{prop:monomial}.

\begin{lemma}
\label{lem:typeb}
Let $\CA = \CA(B_\ell)$ be the Coxeter arrangement of type $B_\ell$.
Then both $(\CA'', \kappa)$
and $(\CA, \delta)$
are inductively free.
\end{lemma}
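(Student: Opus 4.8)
The plan is to recognize that Lemma \ref{lem:typeb} is literally the case $r=2$ of Proposition \ref{prop:monomial}, since the Coxeter group of type $B_\ell$ is exactly the full monomial group $G(2,1,\ell)$. So the cleanest proof is a one-line reduction: I would simply invoke Proposition \ref{prop:monomial} with $r=2$, which asserts that both $(\CA'', \kappa)$ and $(\CA, \delta)$ are inductively free for $W = G(r,1,\ell)$ whenever $r,\ell \ge 2$. The only thing to check is that the hypothesis $\ell \ge 2$ is met, which we may assume (the rank $\le 2$ case is already handled in \S \ref{ssect:rank2}).

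\begin{proof}
This is the special case $r = 2$ of Proposition \ref{prop:monomial}, since $\CA(B_\ell) = \CA(G(2,1,\ell))$.
\end{proof}

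If one preferred to avoid the forward reference dependency and argue directly, the strategy would mirror the proof of Proposition \ref{prop:monomial}: proceed by induction on $\ell$, using that the restriction $\CA(B_\ell)''$ is again of monomial type $\CA(B_{\ell-1})$ (by \cite[Prop.~6.82]{orlikterao:arrangements}) and that the exponents of $\CA(B_\ell)$ are $\{1,3,5,\ldots,2\ell-1\}$. One would verify that $|\kappa| - |\CA''| = 2(\ell-1)$ exceeds the top exponent $2\ell-3$ of $\CA''$, so that Proposition \ref{prop:multifreetube} and Corollary \ref{cor:multifreetuberestrictions} apply to give a free filtration from $\one$ to $\kappa$ along which every restriction equals Ziegler's multiplicity on $\CA(B_{\ell-1})''$, which is inductively free by the induction hypothesis. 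Combining this with the inductive freeness of the simple arrangement $\CA(B_\ell)''$ (Theorem \ref{thm:indfree1}) and repeated application of the addition part of Theorem \ref{thm:add-del} yields inductive freeness of $(\CA'',\kappa)$, and then Corollary \ref{cor:indfreedelta} gives the claim for $(\CA,\delta)$.

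The step I would regard as the genuine mathematical content is the verification that the exponent inequality in Corollary \ref{cor:multifreetube} holds, i.e.\ that $e_\ell - 1 \ge e_{\ell-1}$ for the restriction arrangement; but this is already subsumed in Proposition \ref{prop:multifreetube}, which lists type $B_\ell$ explicitly. Given that Proposition \ref{prop:monomial} is stated and proved for all $r \ge 2$, there is no real obstacle here at all: the entire force of the lemma is inherited directly from the more general monomial-group result, and the direct argument above merely re-specializes that same machinery.
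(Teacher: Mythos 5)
Your proof is correct and matches the paper exactly: the paper offers no separate argument for this lemma, introducing it with the sentence that it is the special case $r=2$ of Proposition \ref{prop:monomial}, which is precisely your one-line reduction via $\CA(B_\ell) = \CA(G(2,1,\ell))$. Your optional direct argument also faithfully mirrors the proof of Proposition \ref{prop:monomial}, so there is nothing to add.
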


\begin{lemma}
\label{lem:typed}
Let $\CA = \CA(D_\ell)$ be the Coxeter arrangement of type $D_\ell$
for $\ell \ge 3$.
Then both $(\CA'', \kappa)$
and $(\CA, \delta)$
are inductively free.
\end{lemma}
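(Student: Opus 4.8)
The plan is to handle type $D_\ell$ by reduction to cases already settled, exploiting the restriction structure of $\CA(D_\ell)$ together with the inductive-freeness machinery developed in Section \ref{sec:filtrations}. First I would record that $\CA(D_\ell)$ is inductively free (it is a Coxeter arrangement, so by \cite{cuntz:indfree}), hence by Corollary \ref{cor:indfreedelta} it suffices to prove that $(\CA'', \kappa)$ is inductively free; the statement about $(\CA, \delta)$ then follows immediately. So the entire task reduces to analysing the Ziegler multiplicity on the restriction $\CA(D_\ell)''$.

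The key structural input is the identification of the restriction. Unlike types $A$ and $B$, the Coxeter group of type $D_\ell$ is \emph{not} transitive on its hyperplanes when $\ell$ is such that distinct orbits of reflecting hyperplanes occur, so I would first determine, using \cite[Prop.~6.73]{orlikterao:arrangements} and the tables in \cite[App.~C]{orlikterao:arrangements}, the isomorphism type of $\CA(D_\ell)''$ for each orbit of $H_0$. I expect the restriction to be (isomorphic to) a member of the family $\CA^k_p(r)$ with $r=2$, or a product involving lower-rank type-$D$ or type-$B$ arrangements, matching the description in Theorem \ref{thm:indfree2}(ii). Having pinned down $\CA''$, I would verify that the exponent hypothesis of Corollary \ref{cor:multifreetube} holds, namely that $\CA(D_\ell)$ has a unique top exponent $e_\ell$ strictly larger than the others and that the corresponding restriction is free with the predicted exponents; the exponents of $\CA(D_\ell)$ are $\{1,3,5,\ldots,2\ell-3,\ell-1\}$, so the strict-inequality condition $e_{\ell-1} < e_\ell$ needs a small check depending on the parity and size of $\ell$.

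With Corollary \ref{cor:multifreetube} in force, every multiplicity $\one \le \mu \le \kappa$ on $\CA''$ is free, so I obtain an abundance of free filtrations from $\one$ to $\kappa$. I would then run the same inductive argument as in Proposition \ref{prop:monomial}: pick an arbitrary free filtration, apply Corollary \ref{cor:multifreetuberestrictions} to conclude that each restriction $(\CB'', \mu_i^*)$ along the filtration equals $(\CB'', \kappa_1)$ for the Ziegler multiplicity $\kappa_1$ on $\CB'' := (\CA'')''$, identify $\CB''$ as a lower-rank arrangement of the same family (again via the restriction tables), and invoke the induction hypothesis together with the inductive freeness of the simple arrangement $\CA''$ (which holds by Theorem \ref{thm:indfree2}) to close the induction via repeated use of the addition part of Theorem \ref{thm:add-del}. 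The base cases $\ell \le 4$ are dispatched by \S \ref{ssect:rank2} and Corollary \ref{cor:indfreedelta3}.

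The main obstacle I anticipate is the case analysis forced by the non-transitivity and by the possibility that the relevant restriction $\CA(D_\ell)''$ or its further restriction is a \emph{reducible} arrangement, e.g.\ a product of a type-$A$ piece with a lower-rank type-$D$ or type-$B$ piece. In that situation the clean single-family induction of Proposition \ref{prop:monomial} breaks down, and I would instead appeal to Theorem \ref{thm:products} to reduce inductive freeness of the product multiarrangement to that of its factors, each of which is covered by Lemma \ref{lem:typea}, Lemma \ref{lem:typeb}, or a lower instance of the present lemma. Verifying that the Ziegler multiplicity $\kappa$ respects this product decomposition (so that the factor multiarrangements are again of Ziegler type on the respective reflection subarrangements) is the delicate bookkeeping step, and I expect it to require carefully tracking which hyperplanes of $\CA$ lie above each $Y \in \CA''$ via Definition \ref{def:kappa}.
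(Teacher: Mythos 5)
Your reduction to $(\CA'',\kappa)$ via Corollary \ref{cor:indfreedelta} is correct, and the first half of your plan does go through: although the paper does not list $D_\ell$ in Proposition \ref{prop:multifreetube}, the hypothesis of Corollary \ref{cor:multifreetube} is indeed satisfied here, since $\exp \CA(D_\ell)=\{1,3,5,\ldots,2\ell-3,\ell-1\}$ has the unique top exponent $2\ell-3$ (for $\ell\ge 4$) and the restriction $\CA(D_\ell)''\cong\CA^1_{\ell-1}(2)$ is free with exponents $\{1,3,\ldots,2\ell-5,\ell-1\}$, which are exactly the remaining ones. So every multiplicity between $\one$ and $\kappa$ on $\CA''$ is free, and free filtrations exist in abundance. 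Two side remarks: your worry about non-transitivity is unfounded ($W(D_\ell)$ is transitive on its reflecting hyperplanes for all $\ell\ge 3$, since all roots of $D_\ell$ are conjugate; the paper uses exactly this to fix $H_0=\ker(x_1-x_2)$), and no reducible restrictions occur in this argument, so that anticipated obstacle is a red herring.

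The genuine gap is in closing the induction. By Corollary \ref{cor:multifreetuberestrictions}, the restrictions along your filtration of $\CB:=\CA(D_\ell)''$ carry Ziegler's multiplicity $\kappa_1$ computed from the ambient arrangement $\CB=\CA^1_{\ell-1}(2)$ --- \emph{not} from $\CA(D_{\ell-1})$. These are different multiarrangements. Concretely, the hyperplanes you must increment are the $\ker(x_1\pm x_j)$ (the only ones where $\kappa>1$); restricting $\CB$ to $\ker(x_1-x_j)$ gives the simple arrangement $\CA^1_{\ell-2}(2)$, but above its hyperplane $\ker x_1$ lie the \emph{three} hyperplanes $\ker x_1$, $\ker(x_1-x_j)$, $\ker(x_1+x_j)$ of $\CB$, so $\kappa_1(\ker x_1)=2$, whereas the induction-hypothesis object $(\CA(D_{\ell-1})'',\kappa)$ has multiplicity $1$ there: the defining polynomials are $x_1^2\prod_k(x_1^2-x_k^2)^2\prod_{i<j}(x_i^2-x_j^2)$ versus $x_1\prod_k(x_1^2-x_k^2)^2\prod_{i<j}(x_i^2-x_j^2)$. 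Thus your induction hypothesis says nothing about the restrictions your filtration actually produces. The analogy with Proposition \ref{prop:monomial} breaks down precisely because, unlike the family $G(r,1,m)$, the family $\CA(D_m)$ is not closed under restriction, and Ziegler's multiplicity remembers the ambient arrangement it came from. Repairing this would require proving inductive freeness of the extra family $(\CA^1_{m}(2),\kappa_1)$ as well, e.g.\ by a simultaneous induction --- genuine additional work. The paper sidesteps the issue with a differently organized induction: it starts from the sub-multiarrangement $(\CA(D_{\ell-1})'',\kappa)\times\Phi_1$ of $(\CA(D_\ell)'',\kappa)$, which is inductively free by the induction hypothesis and Theorem \ref{thm:products}, and then adds the hyperplanes $\ker(x_i\pm x_{\ell-1})$ in a carefully chosen order, computing the Euler multiplicities via Proposition \ref{prop:Euler}; with that order (and those intermediate multiplicities, which are \emph{not} the full Ziegler multiplicity of $\CB$) every Euler restriction is isomorphic to $(\CA(D_{\ell-1})'',\kappa)$ itself, so the induction hypothesis applies and the table closes.
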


\begin{proof}
We first prove that 
$(\CA'', \kappa)$ is inductively free.
We argue by induction on $\ell \ge 3$.
For $\ell = 3$, we have $\CA(D_3) = \CA(A_3)$ and
so then the result follows from Lemma \ref{lem:typea}.

Now let $W = W(D_\ell)$ for $\ell \ge 4$
and suppose that the result holds for all instances of 
smaller rank.
Since $W$ is transitive on $\CA$, we may 
fix $H_0 = \ker(x_1 - x_2)$ without loss.
It follows from 
\cite[Prop.~6.85]{orlikterao:arrangements}
that 
$\CA'' = \CA^1_{\ell-1}(2)$.

One readily checks that 
Ziegler's
multiarrangement $(\CA(D_\ell)'', \kappa)$ has defining 
polynomial
\[
Q(\CA(D_{\ell})'', \kappa) = Q(\CA^1_{\ell-1}(2), \kappa) 
= x_1  \prod_{2 \le j \le \ell-1} (x_1^2 - x_j^2)^2
\prod_{3 \le i<  j \le \ell-1} (x_i^2 - x_j^2).
\]
According to Theorem \ref{thm:zieglermulti},
$(\CA(D_\ell)'',\kappa)$ is free with 
\begin{equation}
\label{eq:kappaD}
\exp (\CA(D_\ell)'',\kappa) 
= \exp (\CA^1_{\ell-1}(2) , \kappa)
= \{3,5, \ldots, 2(\ell-3)+1, \ell-1\}.
\end{equation}
By our induction hypothesis and  
Theorem \ref{thm:products},
the sub-multiarrangement
$(\CA(D_{\ell-1})'', \kappa) \times \Phi_1$ of 
$(\CA(D_\ell)'', \kappa)$ is inductively free.
By \eqref{eq:kappaD}, its set of exponents is
\[
\exp \left((\CA(D_{\ell-1})'', \kappa) \times \Phi_1\right)
= \exp \left((\CA^1_{\ell-2}(2) , \kappa) \times \Phi_1\right)
= \{3,5, \ldots, 2(\ell-4)+1, \ell-2, 0\}.
\] 
This forms the start of our induction table. 

We first add and restrict to the hyperplanes 
$\ker (x_1 - x_{\ell-1})$ and 
$\ker (x_1 + x_{\ell-1})$ twice.
Here we use Proposition 
\ref{prop:Euler} in order to determine the 
Euler multiplicity on the restrictions.
We consider the special case when $\ker (x_1 + x_{\ell-1})$ is 
added and restricted to for the second time.
Then in the notation of Proposition \ref{prop:Euler}, 
when considering 
the localization
$\CA_X = \{\ker (x_1 + x_{\ell-1}), \ker (x_1 - x_{\ell-1}), \ker x_1\}$,
we have $|\CA_X| = 3$, $|\mu_X| = 5$ and so 
Proposition 
\ref{prop:Euler}(2) applies.

Note that each of the restrictions is 
isomorphic to 
$(\CA^1_{\ell-2}(2) , \kappa) $
which is inductively free, by our inductive hypothesis.
Consequently, since in each case both $(\CA', \mu')$ and 
$(\CA'', \mu^*)$ are inductively free and 
$\exp(\CA'', \mu^*) \subseteq \exp(\CA', \mu')$, 
it follows that also $(\CA, \mu)$ is inductively free.

\begin{table}[ht!b]\small
\renewcommand{\arraystretch}{1.5}
\begin{tabular}{lll}
  \hline
  $\exp (\CA',\mu')$ & $\alpha_H$ & $\exp (\CA'', \mu^*)$ \\
  \hline
  \hline
$\exp \left((\CA^1_{\ell-2}(2) , \kappa) \times \Phi_1\right)$ & $x_1 - x_{\ell-1}$ &  
$\exp (\CA^1_{\ell-2}(2) , \kappa) $  \\
$= \{3,5, \ldots, 2(\ell-4)+1, \ell-2, 0\}$ & & \\
$\{3,5, \ldots, 2(\ell-4)+1, \ell-2, 1\}$ & $x_1 + x_{\ell-1}$ &  
$\exp (\CA^1_{\ell-2}(2) , \kappa) $   \\
$\{3,5, \ldots, 2(\ell-4)+1, \ell-2, 2\}$ & $x_1 - x_{\ell-1}$ &  
$\exp (\CA^1_{\ell-2}(2) , \kappa) $  \\
$\{3,5, \ldots, 2(\ell-4)+1, \ell-2, 3\}$ & $x_1 + x_{\ell-1}$ &  
$\exp (\CA^1_{\ell-2}(2) , \kappa) $  \\
$\{3,5, \ldots, 2(\ell-4)+1, \ell-2, 4\}$ & $x_2 - x_{\ell-1}$ &  
$\exp (\CA^1_{\ell-2}(2) , \kappa) $  \\
$\vdots$ & $\vdots$ & $\vdots$ \\
$\{3,5, \ldots, 2(\ell-4)+1, 2(\ell-3)+1, \ell-2\}$ & $x_{\ell-2} + x_{\ell-1}$ &  
$\exp (\CA^1_{\ell-2}(2) , \kappa) $  \\
$\exp (\CA(D_\ell)'',\kappa) = \{3,5, \ldots, 2(\ell-3)+1, \ell-1\}$ & & \\ 
\hline
\end{tabular}
\medskip
\caption{Induction Table for $(\CA(D_{\ell})'',\kappa)$.} 
\label{indtableD} 
\end{table}

It follows from Table \ref{indtableD} 
that $(\CA(D_{\ell})'', \kappa)$ is inductively free.
Since $\CA$ itself is inductively free, 
so is $(\CA, \delta)$, according to 
Corollary \ref{cor:indfreedelta}.
\end{proof}

\begin{lemma}
\label{lem:coxeterexceptional}
Let $W$ be an irreducible Coxeter group of exceptional type 
of rank at least $3$ and let
$\CA = \CA(W)$ be the Coxeter arrangement of $W$.
Then both  $(\CA'', \kappa)$
and $(\CA, \delta)$ are inductively free.
\end{lemma}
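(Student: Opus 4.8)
The plan is to run through the six exceptional irreducible Coxeter groups of rank $\ge 3$, namely $H_3$, $F_4$, $H_4$, $E_6$, $E_7$, $E_8$, and in each case to reduce everything to the inductive freeness of $(\CA'', \kappa)$. Indeed, every such Coxeter arrangement $\CA$ is itself inductively free by \cite{cuntz:indfree}, so once $(\CA'', \kappa)$ is known to be inductively free, Corollary \ref{cor:indfreedelta} (or Corollary \ref{cor:indfreedelta3} in rank $3$) yields inductive freeness of $(\CA, \delta)$ at once. Three of the cases are immediate from what precedes: for $H_3$ the restriction $\CA''$ has rank $2$, so $(\CA'', \kappa)$ is inductively free by Remark \ref{rem:rank2indfree} (cf.~\S\ref{ssect:rank2}); and for $F_4$ and $H_4$ the inductive freeness of $(\CA'', \kappa)$ is exactly Corollary \ref{cor:indfreerank3}. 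This leaves the three large Coxeter groups $E_6$, $E_7$, $E_8$.

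The heart of the argument is $E_8$. Here I set $\CB := \CA(E_8)''$, a rank-$7$ arrangement which is inductively free, being a restriction of the inductively free Coxeter arrangement $\CA(E_8)$ (Theorem \ref{thm:indfree2}(i)). By Proposition \ref{prop:multifreetube}, every multiplicity $\one \le \mu \le \kappa$ on $\CB$ is free, so I may fix a free filtration $\one = \mu_1 < \cdots < \mu_n = \kappa$ of $(\CB, \kappa)$. Verifying the exponent inequality $\vert \kappa \vert - \vert \CB \vert \ge e_\ell$ for $\CB$ from the $E_8$-data in \cite[App.~C]{orlikterao:arrangements} places me in the setting of Lemma \ref{lem:multifreetube}, whence Corollary \ref{cor:multifreetuberestrictions} forces every restriction along the filtration to be the \emph{same} multiarrangement $(\CB'', \kappa_1)$, where $\kappa_1$ is Ziegler's multiplicity on the rank-$6$ arrangement $\CB'' = (\CA(E_8)'')''$. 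By Lemma \ref{lem:indfreechain} it then suffices to prove that $(\CB'', \kappa_1)$ is inductively free.

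Establishing inductive freeness of $(\CB'', \kappa_1)$ is the main obstacle, since $\CB$ and $\CB''$ are not reflection arrangements and there is no clean family induction available as in Proposition \ref{prop:monomial}. The plan is to descend by iterating the same mechanism one rank at a time: $\CB'' = \CA(E_8)^X$ for a codimension-$2$ flat $X$ is again inductively free by Theorem \ref{thm:indfree2}(i); applying Corollary \ref{cor:multifreetube} to $\CB$ and $\CB''$ (after checking the exponent condition from the tables) shows that every intermediate multiplicity on $\CB''$ is free, and Corollary \ref{cor:multifreetuberestrictions} once more collapses the restrictions along a free filtration to a single, deeper Ziegler multiarrangement. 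The descent terminates once the restrictions have rank $2$, where inductive freeness is automatic by Remark \ref{rem:rank2indfree}. In practice this is carried out by exhibiting explicit induction tables in the style of Table \ref{indtableD}, with the Euler multiplicity at each addition controlled by Proposition \ref{prop:Euler}; the bookkeeping for rank $8$ is precisely where the computational difficulty of $E_8$ resides.

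Finally, $E_6$ and $E_7$ are deduced from $E_8$ by localization. Realizing $W(E_6)$ and $W(E_7)$ as parabolic subgroups $W_X$ of $W(E_8)$, the reflection arrangements $\CA(E_6)$ and $\CA(E_7)$ occur as localizations $\CA(E_8)_X$; choosing $H_0 \in \CA(E_8)_X$ and invoking Lemma \ref{lem:euler}(ii) identifies $(\CA(E_6)'', \kappa)$ and $(\CA(E_7)'', \kappa)$ with localizations of $(\CA(E_8)'', \kappa)$. Since $\CIFM$ is closed under localization (Theorem \ref{thm:localmulti}), both are inductively free. In every case $(\CA, \delta)$ then follows from Corollary \ref{cor:indfreedelta}, which completes the proof.
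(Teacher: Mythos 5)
Most of your reduction agrees with the paper's proof: the treatment of $H_3$ (rank-two restriction plus Corollary \ref{cor:indfreedelta3}), of $F_4$ and $H_4$ (Corollary \ref{cor:indfreerank3}), of $E_6$ and $E_7$ (localization in $E_8$ via Lemma \ref{lem:euler}(ii) and Theorem \ref{thm:localmulti}), the final appeal to Corollary \ref{cor:indfreedelta} for $(\CA,\delta)$, and also the \emph{first} stage of the $E_8$ argument: fixing a free filtration on $\CB := \CA(E_8)''$ using Proposition \ref{prop:multifreetube}, collapsing its restrictions to $(\CB'',\kappa_1)$ by Corollary \ref{cor:multifreetuberestrictions}, and reducing to $(\CB'',\kappa_1)$ via Lemma \ref{lem:indfreechain}. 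The gap lies in your proposed descent below rank $7$.

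You claim that Corollary \ref{cor:multifreetube} applies to $\CB$ and $\CB''$ ``after checking the exponent condition from the tables.'' The check in fact fails. That corollary requires the exponents of the restriction to be exactly the non-maximal exponents of the ambient arrangement: since $\exp \CB = \{1,7,11,13,17,19,23\}$, you would need $\exp \CB'' = \{1,7,11,13,17,19\}$. But the relevant restriction $\CC := \CB''$ is of type $(E_8,A_2)$ and has $\exp \CC = \{1,7,11,13,14,17\}$ --- note the exponent $14$, which is not even an exponent of $\CB$. Equivalently, $\exp(\CC,\kappa_1) = \{7,11,13,17,19,23\}$ is not obtained from $\exp \CC$ by replacing the exponent $1$, so the hypotheses of Lemma \ref{lem:multifreetube} and hence of Corollary \ref{cor:multifreetuberestrictions} are violated at this level; the self-propagating recursion you describe cannot even take its second step. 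This is not merely a defect of the method: along any filtration from $\one$ to $\kappa_1$ on $\CC$ several exponents must change (the $14$ must eventually grow into $19$ or $23$), the Euler restrictions do \emph{not} collapse to a single Ziegler multiarrangement, and in the paper's induction table for $(\CC,\kappa_1)$ (Table \ref{indtableE8}) two of the Euler multiplicities are strict enlargements of Ziegler's multiplicity and the order of the additions is essential. The paper closes this step not by recursion but by an explicit, computer-assisted induction table for $(\CC,\kappa_1)$, together with separate (computational) verification that the three rank-$5$ multirestrictions occurring there are inductively free. As written, your proposal contains no valid argument for the inductive freeness of $(\CB'',\kappa_1)$, which is the heart of the $E_8$ case and hence of the whole lemma.
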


\begin{proof}
For $W$ of type $H_3$ the result follows from 
Corollary \ref{cor:indfreedelta3}.

For the remaining instances, 
we first prove that 
$(\CA'', \kappa)$
is inductively free.
For $W$ of type $F_4$ and $H_4$, this follows from 
Corollary \ref{cor:indfreerank3}.

Since $W(E_6)$ and $W(E_7)$ are parabolic subgroups 
of $W(E_8)$, we obtain the result for
the former from the latter, thanks to 
\cite[Cor.~6.28]{orlikterao:arrangements},
Lemma \ref{lem:euler}(ii), and Theorem \ref{thm:localmulti}. 

The case for $W = W(E_8)$ is considerably more complicated.
Since $W$ is transitive on $\CA$, we may take $H_0 = \ker x$.
It follows from Proposition \ref{prop:multifreetube} that 
$(\CA'',\nu)$ is free for every multiplicity 
$\nu$ with $\one \le \nu \le \kappa$.
So we pick a filtration 
$\one = \nu_1 < \nu_2 \ldots < \nu_{28} < \nu_{29} = \kappa$
of free multiplicities. 
Since $\CB := \CA''$ is inductively free, by \cite{cuntz:indfree}, 
it follows from Lemma \ref{lem:indfreechain} that 
$(\CA'', \kappa)$  is inductively free, once we have 
established
that the corresponding multirestrictions $(\CB'',\nu_i^*)$ 
are inductively free. 
Moreover, each multiplicity $\nu_i^*$ 
is just Ziegler's canonical multirestriction 
$\kappa_1$ stemming from the simple arrangement $\CB$,
by Corollary \ref{cor:multifreetuberestrictions}.
Let $\CC := \CB''$. 
Since $\CC$ is inductively free, by \cite{cuntz:indfree}, we 
can build an induction table by
starting with the 
simple arrangement $\CC$ and increasing the multiplicities
all the way to $(\CC, \kappa_1)$.
This is carried out in Table \ref{indtableE8}, where we 
indicate how to increase the 
multiplicities on the hyperplanes of the inductively free arrangement $\CC$
such that each resulting multiplicity is again a free multiplicity for $\CC$.
Note that in the 
first $16$ steps of the table the 
Euler multiplicity $\mu^*$ of the multirestriction is just Ziegler's
canonical restriction on the simple arrangement $\CC''$.
The remaining two Euler multiplicities are small 
extensions of Ziegler's canonical multiplicity.
In particular, here the order in which the hyperplanes 
are added plays a crucial role.
There are precisely $3$ different 
rank $5$ multiarrangements that occur as restrictions in 
Table \ref{indtableE8}.
We have checked that each of them is again inductively free, as required. 
However, we omit to include this data.
We also do not give linear forms for 
$\alpha_H$ in the table, 
but give the number of the induction step at the beginning of each row.

\begin{table}[ht!b]\small
\renewcommand{\arraystretch}{1.5}
\begin{tabular}{rlll}
  \hline
Step &  $\exp (\CA',\mu')$ & $\alpha_H$ & $\exp (\CA'', \mu^*)$\\
  \hline
  \hline
0 & $\exp (\CC) = \{1, 7, 11, 13, 14, 17\}$ 
& ... &  $\{7,11,13,14, 17\}$ \\
&$\vdots$ & $\vdots$ & $\vdots$ \\
15 & $\{7, 11, 13, 14, 16, 17\}$ 
& ... &  $\{7,11,13,14,17\}$ \\
16 &$\{7, 11, 13, 14, 17, 17\}$ 
& ... &  $\{7,11,13,17,17\}$ \\
& $\vdots$ & $\vdots$ & $\vdots$ \\
20 &$\{7, 11, 13, 17, 17, 18\}$ 
& ... &  $\{7,11,13, 17, 17\}$ \\
21 & $\{7, 11, 13, 17, 17, 19\}$ 
& ... &  $\{7,11,13,17,19\}$ \\
& $\vdots$ & $\vdots$ & $\vdots$ \\
26 & $\{7, 11, 13, 17, 19, 22\}$ 
& ... &  $\{7,11,13,17,19\}$ \\
& $\exp (\CC,\kappa_1) = \{7, 11, 13, 17, 19, 23\}$ \\ 
\hline
\end{tabular}
\medskip
\caption{Induction Table for $(\CC,\kappa_1)$.} 
\label{indtableE8} 
\end{table}

We mention that 
the only types of restrictions which do occur throughout 
this induction for $(\CA'', \kappa)$
are $(E_8,A_2)$ in rank $6$, $(E_8,A_3)$ in rank $5$, $(E_8,A_4)$ and
$(E_8,D_4)$ in rank $4$ and $(E_8,A_1 \times D_4)$, $(E_8,D_5)$ and $(E_8,A_5)$ in
rank $3$,  where we 
use the notation to indicate restrictions
following \cite[\S 3, App.]{orliksolomon:unitaryreflectiongroups} (cf. \cite[\S 6.4, App.~C]{orlikterao:arrangements}).

Finally, since $(\CA'', \kappa)$
is inductively free in each instance, 
it follows again from 
Corollary \ref{cor:indfreedelta}
that also 
$(\CA, \delta)$ is inductively free.
\end{proof}

\subsection{The Monomial Groups $G(r,1,\ell)$}
\label{ssect:monomial1}
For $W = G(r,1,\ell)$ for $r, \ell \ge 2$, the result 
follows from Proposition \ref{prop:monomial}.

\subsection{Non-real inductively free reflection arrangements of exceptional type}
\label{ssect:nonreal}
Let $W$ be irreducible of rank at least $3$ of exceptional type 
so that $\CA = \CA(W)$
is inductively free.
It then follows from Theorem \ref{thm:indfree1}
that $W$ equals $G_{25}$, $G_{26}$, or 
$G_{32}$.

\begin{lemma}
\label{lem:indfreeex}
Let $\CA = \CA(W)$ be the reflection arrangement 
where $W$ is  
$G_{25}$, $G_{26}$, or 
$G_{32}$.
Then both 
$(\CA'', \kappa)$ 
and 
$(\CA, \delta)$ 
are inductively free. 
\end{lemma}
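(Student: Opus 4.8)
The plan is to prove that for each of the three exceptional groups $W = G_{25}, G_{26}, G_{32}$, both $(\CA'',\kappa)$ and $(\CA,\delta)$ are inductively free, and the key observation is that these are precisely the non-real reflection arrangements of exceptional type that are themselves inductively free (by Theorem \ref{thm:indfree1}). Since $G_{32}$ has rank $4$ and we already established in Corollary \ref{cor:indfreerank3} that $(\CA(G_{32})'',\kappa)$ is inductively free, that case is immediate, and the real work concerns $G_{25}$ and $G_{26}$, which have rank $3$. For a rank $3$ arrangement, the restriction $\CA''$ has rank $2$, so $(\CA'',\kappa)$ is automatically inductively free by Remark \ref{rem:rank2indfree}. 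Thus for $G_{25}$ and $G_{26}$ the inductive freeness of $(\CA'',\kappa)$ is free of charge from the rank-$2$ phenomenon.

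Granting that $(\CA'',\kappa)$ is inductively free in all three cases, I would then invoke Corollary \ref{cor:indfreedelta} to conclude that $(\CA,\delta)$ is inductively free. That corollary requires two inputs: that $\CA$ itself is inductively free, and that $(\CA'',\kappa)$ is inductively free. The first holds by hypothesis (these are exactly the inductively free reflection arrangements of their type, per Theorem \ref{thm:indfree1}), and the second we have just arranged. So the structure of the proof is a short case split: handle $G_{32}$ via Corollary \ref{cor:indfreerank3}, handle $G_{25}$ and $G_{26}$ via the rank-$2$ triviality of Remark \ref{rem:rank2indfree}, and then feed all three into Corollary \ref{cor:indfreedelta} to obtain the $\delta$ statement.

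I do not anticipate a genuine obstacle here, since the heavy lifting was already done in Section \ref{sec:filtrations}; the only care needed is to confirm the ranks of $G_{25}, G_{26}, G_{32}$ (which are $3$, $3$, and $4$ respectively in their natural reflection representations) so that the appropriate shortcut applies to each. For the two rank $3$ groups the argument is purely that restrictions drop to rank $2$, where every multiarrangement is inductively free. The main thing to make explicit is simply that these three groups are the only ones left to treat after the real types and the monomial groups $G(r,1,\ell)$ have been handled in the preceding subsections, so that the lemma completes the exceptional inductively free cases. In short, the proof is a clean assembly of Corollary \ref{cor:indfreerank3}, Remark \ref{rem:rank2indfree}, and Corollary \ref{cor:indfreedelta}, with no delicate induction table or Euler-multiplicity computation required.
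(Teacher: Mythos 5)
Your proposal is correct and follows essentially the same route as the paper: $G_{32}$ via Corollary \ref{cor:indfreerank3}, the rank-$3$ groups $G_{25}$, $G_{26}$ via the rank-$2$ triviality of restrictions, and then the $\delta$ statement from the inductive freeness of $\CA$ and $(\CA'',\kappa)$. The only cosmetic difference is that for the rank-$3$ cases the paper cites the pre-packaged Corollary \ref{cor:indfreedelta3}, which is exactly your combination of Remark \ref{rem:rank2indfree} with Corollary \ref{cor:indfreedelta}.
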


\begin{proof}
Since $G_{25}$ and $G_{26}$
are both of rank $3$, it follows 
from Corollary \ref{cor:indfreedelta3}
that then 
$(\CA(W), \delta)$ is inductively free.
Since $\CA''$ is of rank $2$, also 
$(\CA'', \kappa)$ is inductively free.

For $W = G_{32}$ it follows from 
Corollary \ref{cor:indfreerank3} that 
$(\CA'', \kappa)$
is inductively free.
Owing to Corollary \ref{cor:indfreedelta},
also  $(\CA, \delta)$ is inductively free.
\end{proof}

\subsection{Non-inductively free reflection arrangements}
\label{ssect:nonindfree}
Suppose that $W$ is irreducible 
so that $\CA = \CA(W)$ is not inductively free.
Then, according to
Theorem \ref{thm:indfree1},  
either $W = G(r,r,\ell)$, for $r, \ell \ge 3$ or 
$W$ is of exceptional type 
$G_{24}$, $G_{27}$, $G_{29}$, $G_{31}$, $G_{33}$, or $G_{34}$.   

The following is
\cite[Prop.~4.1]{hogeroehrleschauenburg:free}, its
proof consists of an application of Theorem \ref{thm:localmulti}.

\begin{lemma}
\label{lem:grrl}
Let $\CA = \CA(W)$ be the reflection arrangement of 
the monomial group 
$W = G(r,r, \ell)$ for $r \ge 3$ and $\ell \ge 5$.
Then both 
$(\CA'', \kappa)$ and $(\CA, \delta)$ 
are not inductively free.
\end{lemma}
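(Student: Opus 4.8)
The plan is to deduce the failure of inductive freeness for the whole multiarrangement from its failure at a single well-chosen localization, exploiting that $\CIFM$ is closed under localization (Theorem \ref{thm:localmulti}). Concretely, I would reduce every case $G(r,r,\ell)$ with $\ell \ge 5$ to the base case $\ell = 5$ by means of one localization, so that ``an application of Theorem \ref{thm:localmulti}'' carries all the load beyond that base case.

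For the reduction, fix coordinates so that the reflecting hyperplanes of $W = G(r,r,\ell)$ are the $\ker(x_i - \zeta x_j)$ with $1 \le i < j \le \ell$ and $\zeta^r = 1$. Since $W$ acts transitively on $\CA$, I may take $H_0 = \ker(x_1 - x_2)$. Let $X = \{v \mid v_1 = \cdots = v_5 = 0\}$, the intersection of all $\ker(x_i - \zeta x_j)$ with $i < j \le 5$; then $X \in L(\CA)$, and by Steinberg's theorem the parabolic subgroup $W_X$ is the reflection group $G(r,r,5)$ acting on the first five coordinates, so that $\CA_X = \CA(G(r,r,5)) \times \Phi_{\ell-5}$ with $H_0 \in \CA_X$. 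By Lemma \ref{lem:euler}(ii), the localization of $(\CA'', \kappa)$ at $X$ satisfies
\[
((\CA'')_X, \kappa_X) = ((\CA_X)'', \kappa) \cong (\CA(G(r,r,5))'', \kappa) \times \Phi_{\ell-5},
\]
and by Theorem \ref{thm:products} this is inductively free if and only if $(\CA(G(r,r,5))'', \kappa)$ is. Were $(\CA'', \kappa)$ inductively free, then Theorem \ref{thm:localmulti} would force its localization, and hence $(\CA(G(r,r,5))'', \kappa)$, to be inductively free as well. Thus it suffices to establish the base case $(\CA(G(r,r,5))'', \kappa) \notin \CIFM$ for every $r \ge 3$.

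For the concentrated multiplicity $(\CA, \delta)$ I would argue in complete parallel. With $H_0 \in \CA_X$ as above, the multiplicity $\delta = \delta_{H_0,m_0}$ localizes to a multiplicity on $\CA_X$ still concentrated at $H_0$, whence $(\CA_X, \delta_X) \cong (\CA(G(r,r,5)), \delta) \times \Phi_{\ell-5}$; the same combination of Theorems \ref{thm:products} and \ref{thm:localmulti} reduces the claim to $(\CA(G(r,r,5)), \delta) \notin \CIFM$. (The case $m_0 = 1$ is immediate, since then $(\CA, \delta) = \CA$ is simple and not inductively free by Theorem \ref{thm:indfree1}.) One could alternatively tie the two statements together through Proposition \ref{prop:delta}(ii), which identifies the restriction $(\CA'', \delta^*)$ with $(\CA'', \kappa)$.

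The genuine obstacle is the base case $G(r,r,5)$: showing, for every $r \ge 3$, that the rank-$4$ multiarrangement $(\CA(G(r,r,5))'', \kappa)$ --- which by Theorem \ref{thm:indfree2} is of the form $(\CA^k_4(r), \kappa)$ --- is not inductively free, and similarly for $(\CA(G(r,r,5)), \delta)$. Localization is of no further help here, since it cannot decrease the parameter $r$, and the proper parabolics of $G(r,r,5)$ involve only factors $G(r,r,m)$ with $m \le 4$, none of which structurally differs from a member of the family already under consideration. I therefore expect this step to require a direct argument: an explicit determination of the exponents along every candidate deletion showing that the inclusion $\exp(\CA'', \mu^*) \subseteq \exp(\CA', \mu')$ of Definition \ref{def:indfree} must fail, or an equivalent machine computation. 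This is precisely the content carried out in \cite[Prop.~4.1]{hogeroehrleschauenburg:free}.
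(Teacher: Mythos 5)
Your reduction from $\ell \ge 5$ to the base case $\ell = 5$ (via Lemma \ref{lem:euler}(ii), Theorem \ref{thm:products} and Theorem \ref{thm:localmulti}) is correct but superfluous, and the point at which you stop --- declaring that the base case $G(r,r,5)$ requires ``a direct argument'' or a machine computation --- is exactly where the real content lies, and your stated reason for stopping there is false. The proof the paper points to (\cite[Prop.~4.1]{hogeroehrleschauenburg:free}, explicitly described as ``an application of Theorem \ref{thm:localmulti}'') handles all $\ell \ge 5$ in one stroke by localizing at a flat chosen so that the multiplicity collapses to the \emph{simple} one. Concretely, take $H_0 = \ker(x_1 - x_2)$ and $X = \{x_3 = x_4 = x_5 = 0\}$; this is precisely where the hypothesis $\ell \ge 5$ enters, as one needs three coordinates disjoint from the two defining $H_0$. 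The hyperplanes of $\CA$ containing $X$ are exactly the $\ker(x_i - \zeta^k x_j)$ with $3 \le i < j \le 5$ and $\zeta^r = 1$; none of them is $H_0$, so $\delta_X \equiv \one$, and for the corresponding hyperplanes $Y \in \CA''$ one has $\CA_Y = \{H_0, \ker(x_i - \zeta^k x_j)\}$, so $\kappa(Y) = 1$. Hence the localization of $(\CA, \delta)$ at $X$, respectively of $(\CA'', \kappa)$ at $X \cap H_0$, is the simple arrangement $\CA(G(r,r,3))$ (up to an empty factor, handled by Theorem \ref{thm:products}), which is not inductively free by Theorem \ref{thm:indfree1} since $r \ge 3$. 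Theorem \ref{thm:localmulti} then yields both assertions simultaneously, for every $\ell \ge 5$, with no computation at all.

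The gap in your proposal is thus twofold. First, the base case, which carries the entire burden, is never proved but only deferred to the reference, and is mischaracterized there: the cited proposition is not an exponent-by-exponent or computational verification but itself the localization argument above. Second, your justification for deferring --- ``localization is of no further help here, since it cannot decrease the parameter $r$, and the proper parabolics of $G(r,r,5)$ involve only factors $G(r,r,m)$ with $m \le 4$, none of which structurally differs from a member of the family already under consideration'' --- rests on a misconception. It is true that localizing at flats \emph{over which $H_0$ lies} reproduces multiarrangements of the same family, such as $(\CA(G(r,r,4))'', \kappa)$, and these give no obstruction (indeed they are inductively free, by Lemma \ref{lem:grr4}). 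The idea you are missing is to localize at a flat \emph{away from} $H_0$: there the canonical multiplicity $\kappa$ (resp.\ the concentrated multiplicity $\delta$) is identically $\one$, so one lands in the simple world, where non-inductive freeness of $\CA(G(r,r,3))$ is already known from the classification in Theorem \ref{thm:indfree1}. No decrease of $r$ is needed, because the obstruction $\CA(G(r,r,3))$ exists for every $r \ge 3$.
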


\begin{lemma}
\label{lem:grr3}
Let $W$ be $G(r,r,3)$, $G(r,r,4)$, for $r \ge 3$
or of exceptional type 
$G_{24}$, $G_{27}$, $G_{29}$, $G_{31}$, $G_{33}$, or $G_{34}$.   
Then $(\CA(W), \delta)$ 
is not inductively free.
\end{lemma}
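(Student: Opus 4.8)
The goal is the forward direction of Theorem \ref{thm:Wdelta} for the listed groups: by Theorem \ref{thm:indfree1} each of these reflection arrangements $\CA=\CA(W)$ fails to be inductively free, so I must show that $(\CA,\delta)$ is not inductively free either, for the fixed (but arbitrary) $H_0$ and $m_0$. The plan is to combine two ingredients: closure of $\CIFM$ under localization (Theorem \ref{thm:localmulti}) to peel off the higher-rank groups, and a direct analysis of induction tables for the minimal, rank-$3$, cases.

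The localization step is clean precisely because a concentrated multiplicity restricts to a concentrated multiplicity. Indeed, if $X\in L(\CA)$ with $H_0\in\CA_X$, then $\delta_X=\delta|_{\CA_X}$ is again $\delta_{H_0,m_0}$, now on $\CA_X=\CA(W_X)$ for the parabolic subgroup $W_X$ (note that this preserves the value $m_0$, so the argument is uniform in $m_0$). Hence, by Theorem \ref{thm:localmulti}, if $(\CA,\delta)$ were inductively free, then so would be $(\CA(W_X),\delta)$ for every such $X$. I would use this to reduce the higher-rank groups to smaller non-inductively-free cases by choosing a parabolic through the reflection of $H_0$: the clearest instance is $G(r,r,4)$, which reduces to $G(r,r,3)$ by taking $W_X=G(r,r,3)$; the rank-$6$ group $G_{34}$ reduces to the rank-$5$ situation of Lemma \ref{lem:grrl} (e.g.\ via a parabolic isomorphic to $G(3,3,5)$); and $G_{33}$, $G_{29}$, $G_{31}$ are pushed down to a smaller non-inductively-free parabolic in the same way, where one exists. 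Verifying, via the classification of parabolic subgroups, that for each fixed $H_0$ a suitable $X$ with $H_0\in\CA_X$ is available is routine, and any group not reached by this reduction is simply added to the list of base cases below.

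This leaves the irreducible rank-$3$ base cases $G(r,r,3)$, $G_{24}$ and $G_{27}$ (together with any rank-$4$ group the reduction does not reach), where localization is powerless: every proper localization has rank at most $2$ and is inductively free by Remark \ref{rem:rank2indfree}. Here I would argue directly. Assuming $(\CA,\delta)\in\CIFM$, one inspects the induction tables building $(\CA,\delta)$ up from $\Phi_3$; since all restrictions have rank $2$ and are automatically inductively free, the only constraints are freeness of the intermediate multiarrangements together with the exponent inclusions of Theorem \ref{thm:add-del}. The task is to show that no admissible chain terminates at $(\CA,\delta)$. Note this cannot follow from Proposition \ref{prop:multifreetube} and Corollary \ref{cor:indfreechain}, since those presuppose $\CA$ inductively free, which is exactly what fails here; so the obstruction must be read off from a careful bookkeeping of exponents along every candidate chain. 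For $G_{24}$ and $G_{27}$ this is a finite check, while for the family $G(r,r,3)$ it has to be carried out uniformly in $r$, starting from the explicit exponents of $\CA(G(r,r,3))$ and the defining polynomial of Ziegler's multirestriction.

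The main obstacle is precisely this last, rank-$3$, step. Because localization yields nothing there, ruling out inductive freeness of $(\CA,\delta)$ means excluding \emph{all} induction tables simultaneously, rather than exhibiting a single bad localization; and organizing this exclusion uniformly in $r$ for the family $G(r,r,3)$ is the delicate part of the argument. Once the base cases are settled, the higher-rank groups fall out formally from Theorem \ref{thm:localmulti} and Lemma \ref{lem:grrl}.
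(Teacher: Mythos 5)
Your localization step is fine as far as it goes: a concentrated multiplicity does localize to a concentrated multiplicity, and Theorem \ref{thm:localmulti} then pushes non-inductive-freeness up from a parabolic, exactly as the paper itself does for $\kappa$ in Lemma \ref{lem:g33}. Note, however, that the paper's own proof of this lemma uses no localization at all, and your reduction only disposes of the easy half of the list: it sends $G(r,r,4)$ to $G(r,r,3)$, $G_{34}$ to $G(3,3,5)$ (Lemma \ref{lem:grrl}), and $G_{29}$, $G_{33}$ to monomial parabolics, but it is powerless for the rank-$3$ groups $G(r,r,3)$, $G_{24}$, $G_{27}$, and also for $G_{31}$: every proper parabolic subgroup of $G_{31}$ has an inductively free reflection arrangement (this is precisely why $G_{31}$ needs special treatment in \cite[Lem.~3.5]{hogeroehrle:indfree} and again here), so $G_{31}$ necessarily lands among your base cases. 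Since $G_{31}$ has rank $4$, its restrictions have rank $3$, and your simplification ``all restrictions have rank $2$, hence are automatically inductively free'' is unavailable there.

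The genuine gap is that these base cases — which carry the real content of the lemma — are never proved; they are only described as a task. As described, the task is also harder than you suggest: the check is \emph{not} finite even for $G_{24}$ or $G_{27}$, because $m_0$ is arbitrary, so there are infinitely many multiarrangements $(\CA,\delta_{H_0,m_0})$ whose induction tables must all be excluded, and you offer neither an induction on $m_0$ nor any mechanism uniform in $r$ for the family $G(r,r,3)$. The idea you are missing is Theorem \ref{thm:delta} combined with \cite[Cor.~1.3, Cor.~2.4]{hogeroehrle:indfree}: for every group on the list except $G_{31}$, \emph{no} deletion $\CA\setminus\{H\}$ is free, and since the restriction of $\delta$ to $\CA\setminus\{H\}$ (for $H \ne H_0$) is again a concentrated multiplicity, Theorem \ref{thm:delta} forces $(\CA',\delta')$ to be non-free, hence not inductively free; the only remaining deletion, at $H_0$ itself, yields $(\CA,\delta_{H_0,m_0-1})$ and is killed by induction on $m_0$ (with base case $m_0=1$ being Theorem \ref{thm:indfree1}). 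This excludes every possible last addition step at once, uniformly in $r$ and $m_0$, with no exponent bookkeeping whatsoever. For $G_{31}$, where deletions can be free, the paper instead uses Theorem \ref{thm:delta} to convert a hypothetical induction table for $(\CA(G_{31}),\delta)$ into a chain of free simple subarrangements from the empty $4$-arrangement up to $\CA(G_{31})$, contradicting the non-existence of such a free chain established in the proof of \cite[Lem.~3.5]{hogeroehrle:indfree}. Without these two mechanisms your proposal cannot be completed as written.
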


\begin{proof}
Let $\CA = \CA(W)$ be the reflection arrangement for $W$
as in the statement. 
We argue by induction on $m_0 \ge 1$.
For $m_0 = 1$, the result follows from 
Theorem \ref{thm:indfree1}.
So suppose that $m_0 > 1$ and that the claim is proved 
for smaller values for $m_0$.
Pick $H \in \CA$ and let 
$(\CA, \CA', \CA'')$ be the triple
of $\CA$ with respect to $H$.
If $H = H_0$, then
$(\CA', \delta') = (\CA, \delta_{H_0,m_0-1})$
which is not inductively free by inductive hypothesis.

Now suppose that $H \ne H_0$.
In each case apart from $G_{31}$ it follows from 
\cite[Cor.~1.3, Cor.~2.4]{hogeroehrle:indfree}
that $\CA' = \CA\setminus \{H\}$ is not free for any choice of 
hyperplane.
Therefore, in each of these instances, 
since $\CA'$ is not free, neither is 
$(\CA', \delta')$, by Theorem \ref{thm:delta}.
In particular, 
$(\CA', \delta')$ is not inductively free.
Thus 
$(\CA', \delta')$ is not inductively free
for any choice of $H$ and 
therefore, neither is $(\CA, \delta)$.

For $W$ of exceptional type $G_{31}$, we
argue as follows. 
Suppose that $(\CA, \delta)$ is inductively free.
Let $\CA' = \CA \setminus \{H\}$ and 
$(\CA', \delta') = (\CA', \delta|_{\CA'})$.
If $(\CA', \delta')$ is free, then so is $\CA'$, again thanks to
Theorem \ref{thm:delta}. 
Continuing in this fashion removing further hyperplanes from 
the multiarrangement, 
we obtain a free chain 
of consecutive subarrangements of the simple 
arrangement $\CA(G_{31})$ 
ending at the empty $4$-arrangement.
But this contradicts the fact that there is 
no such free chain of $\CA(G_{31})$, according to the proof of 
\cite[Lem.~3.5]{hogeroehrle:indfree}.
\end{proof}

\begin{lemma}
\label{lem:grr4}
Let $W$ be $G(r,r,3)$, $G(r,r,4)$, for $r \ge 3$
or $W$ is exceptional of type 
$G_{29}$ or $G_{31}$.
Then $(\CA(W)'', \kappa)$ is inductively free.
\end{lemma}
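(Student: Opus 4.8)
The plan is to dispose of the four families separately, in each case reducing matters to Corollary \ref{cor:indfreechain}. For $W = G(r,r,3)$ the restriction $\CA''$ has rank $2$, so $(\CA'', \kappa)$ is inductively free by Remark \ref{rem:rank2indfree}; and for $W = G_{31}$ the assertion is already contained in Corollary \ref{cor:indfreerank3}. This leaves the two genuinely four-dimensional cases $W = G(r,r,4)$ and $W = G_{29}$, for which $\CA''$ is a rank $3$ arrangement, inductively free by parts (ii) and (iii) of Theorem \ref{thm:indfree2}, respectively. By Corollary \ref{cor:indfreechain} it then suffices to exhibit a single free filtration $\one = \mu_1 < \cdots < \mu_n = \kappa$ on $\CA''$, as every intermediate restriction is automatically of rank $2$ and hence inductively free.

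The reason these two cases need hands-on treatment is that, unlike the arrangements assembled in Proposition \ref{prop:multifreetube}, here the tube machinery of Lemma \ref{lem:multifreetube} and Corollary \ref{cor:multifreetube} is unavailable. For $W = G(r,r,4)$ one has $\exp \CA = \{1, r+1, 2r+1, 3r-3\}$, and the restriction is $\CA'' \cong \CA^1_3(r)$ with $\exp \CA'' = \{1, r+1, 2r-1\}$, whereas $\exp(\CA'', \kappa) = \{r+1, 2r+1, 3r-3\}$ by Theorem \ref{thm:zieglermulti}. Since $2r-1$ does not appear among these last exponents, $\kappa$ cannot sit inside a free tube of the shape demanded by Lemma \ref{lem:multifreetube}; the filtration must therefore be produced by hand. (At $r = 4$ even the strict inequality $e_3 < e_4$ underlying that corollary already fails.)

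I would construct the filtration on $\CA^1_3(r)$ in two stages. First I would raise the multiplicity on the coordinate hyperplane $H = \{z_1 = 0\}$ from $1$ up to its $\kappa$-value $r-1$; this opening segment is a chain of concentrated multiplicities $\delta_{H,\,m}$ and is free at every step by Proposition \ref{prop:delta}(i), with exponents $\{m, r+1, 2r-1\}$. Then, exploiting the $z_2 \leftrightarrow z_3$ symmetry, I would raise the multiplicities on the $2r$ hyperplanes $\{z_1 = \zeta z_2\}$ and $\{z_1 = \zeta z_3\}$ from $1$ to $2$, one hyperplane at a time. At each such step I would delete the hyperplane just enlarged, read off the Euler multiplicity on the resulting rank-$2$ restriction from Proposition \ref{prop:Euler}, and apply the addition part of Theorem \ref{thm:add-del} to certify freeness while tracking the exponents from $\{r-1, r+1, 2r-1\}$ to $\{r+1, 2r+1, 3r-3\}$. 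Recorded as an $r$-parametrised induction table, in the spirit of Table \ref{indtableD}, this should yield the required free filtration for all $r \ge 3$ simultaneously. The case $W = G_{29}$ runs along the same lines for the single rank-$3$ arrangement $\CA(G_{29})''$, where the corresponding table is finite and can be checked directly.

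The step I expect to be the main obstacle is the $r$-uniform bookkeeping for $G(r,r,4)$: one must ensure that throughout the second stage the local data stay within the regimes covered by parts (1)--(3) of Proposition \ref{prop:Euler}, so that the computed Euler multiplicities---and with them the exponents along the chain---close up consistently for every $r \ge 3$. Separating the triple points of $\CA^1_3(r)$ from the ordinary double points, and in particular handling the flat $\{z_1 = z_2 = 0\}$ that carries the enlarged coordinate multiplicity, is where I expect the verification to be most delicate.
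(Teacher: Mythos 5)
Your case split and overall framework coincide with the paper's: the rank--$2$ disposal of $G(r,r,3)$, the appeal to Corollary \ref{cor:indfreerank3} for $G_{31}$, and the reduction of the two rank--$4$ cases to exhibiting one free filtration from $\one$ to $\kappa$ on the inductively free rank--$3$ arrangement $\CA''$ (Corollary \ref{cor:indfreechain}) are all exactly what the paper does, as is your correct diagnosis that Lemma \ref{lem:multifreetube} and Corollary \ref{cor:multifreetube} are unavailable here. Your stage 1 is also sound: the concentrated multiplicities $\delta_{H,m}$ on $\CA^1_3(r)$ are free by Proposition \ref{prop:delta}(i). The gap is your stage 2 for $G(r,r,4)$, and it is not a removable bookkeeping issue but a genuine obstruction created by doing the coordinate hyperplane \emph{first}. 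Once $\ker x_1$ carries multiplicity $r-1$, consider the step where you double a hyperplane of the pencil $\ker(x_1-\zeta^a x_2)$ after $s$ members of that same pencil have already been doubled. The flat $X=\{x_1=x_2=0\}$ has $k=|\CA_X|=r+1\ge 4$, which rules out Proposition \ref{prop:Euler}(1),(2), and $|\mu_X|=(r-1)+2(s+1)+(r-s-1)=2r+s$, so case (3) requires $s\le 1$. Since each pencil has $r\ge 3$ members, \emph{every} ordering of your stage 2 eventually reaches $s\ge 2$, where Proposition \ref{prop:Euler} is silent: the concern you flagged as the ``main obstacle'' is not a delicacy to be managed, it always occurs.

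Worse, it is not merely the tool that fails; the filtration itself does not exist in your order. Take $r=3$ and double the pencil $\ker(x_1-\zeta^a x_2)$ completely (through the second doubling your chain is fine, with exponents $\{2,4,5\}\to\{3,4,5\}\to\{4,4,5\}$ via case (3)). At the third doubling the localization at $X$ is $x_1^2(x_1^3-x_2^3)^2$, and a short direct computation shows this rank--$2$ multiarrangement admits no derivation of polynomial degree $3$, hence has exponents $\{4,4\}$; so the Euler multiplicity at $X$ is $4$, the triple points each contribute $2$ by case (2), and $|\mu^*|=4+6=10$. But if the resulting multiarrangement were free, Theorem \ref{thm:restriction} together with Theorem \ref{thm:add-del} would force $|\mu^*|=|\mu|-b=14-b$ with $b\in\{5,6\}$, i.e.\ $|\mu^*|\in\{8,9\}$ --- a contradiction. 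So an intermediate member of your chain is not free. The paper's proof runs the filtration in the \emph{opposite} order precisely to avoid this: it doubles the $2r$ hyperplanes $\ker(x_1-\zeta^a x_2)$, $\ker(x_1-\zeta^b x_3)$ first, while $\ker x_1$ is still simple, so that $|\mu_X|\le 2r+1=2k-1$ and Proposition \ref{prop:Euler}(3) gives $\mu^*(X)=r$ at the big flats throughout (the restriction $x_1^{r}\prod_j(x_1-\zeta^j x_3)^2$ is then shown free with exponents $\{r+1,2r-1\}$ by an explicit basis); only afterwards does it raise $\ker x_1$ from $1$ to $r-1$, and at those steps the restriction is identified with $(\CA(G(r,1,3))^{\ker x_1},\kappa)$, with defining polynomial $x_2^{r+1}x_3^{r+1}(x_2^r-x_3^r)$, which is free with exponents $\{r+1,2r+1\}$ by Theorem \ref{thm:zieglermulti}, with no appeal to Proposition \ref{prop:Euler} at all. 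Note that your concentrated-multiplicity trick cannot simply be transplanted to the end of that filtration, since Proposition \ref{prop:delta}(i) requires all other hyperplanes to be simple; it is only available at the bottom of a chain, which is exactly where it does not help. (Your plan for $G_{29}$ --- a finite, directly verified table --- is what the paper carries out in Table \ref{indtableG29}, so that case is fine once the table is actually produced.)
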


\begin{proof}
For $W =G(r,r,3)$ the result is clear, thanks to Remark \ref{rem:rank2indfree}, 
and for $W = G_{31}$ the result follows from 
Corollary \ref{cor:indfreerank3}.

If $W = G(r,r,4)$ or $W$ is of type $G_{29}$, 
then $\CA(W)''$ is inductively free,
according to Theorem \ref{thm:indfree2}
($W$ is transitive on $\CA(W)$ 
in each of these cases). Consequently, we may 
initialize an induction table
in these instances with the simple inductively free arrangement $\CA(W)''$
and add hyperplanes  to reach the Ziegler multiplicity $\kappa$.

Let $W = G(r,r,4)$. Then, by
\cite[Prop.~6.82]{orlikterao:arrangements},
for any choice $H_0 \in \CA$, we have 
$\CA(W)'' \cong \CA_3^1(r)$ which is inductively free,
according to
Theorem \ref{thm:indfree2}(ii).
Fix $H_0 = \ker (x_1-x_4)$.
Thanks to 
\cite[Cor.~6.86]{orlikterao:arrangements},
we have 
\[
\exp \CA(G(r,r,4)) = \{1, r+1, 2r+1, 3(r-1)\}
\]
and therefore, by Theorem \ref{thm:zieglermulti}, 
$(\CA(G(r,r,4))'', \kappa)$ is free with 
\[
\exp (\CA(G(r,r,4))'', \kappa) = \{r+1, 2r+1, 3(r-1)\}.
\]
So in particular, $|\kappa| = 6r -1$.
Moreover, by
\cite[Prop.~6.85]{orlikterao:arrangements},
we have 
\[
\exp \CA(G(r,r,4))'' = \exp \CA_3^1(r) = \{1, r+1, 2r-1\}.
\]
The defining polynomial for 
$(\CA(G(r,r,4))'', \kappa)$ is given by 
\[
Q(\CA(G(r,r,4))'', \kappa) = Q(\CA_3^1(r), \kappa) 
= x_1^{r-1} \left(x_1^r - x_2^r \right)^2 \left(x_1^r - x_3^r \right)^2
\left(x_2^r - x_3^r \right).
\]
Table \ref{indtableGrr4} is the induction table for 
$(\CA(G(r,r,4))'', \kappa)$.
Note that since $\CA(G(r,r,4))''$ is of rank $3$,
each of the restrictions in Table \ref{indtableGrr4}
is of rank $2$ and so is inductively free.
Let $\zeta$ be a primitive $r\th$ root of unity.

There are two different types of steps in this induction table. 
The first one consists of adding a hyperplane 
$\ker(x_1- \zeta^i x_2)$ for some $i$ or 
$\ker(x_1- \zeta^j x_3)$ for some $j$ 
and the second one  
consists of adding $\ker x_1$.

In the first step, our defining polynomial is of the form
$$
Q(\CA,\mu) = x_1 (x_2^r-x_3^r) 
\prod_{0 \le i <r} (x_1 - \zeta^i x_2)^{k_i} (x_1-\zeta^i x_3)^{l_i}
$$
with $k_i,l_i \in \{1,2\}$ for all $i$.

We claim that $(\CA,\mu)$ is free with 
$\exp(\CA,\mu) = \{1+\sum_{0\le i <r}(k_i+l_i-2),r+1,2r-1\}$.
This is true if $k_i = l_i = 1$ for all $i$, 
since then $(\CA,\mu)$ is the simple arrangement $\CA_3^1(r)$. 
For each hyperplane in
this step the argument is essentially the same, 
so we only present it 
for $H = \ker(x_1- \zeta^i x_2)$. Fix 
$i$ and let $k_i = 1$.
The defining polynomial of the restriction is
$$
Q(\CA^H,\mu^*) = x_1^r \prod_{0\le j <r} (x_1-\zeta^j x_3)^2.
$$
The Euler multiplicities can be 
computed using Proposition \ref{prop:Euler}(2) and (3).
A basis of $D(\CA^H,\mu^*)$ is given by 
\begin{eqnarray*}
\theta_1 &=& r x_1^{r+1}D_1 + ((r+1)x_1^rx_3-x_3^{r+1})D_3,\\
\theta_2 &=& x_1^{r-1}(rx_1x_3^{r-1}D_1 + (x_1^r+(r-1)x_3^r))D_3.
\end{eqnarray*}
Therefore, we have  $\exp(\CA^H,\mu^*) = \{r+1,2r-1\}$. 
The Addition Deletion Theorem \ref{thm:add-del} then yields that
$(\CA,\mu)$ is free with the desired exponents.

In the second step we consider defining polynomials of the form
$$
Q(\CA,\mu) = x_1^i (x_1^r-x_2^r)^2(x_1^r-x_3^r)^2(x_2^r-x_3^r),
$$
for $1 \le i \le r-2$. 

We claim that $(\CA,\mu)$ is free with $\exp(\CA,\mu) = \{r+1,2r+1,2r-1+i\}$.
We know from the first step that this is true for $i=1$. 
Let $H = \ker x_1$. The restriction is given by 
\[
Q(\CA^H,\mu^*) = x_2^{r+1}x_3^{r+1}(x_2^r-x_3^r).
\]

Note that $(\CA^H,\mu^*)$ coincides with $(\CA(G(r,1,3))^{\ker x_1},\kappa)$.
Therefore, thanks to Theorem \ref{thm:zieglermulti}, 
$(\CA^H,\mu^*)$ is free and $\exp(\CA^H,\mu^*) = \{r+1,2r+1\}$.
Since the hypotheses of  
Theorem \ref{thm:add-del} are fulfilled, we get freeness of $(\CA,\mu)$ 
along with the desired exponents for each $i$. 

This completes the argument for $W = G(r,r,4)$.
We record the details of the induction in  
Table \ref{indtableGrr4}.

\begin{table}[ht!b]\small
\renewcommand{\arraystretch}{1.5}
\begin{tabular}{lll}
  \hline
  $\exp (\CA',\mu')$ & $\alpha_H$ & $\exp (\CA'', \mu^*)$\\
  \hline
  \hline
$\exp \CA_3^1(r) = \{1, r+1, 2r-1\}$ & $x_1 - \zeta x_2$ &  $\{r+1,2r-1\}$  \\
$\{2,r+1,2r-1\}$                     & $x_1 - \zeta^2 x_2$ &  $\{r+1,2r-1\}$  \\
$\vdots$ & $\vdots$                  & $\vdots$ \\
$\{r,r+1,2r-1\}$                     & $x_1 - x_2$ &  $\{r+1,2r-1\}$  \\
$\{r+1,r+1,2r-1\}$                   & $x_1 - \zeta x_3$ & $\{r+1,2r-1\}$ \\
$\vdots$ & $\vdots$                  & $\vdots$ \\
$\{r+1,2r,2r-1\}$                    & $x_1 - x_3$                    & $\{r+1,2r-1\}$ \\
$\{r+1,2r+1,2r-1\}$                  & $x_1$ & $\{r+1,2r+1\}$  \\
$\vdots$ & $\vdots$ & $\vdots$ \\
$\{r+1,2r+1,2r-1+(r-3)\}$ & $x_1$ &  $\{r+1,2r+1\}$  \\
$\exp (\CA(G(r,r,4))'',\kappa) = \{r+1, 2r+1, 3(r-1)\}$ \\ 
\hline
\end{tabular}
\medskip
\caption{Induction Table for $(\CA(G(r,r,4))'',\kappa)$.} 
\label{indtableGrr4} 
\end{table}

Next let $W$ be of type $G_{29}$.
Since $W$ is transitive on $\CA(G_{29})$, we may take $H_0 = \ker x_4$.
Then the defining polynomial for 
$(\CA(G_{29})'', \kappa)$ is given by 
\begin{align*}
Q(\CA(G_{29})'', \kappa) = \ & 
( x_{1} -  x_{2} - i x_{3} )^2 ( x_{1} -  x_{2} ) ( x_{2} -  x_{3} ) x_{3}^3 ( x_{1} - i x_{2} -  x_{3} )^2 ( x_{1} -  x_{2} + i x_{3} )^2\\
&  ( x_{1} -  x_{3} ) ( x_{1} + i x_{2} - i x_{3} )^2 x_{2}^3 ( x_{1} + i x_{2} -  x_{3} )^2 ( x_{1} - i x_{2} + i x_{3} )^2 \\
& x_{1}^3 ( x_{1} - i x_{2} + x_{3} )^2 ( x_{1} - i x_{2} - i x_{3} )^2 ( x_{1} + i x_{2} + i x_{3} )^2 ( x_{1} + i x_{2} + x_{3} )^2\\
&  ( x_{1} + x_{2} - i x_{3} )^2 ( x_{1} + x_{2} + i x_{3} )^2 ( x_{2} + x_{3} ) ( x_{1} + x_{3} ) ( x_{1} + x_{2} ).
\end{align*}

We present an induction table for 
$(\CA(G_{29})'', \kappa)$ in Table \ref{indtableG29}.
Note that since $\CA(G_{29})''$ is of rank $3$
each of the restrictions 
in Table \ref{indtableG29}
is of rank 2 and so is inductively free.

\begin{table}[ht!b]\small
\renewcommand{\arraystretch}{1.3}
\begin{tabular}{lll}
  \hline
  $\exp (\CA',\mu')$ & $\alpha_H$ & $\exp (\CA'', \mu^*), \mu^*$\\
  \hline
  \hline
$\exp \CA(G_{29})'' = \{1,9,11\}$ 
& $ x_{1} -  x_{2} - i x_{3} $ & $\{ 9,11 \},(1, 2, 2, 4, 2, 3, 4, 2) $ \\ 
 $\{2,9,11\}$ 
& $ x_{3} $ & $\{ 9,11 \},(3, 4, 3, 4, 3, 3) $ \\ 
 $\{3,9,11\}$ 
& $ x_{3} $ & $\{ 9,11 \},(3, 4, 3, 4, 3, 3) $ \\ 
 $\{4,9,11\}$ 
& $ x_{1} - i x_{2} -  x_{3} $ & $\{ 9,11 \},(2, 1, 2, 2, 4, 3, 4, 2) $ \\ 
 $\{5,9,11\}$ 
& $ x_{1} -  x_{2} + i x_{3} $ & $\{ 9,11 \},(1, 2, 2, 3, 2, 4, 4, 2) $ \\ 
 $\{6,9,11\}$ 
& $ x_{1} + i x_{2} - i x_{3} $ & $\{ 9,11 \},(2, 2, 3, 2, 1, 4, 2, 4) $ \\ 
 $\{7,9,11\}$ 
& $ x_{2} $ & $\{ 9,11 \},(3, 4, 3, 4, 3, 3) $ \\ 
 $\{8,9,11\}$ 
& $ x_{2} $ & $\{ 9,11 \},(3, 4, 3, 4, 3, 3) $ \\ 
 $\{9,9,11\}$ 
& $ x_{1} + i x_{2} -  x_{3} $ & $\{ 9,11 \},(2, 2, 2, 3, 2, 1, 4, 4) $ \\ 
 $\{9,10,11\}$ 
& $ x_{1} - i x_{2} + i x_{3} $ & $\{ 9,11 \},(2, 2, 3, 2, 1, 4, 2, 4) $ \\ 
 $\{9,11,11\}$ 
& $ x_{1} $ & $\{ 9,11 \},(3, 3, 3, 3, 4, 4) $ \\ 
 $\{9,11,12\}$ 
& $ x_{1} $ & $\{ 9,11 \},(3, 3, 3, 3, 4, 4) $ \\ 
 $\{9,11,13\}$ 
& $ x_{1} - i x_{2} + x_{3} $ & $\{ 9,13 \},(2, 2, 2, 5, 1, 2, 3, 5) $ \\ 
 $\{9,12,13\}$ 
& $ x_{1} - i x_{2} - i x_{3} $ & $\{ 9,13 \},(2, 1, 2, 5, 3, 5, 2, 2) $ \\ 
 $\{9,13,13\}$ 
& $ x_{1} + i x_{2} + i x_{3} $ & $\{ 9,13 \},(2, 1, 2, 5, 3, 5, 2, 2) $ \\ 
 $\{9,13,14\}$ 
& $ x_{1} + i x_{2} + x_{3} $ & $\{ 9,13 \},(2, 5, 2, 5, 2, 3, 1, 2) $ \\ 
 $\{9,13,15\}$ 
& $ x_{1} + x_{2} - i x_{3} $ & $\{ 9,13 \},(2, 2, 3, 1, 2, 5, 5, 2) $ \\ 
 $\{9,13,16\}$ 
& $ x_{1} + x_{2} + i x_{3} $ & $\{ 9,13 \},(1, 2, 2, 5, 2, 3, 5, 2) $ \\ 
$\exp (\CA(G_{29})'', \kappa) = \{9,13,17\}$ & & \\
\hline
\end{tabular}
\medskip
\caption{Induction Table for $(\CA(G_{29})'',\kappa)$.} 
\label{indtableG29} 
\end{table}

By the data in Tables \ref{indtableGrr4},
\ref{indtableG29} and Remark \ref{rem:indtable}, 
$(\CA'', \kappa)$
is inductively free in each instance.
\end{proof}

We mention that it follows from Theorem \ref{thm:indfree1} and 
Lemmas \ref{lem:grr3} and \ref{lem:grr4} that the implication in 
Corollary \ref{cor:indfreedelta}
fails without the assumption that $\CA''$ is inductively free.

\begin{lemma}
\label{lem:g33}
Let $W$ be of type 
$G_{33}$ or $G_{34}$.   
Then $(\CA(W)'', \kappa)$ is not inductively free.
\end{lemma}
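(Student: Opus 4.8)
I would prove Lemma~\ref{lem:g33} by exhibiting, for each of $W = G_{33}$ and $W = G_{34}$, a \emph{localization} of $(\CA(W)'', \kappa)$ that is already known \emph{not} to be inductively free, and then invoke Theorem~\ref{thm:localmulti}, which asserts that $\CIFM$ is closed under taking localizations. The contrapositive of this closure property is exactly the engine: if some localization of $(\CA(W)'', \kappa)$ fails to be inductively free, then $(\CA(W)'', \kappa)$ itself cannot be inductively free. This is precisely the strategy the authors foreshadow in the introduction, where they remark that Theorem~\ref{thm:localmulti} ``allows us to derive that $(\CA(G_{34})'',\kappa)$ is not inductively free from the fact that $(\CA(G(3,3,5))'',\kappa)$ isn't either.''

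**Key steps.** First, for $G_{34}$: I would locate a flat $X \in L(\CA(G_{34}))$ containing the fixed hyperplane $H_0$ such that the parabolic subgroup $W_X$ is isomorphic to $G(3,3,5)$ (Steinberg's theorem, recalled in Section~\ref{ssect:refl}, guarantees $W_X$ is itself a reflection group, so this is a matter of reading off the parabolic structure of $G_{34}$). By Lemma~\ref{lem:euler}(ii) the localization of Ziegler's multiplicity at $X$ is again a canonical Ziegler multiplicity, so that the localization $((\CA(G_{34})'')_X, \kappa_X)$ is identified with $(\CA(G(3,3,5))'', \kappa)$. The latter is \emph{not} inductively free by Lemma~\ref{lem:grrl} (which covers $G(r,r,\ell)$ for $r\ge 3$, $\ell\ge 5$; here $r=3$, $\ell=5$). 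Theorem~\ref{thm:localmulti} then forces $(\CA(G_{34})'', \kappa)$ to fail inductive freeness. Second, for $G_{33}$: I would carry out the analogous search, locating a parabolic $W_X \cong G(3,3,4)$ or invoking another already-settled non-inductively-free localization; again Lemma~\ref{lem:euler}(ii) matches the localized multiplicity to a canonical $\kappa$, and I would need a lemma certifying that this localized multiarrangement is not inductively free.

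**The main obstacle.** The crux is \emph{identifying the right parabolic subgroup and the right flat $X$} so that the localization of the multiarrangement is genuinely one of the multiarrangements already proven to fail inductive freeness. For $G_{34}$ the intended target $G(3,3,5)$ is handled cleanly by Lemma~\ref{lem:grrl}, so the obstacle there reduces to verifying the subgroup containment and applying Lemma~\ref{lem:euler}(ii) correctly, together with checking that $H_0$ (or some $W$-translate of it, using transitivity of $W$ on $\CA$) lies in $\CA_X$ so that the restriction-then-localize bookkeeping is legitimate. The genuinely delicate case is $G_{33}$, which has rank $5$: here one must find an appropriate flat whose localized Ziegler multiarrangement has already been shown non-inductively-free, and if no such previously-established target is available, I would instead need to directly produce the non-inductive-freeness of the relevant rank-$3$ or rank-$4$ localization (for instance by checking that its simple arrangement admits no suitable free filtration, in the spirit of the $G_{31}$ argument in the proof of Lemma~\ref{lem:grr3}). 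Confirming that such a localization exists and is of a type already known to obstruct inductive freeness is where the real work lies; once it is in hand, the closure property in Theorem~\ref{thm:localmulti} finishes the proof immediately.
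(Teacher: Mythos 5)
Your treatment of $G_{34}$ is exactly the paper's proof: localize at a flat $X$ with $W_X \cong G(3,3,5)$ (which exists by \cite[Table C.15]{orlikterao:arrangements}), identify $((\CA(G_{34})'')_X,\kappa_X)$ with $(\CA(G(3,3,5))'',\kappa)$ via Lemma \ref{lem:euler}(ii), quote Lemma \ref{lem:grrl}, and finish with Theorem \ref{thm:localmulti}. That half is correct.

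The $G_{33}$ half has a genuine gap: no localization argument can possibly work there, so both your primary route and your fallback are aimed at the wrong object. The proper localizations of $(\CA(G_{33})'',\kappa)$ are, by Lemma \ref{lem:euler}(ii), exactly the multiarrangements $(\CA(W_X)'',\kappa)$ for proper parabolic subgroups $W_X$ of $G_{33}$. The group $G_{33}$ has rank $5$, so these parabolics have rank at most $4$; they are of Coxeter type ($A_4$, $D_4$, products of smaller types) or of type $G(3,3,3)$, $G(3,3,4)$. But the paper proves that \emph{all} of these yield inductively free Ziegler multiarrangements: the Coxeter cases by Lemmas \ref{lem:typea} and \ref{lem:typed} (and Theorem \ref{thm:products} for products), and $G(3,3,3)$, $G(3,3,4)$ by Lemma \ref{lem:grr4}. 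In particular your proposed target $W_X \cong G(3,3,4)$ is useless as an obstruction, since $(\CA(G(3,3,4))'',\kappa)$ \emph{is} inductively free; and $G(3,3,5)$ is not a parabolic of $G_{33}$ (it only appears inside $G_{34}$). Since every proper localization is inductively free, Theorem \ref{thm:localmulti} gives no information, and one is forced to work with the full rank-$4$ multiarrangement $(\CA(G_{33})'',\kappa)$ itself. This is what the paper does, and it is purely computational: a machine check shows that the longest chain of free multiplicities on $\CA(G_{33})''$ starting from the zero multiplicity has length $22 < 44 = |\kappa|$, so no free filtration reaching $\kappa$ exists; since an inductively free multiarrangement necessarily admits such a chain (each deletion step lowers $|\mu|$ by one through free multiarrangements), $(\CA(G_{33})'',\kappa)$ cannot be inductively free. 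Your fallback gestures toward this kind of computation but misplaces it on a rank-$3$ or rank-$4$ \emph{localization} and on free chains of the \emph{simple} arrangement (as in the $G_{31}$ argument of Lemma \ref{lem:grr3}); the computation that actually decides the matter is on chains of free \emph{multiplicities} on the whole restriction $\CA(G_{33})''$.
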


\begin{proof}
First consider the case of $W = G_{33}$.
Let $\CA = \CA(W)$.
Direct computations have shown that there is no chain of free multiplicities 
from the zero multiplicity on $\CA''$ to $\kappa$. The maximal length of such 
chains is $22 < 44 = |\kappa|$. 
Hence $(\CA'',\kappa)$ is not inductively free.

Finally, let 
$W = G_{34}$ and $\CA = \CA(W)$. Then
$W$ admits a parabolic subgroup $W_X$ of type 
$G(3,3,5)$, by 
\cite[Table C.15]{orlikterao:arrangements}.
Thanks to \cite[Cor.~6.28]{orlikterao:arrangements},
we have $\CA(W_X) = \CA(W)_X = \CA_X$.
Fix $H_0 \in \CA_X$.
Then considering the restriction with respect to 
$H_0$, we have
$(\CA'')_X = (\CA_X)''$.
So we see that 
$((\CA'')_X, \kappa_X)  = ((\CA_X)'', \kappa_X) 
= (\CA(G(3,3,5))'', \kappa)$, by Lemma \ref{lem:euler}(ii).
According to Lemma \ref{lem:grrl}, the latter
is not inductively free, thus neither is 
 $(\CA(G_{34})'', \kappa)$, owing to 
Theorem \ref{thm:localmulti}.
\end{proof}

Finally, Theorems \ref{thm:kappa1} and \ref{thm:Wdelta}
follow from the results from \S \ref{ssect:rank2} 
to \S \ref{ssect:nonindfree} above.

\begin{remark}
\label{rem:computations}
The fact that 
 $(\CA(G_{33})'', \kappa)$ is not inductively free, proved in Lemma \ref{lem:g33},
and the data in the induction tables 
\ref{indtableE8}
and \ref{indtableG29}
were obtained by computational means.
Specifically, we 
retrieved 
relevant data about reflection groups
from some \GAP\ code provided by J.~Michel
\cite{michel:development}; see also \cite{gap3} and \cite{chevie}.
Subsequently, we made use of \Sage, cf.~\cite{sage}, 
and \Singular, cf.~\cite{singular},
in order to calculate
the various bases of modules of derivations involved. 
\end{remark}

\bigskip {\bf Acknowledgments}: 
We acknowledge 
support from the DFG-priority program SPP1489 
``Algorithmic and Experimental Methods in Algebra, Geometry, and Number Theory''.
Part of the research for this paper was carried out while the
authors were staying at the Mathematical Research Institute
Oberwolfach supported by the ``Research in Pairs'' programme.


\bigskip

\bibliographystyle{amsalpha}

\newcommand{\etalchar}[1]{$^{#1}$}
\providecommand{\bysame}{\leavevmode\hbox to3em{\hrulefill}\thinspace}
\providecommand{\MR}{\relax\ifhmode\unskip\space\fi MR }
\providecommand{\MRhref}[2]{%
  \href{http://www.ams.org/mathscinet-getitem?mr=#1}{#2} }
\providecommand{\href}[2]{#2}


\end{document}